\newtheorem{theorem}{Theorem}[section]
\newtheorem{prop}[theorem]{Proposition}
\newtheorem{lemma}[theorem]{Lemma}
\newtheorem{coro}[theorem]{Corollary}
\newtheorem{definition}[theorem]{Definition}
\newtheorem{remark}[theorem]{Remark}
\newtheorem{conj}[theorem]{Conjecture}
\numberwithin{equation}{section}
\def\R{\mathbb{R}}
\def\N{\mathbb{N}}
\def\S{\mathbb{S}}
\def\e{\varepsilon}
\def\eps{\epsilon}
\def\n{\nu}
\def\G{\Gamma}
\def\D{\Delta}
\def\O{\Omega}
\def\a{\alpha}
\def\b{\beta}
\def\g{\gamma}
\def\d{\delta}
\def\s{\sigma}
\def\k{\kappa}
\def\l{\lambda}
\def\o{\omega}
\def\t{\tau}
\def\H{\mathcal{H}}
\def\F{\mathcal{F}}
\def\n{\nabla}
\def\vol{\mathrm{vol}}
\newcommand{\chrc}[1]{\mathrm{1}_{#1}}
\renewcommand{\div}{\text{div}}
\def\de{\partial}
\def\Xint#1{\mathchoice
{\XXint\displaystyle\textstyle{#1}}%
{\XXint\textstyle\scriptstyle{#1}}%
{\XXint\scriptstyle\scriptscriptstyle{#1}}%
{\XXint\scriptscriptstyle\scriptscriptstyle{#1}}%
\!\int}
\def\XXint#1#2#3{{\setbox0=\hbox{$#1{#2#3}{\int}$ }
\vcenter{\hbox{$#2#3$ }}\kern-.6\wd0}}
\def\dashint{\Xint-}
\title[Nondegeneracy and stability]{Nondegeneracy and stability in the limit of a one-phase singular perturbation problem}
\author{Nikola Kamburov}
\address{Nikola Kamburov, Facultad de Matem\'aticas, Pontificia Universidad Cat\'olica de Chile, Avenida Vicu\~na Mackenna 4860, Santiago 7820436, Chile}
\email{nikamburov@mat.uc.cl}
\thanks{The author was partially supported by Proyecto FONDECYT Regular No.\ 1201087.}
\dedicatory{To my teacher David Jerison and to his math ``as the art of the possible"}
\begin{document}

\begin{abstract}
We study solutions to a one-phase singular perturbation problem that arises in combustion theory and that formally approximates the classical one-phase free boundary problem. We introduce a natural density condition on the transition layers themselves that guarantees that the key nondegeneracy growth property of solutions is satisfied and preserved in the limit. We then apply our result to the problem of classifying global stable solutions of the underlying semilinear problem and we show that those have flat level sets in dimensions $n\leq 4$, provided the density condition is fulfilled. The notion of stability that we use is the one with respect to \emph{inner domain deformations} and in the process, we derive succinct new formulas for the first and second inner variations of general functionals of the form $I(v) = \int |\nabla v|^2 + \mathcal{F}(v)$ that hold in a Riemannian manifold setting.
\end{abstract}

\keywords{singular perturbation problem, one-phase free boundary problem, nondegeneracy, second inner variation, stable solutions, rigidity}
\subjclass[2020]{35R35, 35B25, 35B35, 35B65, 35D30}

\bibliographystyle{alpha}
\maketitle

\section{Introduction}

The present paper aims to contribute to the understanding of the limit behaviour of \emph{nonnegative} critical points of the energy functional
\begin{equation}\label{eq:energy}
I_\e(v, \O):= \int_\O \left( |\n v|^2 + \F_\e(v) \right) dx, 
\end{equation}
in which $\O\subseteq\R^n$ is a domain and the potential $\F_\e(t)$ approximates the characteristic function 
$$\F_0(t):=\chrc{(0,\infty)}(t),$$ 
as $\e\downarrow 0$. Specifically, we will be interested in potentials $\F_\e$ of the form 
\begin{equation}
\F_\e(t):= \begin{cases} \int_0^t 2 f_\e(s)\, ds, & \text{for } t\geq 0, \\
0 & \text{for } t<0,
 \end{cases}
\end{equation}
where $f_\e(t):=\e^{-1} f(t/\e)$ for a given nonnegative function $f\in C^\infty_c([0,\infty))$, satisfying
\begin{align}
& f\geq 0, \quad  \text{supp }f=[0,T], \quad \int_0^T 2 f(s)\,ds = 1, \\
& c_0 s \leq f(s) \leq c_0^{-1} s \quad \text{when } s\in [0,\tau], \label{eq:condf}
\end{align}
for some constants $0<\tau<T<\infty$ and $c_0> 0$. Note that hypothesis \eqref{eq:condf} is simply a quantitative way of expressing $f'(0)>0$ (which can be relaxed -- see the discussion after Theorem \ref{thm:nondeg:main} below). 

For any $\e>0$, nonnegative critical points $u_\e\in H^1_\text{loc}(\O)$ of $I_\e$ solve the semilinear elliptic PDE
\begin{equation}\label{eq:SLe}
\left\{
\begin{aligned}
 u_\e & \geq 0 & \quad \mbox{in } \O, \\
   \Delta  u_\e &= f_\e(u_\e) & \quad \mbox{in } \O,
  \end{aligned}\right.
\end{equation}
in a weak sense. As $f_\e(u_\e) \in L^\infty(\O),$  the Harnack inequality implies that a solution $u_\e$ of \eqref{eq:SLe} must be locally bounded, while because of \eqref{eq:condf}, the strong maximum principle yields that either $u_\e>0$ a.e.\ or $u_\e \equiv 0$ in $\O$. Semilinear elliptic regularity theory then tells us that $u_\e$ is actually a smooth, classical solution of \eqref{eq:SLe}, that is either identically zero, or strictly positive. 

The functionals $I_\e(v, \O)$ formally converge as $\e \downarrow 0$ to the \emph{Alt-Caffarelli energy functional}
\begin{equation}\label{eq:OPfunc}
I_0(v, \O):=\int_\O \left(|\n v|^2 + \F_0(v)\right) dx, \qquad v:\O\to [0,\infty),
\end{equation}
whose associated Euler-Lagrange equations form the classical \emph{one-phase free boundary problem} (FBP)
  \begin{equation}\label{FBP}
\left\{
\begin{aligned}
 u\geq 0 & \quad \mbox{in } \O, \\
   \Delta  u=0 & \quad \mbox{in } \O_0^+(u):=\{x \in \O: u(x)>0\}, \\
   |\nabla u|=1 & \quad \mbox{on }  F_0(u):=\de \O_0^+(u) \cap \O,
  \end{aligned}\right.
  \end{equation}
in which the set $\O_0^+(u)$ is the \emph{positive phase} of $u$, its complement $Z_0(u):=\{x\in \O: u(x)=0\}$ is its \emph{zero phase}, while the abrupt interface $F_0(u)$, caused by the discontinuity of $\F_0$,  is known as the \emph{free boundary}.

The energy functional $I_\e$ appears in models of flame propagation (\cite{laminarflames}) and there has been substantial literature devoted to understanding the underlying \emph{singular perturbation problem} \eqref{eq:SLe} and its parabolic counterparts (we refer the reader to \cite{BereCafNiren1990, CafVasquez1995, CafLW97-1, Weiss2003singperturbation, LedWolanski2006, Karakhanyan2020} and references therein). Of particular interest has been exploring the sense in which critical points $u_\e$ of $I_\e$ and their transition layers $\{\theta \e \leq u_\e \leq T\e\}$, $\theta\in (0,T]$, converge to solutions $u$ of \eqref{FBP} and their free boundaries $F_0(u)$, respectively, and how regular the latter are. 

The case of nonnegative critical points $u_\e$ that locally mimimize the energy $I_\e$ was studied in detail in the book by Caffarelli and Salsa \cite{CafSalsa}. The analysis of the interface convergence as well as the preliminary, measure-theoretic regularity of the resulting free boundary rests on the two fundamental estimates of \emph{uniform Lipschitz continuity} (see Proposition \ref{prop:Lip} below) and \emph{uniform nondegeneracy}. The latter precisely states  (see \cite[Theorem 1.8]{CafSalsa} or \cite[Lemma 4.2]{AS}) that at a distance $r\geq \l \e$ away from points $x\in \O$, where $u_\e(x)\geq \theta \e$ for a fixed $\theta\in (0,T]$, the solution grows to be at least a multiple of $r$:
\begin{equation}\label{eq:nondeg0}
\sup_{B_r(x)} u_\e \geq c r, 
\end{equation}
for some constants $c, \l >0$. The nondegeneracy property underpins the local Hausdorff distance convergence of the superlevel sets $\{u_\e \geq \theta \e\}$ to the positive phase $\O^+_0(u)$ of the limit $u$. Being passed down to $u$,
\begin{equation}\label{eq:nondegFB0}
\sup_{B_r(x)} u \geq c r, \quad \text{for all } x\in \overline{\O_0^+(u)} \quad \text{and all } B_r(x)\subset \O,
\end{equation}
it is then instrumental in the blow-up analysis that explores the regularity of the free boundary $F_0(u)$ and the sense in which $u$ solves \eqref{FBP}. At this stage, there is another key basic estimate at play: the \emph{positive density} of the zero phase $Z_0(u)$, which states that 
\begin{equation}\label{eq:posdens0}
|Z_0(u)\cap B_r(x)| \geq \k |B_r| \quad \text{for all } x\in Z_0(u), \text{ and all } B_r(x)\subset \O,
\end{equation}  
for some constant $\k>0$. The positive density estimate \eqref{eq:posdens0} is essential in ruling out the possibility of a blowup limit $u_0$ of $u$ that is of \emph{wedge} type: $u_0(x)=s|x_n|$ for some $s>0$, which is a vestige of a singularity in $F_0(u)$. 

Whereas the uniform Lipschitz continuity continues to hold for solutions of \eqref{eq:SLe} that are not necessarily energy minimizing, the nondegeneracy property does not and neither is valid the positive zero-phase density in the limit. This is illustrated by the family of one-dimensional, wedge-like, solutions $\{V_{\e}^s(t)\}_{\e>0, s\in (0,1)}$, given by the unique solutions to the ODE problem:
\begin{equation}
(V_{\e}^s)'' = f_\e(V_{\e}^s) \quad \text{in } \R, \quad \text{with}\quad (V_{\e}^s)'(0) = 0 \quad \text{and} \quad \lim_{t\to\pm \infty} (V_{\e}^s)'(t) = \pm s.
\end{equation}
which blow down to the wedge $s|t|$ for slopes $s\in (0,1)$ (see \cite[Proposition 3.1]{LedWolanski2006CPDE} or \cite[Section 2.3]{FR-RO2019}). As $V_\e^s(0)\in (0, T\e)$, these solutions have a nontrivial interface region, and the 1D solutions of \eqref{eq:SLe} in $\R^n$, given by
$
u_\e(x)=V_\e^\e(x_n),
$
certainly fail the uniform nondegeneracy estimate \eqref{eq:nondeg0}, since they tend to $0$, as $\e \downarrow 0$.

The case of general critical points of $I_\e$ (nonnegative as well as sign-changing) was studied in depth in a series of papers by Lederman and Wolanski \cite{LedWolanski1998, LedWolanski2006CPDE, LedWolanski2006}. For the one-phase singular perturbation scenario, the authors showed that $u_\e$ converge locally uniformly to a limit $u$ which is harmonic in $\O_0^+(u)$ and which satisfies the free boundary gradient condition in viscosity sense (see Definition \ref{def:visco}) as well as pointwise  at regular points of $F_0(u)$, \emph{provided} the limit $u$ satisfies the \emph{nondegeneracy condition} \eqref{eq:nondeg0}. Assuming additionally the \emph{positive density} condition \eqref{eq:posdens0} on the zero phase of $u$, they then obtained that the free boundary $F_0(u)$ is a smooth hypersurface, except on a relatively closed subset of $(n-1)$ Hausdorff measure zero.

We would like to emphasize that in the cited results above, the additional hypotheses leading to a good regularity theory are made on the limit $u$, and \emph{not} on the critical points $u_\e$ of $I_\e$. The first objective of our paper is to identify a natural condition on the solutions $u_\e$ of \eqref{eq:SLe} themselves that guarantees that the limit $u$ will inherit both key properties \eqref{eq:nondegFB0}-\eqref{eq:posdens0}. We achieve it by introducing the notion of $\mathcal{D}(\k, L)$ \emph{density property} of the interface of $u_\e$. Denote by
\begin{equation}\label{eq:defZF}
Z_\e^\theta(u_\e):=\{x\in \O: u_\e(x)\leq \theta\e \} \quad \text{and} \quad F_\e^\theta(u_\e):= \{x\in \O: \theta\e\leq u_\e(x)\leq T\e\}
\end{equation}
the two parts of the transition region $\{u_\e \leq T\e\}$, divided by the level set $\{u_\e=\theta \e\}$, for $\theta \in (0, T]$. 
\begin{definition}\label{def:DKL}
We will say that (the interface of) $u_\e$ satisfies the density property $\mathcal{D}(\k, L)$ in $\O$ for some $\k \in (0,1]$ and $L>0$ if 
\begin{equation}\label{def:DKL:eq}
|Z_\e^{\tau/4}(u_\e)\cap B_{r/2}(x)| \geq \k |B_{r/2}| \quad \text{for all } x\in F_\e^\tau(u_\e) \text{ and all } r\geq L\e, \text{ such that } B_{r}(x)\subseteq \O.
\end{equation}
Here $\tau$ refers to the constant in \eqref{eq:condf}.
\end{definition}
The condition \eqref{def:DKL:eq} is a natural one that minimizers of $I_\e$, in particular, fulfill for universal positive constants $\k, L$ (see Proposition \ref{prop:minDKL}). It is not difficult to envision why the limit $u$ of solutions $u_\e$ of \eqref{eq:SLe}, which satisfy a $\mathcal{D}(\k,L)$ density property uniformly as $\e \downarrow 0$, will inherit the positive density \eqref{eq:posdens0} of the zero phase $Z_0(u)$. What is less obvious is that this property actually guarantees that the $u_\e$ satisfy the uniform nondegeneracy bound \eqref{eq:nondeg0}. This is the content of our first main result. 
\begin{theorem}\label{thm:nondeg:main}
Let $\k \in (0,1]$, $L>0$ and $\theta\in (0,\tau]$. There exist positive constants $c$ and $\e_0$, depending on $\k, L, n$ and $f$, and a constant $M>0$, depending on $\theta, n$ and $f$, such that if $\e\leq \e_0$ and $u_\e \in C^2(B_{2})$ is a solution of \eqref{eq:SLe} in $B_{2}$, for which
\begin{enumerate}
\item $u_\e(0) \leq T \e$, and
\item the interface of $u_\e$ satisfies the density property $\mathcal{D}(\k, L)$ in $B_{1}$,  
\end{enumerate}
then for all $p\in \{x\in B_{1/4}: u_\e(x)\geq \theta \e\}$ and all $r\geq 2\max(L, M)\e$ such that $B_r(p)\subset B_1$,
\begin{equation}\label{thm:nondeg:maineq}
\sup_{B_r(p)}u_\e \geq c r.
\end{equation}
\end{theorem}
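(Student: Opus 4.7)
I would argue by contradiction and compactness, supplemented by a microscopic reach-out lemma that furnishes the constant $M$ of the statement. Negate the conclusion to obtain sequences $\e_k\downarrow 0$, solutions $u_k$ on $B_2$ satisfying (1)--(2), points $p_k\in B_{1/4}$ with $u_k(p_k)\geq \theta\e_k$, and radii $r_k\geq 2\max(L,k)\e_k$ with $B_{r_k}(p_k)\subset B_1$, for which $\sup_{B_{r_k}(p_k)}u_k\leq r_k/k$. Setting $\tilde\e_k := \e_k/r_k\to 0$ and $v_k(y) := u_k(p_k+r_k y)/r_k$ on $B_1$, the $v_k$ solve the singular perturbation PDE at parameter $\tilde\e_k$, are uniformly Lipschitz (Proposition \ref{prop:Lip}), satisfy $v_k(0)\geq \theta\tilde\e_k$ and $\sup_{B_1}v_k\to 0$, and inherit $\mathcal{D}(\k,L)$ at the rescaled scale $L\tilde\e_k\to 0$.

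The reach-out lemma asserts: for some $M = M(\theta,n,f)$, any solution with $u_\e(p)\geq \theta\e$ has a point of $B_{M\e}(p)$ where $u_\e\geq T\e$. Its proof is a single integration. The divergence theorem together with the uniform Lipschitz bound gives $\int_{B_{M\e}(p)}f_\e(u_\e)\leq C(M\e)^{n-1}$, while if $u_\e$ were to remain in $[\theta\e/2,T\e]$ throughout $B_{M\e}(p)$ then $f_\e(u_\e)\geq f_0/\e$ (with $f_0 := \min_{[\theta/2,T]}f > 0$) would force the left-hand side to exceed $cM^n\e^{n-1}$, a contradiction once $M$ exceeds a threshold. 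The ``drop'' alternative, in which $u_\e$ falls below $\theta\e/2$ in the ball, is disposed of by using $\mathcal{D}(\k,L)$ to rule out $u_\e$ being trapped in the deep interface and to force a return to the positive phase nearby. Applied to $p_k$ in rescaled coordinates, this produces a point of $B_{M\tilde\e_k}(0)$ at which $v_k\geq T\tilde\e_k$, so $\sigma_k := \sup_{B_1}v_k\geq T\tilde\e_k$.

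I then introduce a secondary rescaling that captures the linear scale at which $v_k$ varies: let $\rho_k\in(0,1)$ be defined by $\sup_{B_{\rho_k}(0)}v_k = \rho_k$ (which exists by intermediate value since $v_k(0)>0$ while $\sigma_k\leq 1/k$), and set $\tilde v_k(z) := v_k(\rho_k z)/\rho_k$. Then $\tilde v_k$ solves the equation with new parameter $\tilde\e_k/\rho_k$, has $\sup_{B_1}\tilde v_k = 1$, and inherits the density condition at scale $L\tilde\e_k/\rho_k$. Split into two regimes. If $\rho_k/\tilde\e_k\to\infty$, the Lederman-Wolanski framework \cite{LedWolanski2006CPDE, LedWolanski2006} yields subsequential convergence of $\tilde v_k$ to a viscosity solution $\tilde v_\infty$ of the one-phase free boundary problem \eqref{FBP} on $B_1$, with $\sup_{B_1}\tilde v_\infty = 1$ and with zero phase of density at least $\k$ at every free-boundary point (inherited from the vanishing-scale density condition). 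The Alt-Caffarelli nondegeneracy then gives $\sup_{B_\rho(y)}\tilde v_\infty\geq c_0\rho$ at every free boundary point $y$ and small $\rho>0$; translating back through the two scalings contradicts $\sigma_k\to 0$. In the complementary regime $\rho_k/\tilde\e_k$ bounded, a microscopic blow-up $V_k(z) := v_k(\tilde\e_k z)/\tilde\e_k$ produces a bounded nonnegative solution $V_\infty$ of $\Delta V = f(V)$ on $\R^n$ with $V_\infty(0)\geq \theta$, which by Liouville (bounded subharmonic on $\R^n$ is constant) must equal a constant zero of $f$. That constant is either $0$ (ruled out by $V_\infty(0)\geq \theta>0$) or lies in $[T,\infty)$; the borderline $V_\infty\equiv T$ is excluded by the density condition (since then $Z^{\tau/4}(V_\infty) = \emptyset$ while $F^\tau(V_\infty) = \R^n$), and $V_\infty>T$ is excluded by tracing $v_k\approx V_\infty\tilde\e_k$ back through hypothesis (1), $u_k(0)\leq T\e_k$, via the uniform Lipschitz bound.

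The main obstacle is the convergence step in the first regime: the Lederman-Wolanski convergence to a bona fide viscosity solution of \eqref{FBP} normally requires both nondegeneracy and positive-density of the limit, so here one has to bootstrap both from the vanishing-scale density condition $\mathcal{D}(\k,L)$, using Alt-Caffarelli. A second, smaller point of care (when $\theta<\tau$) is that $p_k$ may fail to lie in $F^\tau(u_k)$, so a short preliminary argument (reach-out or Lipschitz) is required to relocate to a nearby $F^\tau$-point before applying the density condition.
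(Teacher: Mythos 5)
Your overall strategy (contradiction + compactness + rescaling) is genuinely different from the paper's, but it does not close the central gap — in fact it defers it rather than resolving it. In your regime $\rho_k/\tilde\e_k\to\infty$ you invoke ``Lederman--Wolanski convergence to a viscosity solution of the one-phase FBP'' followed by ``Alt--Caffarelli nondegeneracy.'' Both steps presuppose exactly what the theorem needs to establish: the Lederman--Wolanski passage to a genuine viscosity solution requires the nondegeneracy of the approximating sequence (or of the limit) as an input, and Alt--Caffarelli nondegeneracy is a property of \emph{minimizers} — it does not follow from positive density of the zero phase for arbitrary critical points (indeed the wedge solutions $V_\e^s$ are critical points with the wrong slope). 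You acknowledge this as ``the main obstacle'' but propose to ``bootstrap both from $\mathcal{D}(\k,L)$ using Alt--Caffarelli,'' which is circular without a new idea. The paper's entire content at this juncture is a direct, quantitative mechanism for converting the density hypothesis into nondegeneracy \emph{before} any limit is taken: apply a Poincar\'e--Sobolev inequality to $g=(\F_\e(u_\e)-\F_\e(\tau\e/4))^+$ (which vanishes on the density set), exploit the divergence structure $|\nabla\F_\e(u_\e)|\leq 2\Delta u_\e|\nabla u_\e|$ together with the Lipschitz bound, and run an iteration on $\sigma_r:=r^{-1}\sup_{B_r}u_\e$ showing $\sigma_r\lesssim (R-r)^{-n-1}\sigma_R^2$, which yields a contradiction if $\sigma_1$ is too small (Lemma \ref{thm:nondeg}). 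Your proposal contains no substitute for this step, so the first regime of your dichotomy is unsupported.

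Your ``reach-out lemma'' as stated is also incorrect: you claim $f_0:=\min_{[\theta/2,T]}f>0$, but $f\in C^\infty_c([0,\infty))$ with $\mathrm{supp}\,f=[0,T]$ forces $f(T)=0$, so $f_0=0$ and the integral lower bound fails. The paper's Lemma \ref{lem:nondeg0} is more modest and more careful: it uses only the lower bound $f(s)\geq c_0 s$ on $[0,\tau]$ and a weighted mean-value inequality for $-\Delta w+w=0$ to show that a $\theta\e$-point reaches the level $\tau\e$ (not $T\e$) within radius $M\e$, and the jump from $\tau\e$ to the positive phase is then handled separately by Lemma \ref{thm:nondeg}. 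Getting from $\tau\e$ directly to $T\e$ at a comparable scale is not a ``single integration'' and you would need a different argument there as well. Finally, for points $p\in B_{1/4}$ lying in the positive phase $\{u_\e>T\e\}$, the paper does not rescale by the radius $r$ at $p$; it instead proves a separate distance-nondegeneracy estimate $u_\e(p)\geq\mu_0\,d(p,F_\e^\tau)$ (Lemma \ref{prop:distnondeg}) by a compactness argument that crucially uses the already-established nondegeneracy from Lemma \ref{thm:nondeg} together with the positive density to rule out wedge-type blow-ups — your proposal does not separate this case, and in your secondary rescaling, $V_k(0)=v_k(0)/\tilde\e_k$ is not a priori bounded when $p_k$ sits in the positive phase.
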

Assumption (1) above is made to ensure that $u_\e$ satisfies the universal Lipschitz bound in $B_1$. The proof of Theorem \ref{thm:nondeg:main} is achieved in several stages over Section \ref{sec:nondeg}, in which the $\mathcal{D}(\k,L)$ density property hypothesis (2) is first crucially utilized in a Poincar\'e-Sobolev type estimate  (see key Lemma \ref{thm:nondeg}) to get the nondegeneracy growth away from points $p$ in the transition layer $F_\e^\tau$, and later in a limiting argument to extend it for points $p\in \{u_\e> T\e\}$, for small enough $\e>0$. The condition \eqref{eq:condf} that we impose on the nonlinearity $f$, allows us to handle points in the remaining layer $\{\theta \e \leq u_\e\leq \t \e\}\subset Z_\e^\tau$, since it entails that $u_\e$ experiences exponential growth in $Z_\e^\tau$ (see Lemma \ref{lem:nondeg0}). Just as in \cite[Remark 2.3]{AS}, one can relax \eqref{eq:condf} to the assumption $\liminf_{s\downarrow 0}f(s)s^{-p}>0$ for some $p\geq 1$, with virtually no effect on the proof of the theorem (with the only change being that the exponential growth of $u_\e$ in $Z_\e^\t$ is replaced by a polynomial one, leading to a slightly different constant $M$). 


Our motivation to find conditions under which solutions of the singular perturbation problem \eqref{eq:SLe} enjoy the uniform nondegeneracy property sprang from the recent progress in classifying \emph{global} nonnegative solutions of the semilinear equation \eqref{eq:SLe} for $\e=1$:
\begin{equation}\label{eq:SL}
\left\{
\begin{aligned}
 u & \geq 0 & \quad \mbox{in } \R^n, \\
   \Delta  u &= f(u) & \quad \mbox{in } \R^n,
  \end{aligned}\right.
\end{equation}
and, in particular, the solutions that locally minimize the energy $I_1$. Taking into consideration that their \emph{blow-downs} $u_\e(x):=\e u(x/\e)$ are local minimizers of $I_\e$ that converge to globally defined, energy minimizing, \emph{homogeneous} solutions of the one-phase FBP \eqref{FBP}, Fern\'andez-Real and Ros-Oton formulated a natural conjecture, akin to the celebrated De Giorgi conjecture \cite{DeGio} for the Allen-Cahn equation.
\begin{conj}[\cite{FR-RO2019}]\label{conj:min}
Suppose that $u:\R^n\to (0,\infty)$ minimizes the energy $I_1$ locally. Then for $n\leq n_e^*-1$, $u$ has to be one-dimensional, i.e.\ $u(x)=V(x_n)$ in a suitable Euclidean coordinate system, where $V(t)$ is the unique (positive) solution to the ODE problem
\begin{equation}\label{eq:1d}
V'' = f(V) \quad \text{in } \R, \quad \text{with} \quad V(0)=T \quad \text{and} \quad  V'(0) = 1.
\end{equation}
Here $n^*_e$ denotes the lowest dimension in which there exists a global \emph{singular} homogeneous minimizer of $I_0$. By the works of Caffarelli-Jerison-Kenig \cite{CJK}, Jerison-Savin \cite{JS} and De Silva-Jerison \cite{DSJcone}, it is currently known that $5\leq n^*_e \leq 7. $
\end{conj}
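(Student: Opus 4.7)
The plan is to proceed by a blow-down argument. For $R\to\infty$, the rescalings $u_R(x):=R^{-1}u(Rx)$ solve \eqref{eq:SLe} with $\e=1/R$ and are local minimizers of $I_{1/R}$ on balls whose radii tend to infinity. Because minimizers satisfy the density property $\mathcal{D}(\k,L)$ with universal constants (Proposition~\ref{prop:minDKL}), Theorem~\ref{thm:nondeg:main} provides a uniform nondegeneracy bound; combined with the uniform Lipschitz estimate of Proposition~\ref{prop:Lip}, this gives enough compactness to extract, along a subsequence $R_k\to\infty$, a locally uniform limit $u_\infty:\R^n\to[0,\infty)$ that is a global local minimizer of the Alt-Caffarelli functional $I_0$. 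A Weiss-type monotonicity formula at infinity then forces $u_\infty$ to be $1$-homogeneous.

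In dimension $n\leq n_e^*-1$, the combined classification results of Caffarelli-Jerison-Kenig, Jerison-Savin and De Silva-Jerison imply that every such $1$-homogeneous global minimizer is a half-space profile $u_\infty(x)=(x\cdot e)^+$ for some unit vector $e$. Thus every blow-down of $u$ is flat, which translates quantitatively into the statement that for each $\d>0$ there exists $R_\d$ such that for all $R\geq R_\d$ the transition region $\{0<u\leq T\}\cap B_R$ is contained in a slab of width $\d R$ about a hyperplane through the origin.

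The core of the proof is to descend from this asymptotic flatness to genuine $1$D symmetry of $u$. I would first establish uniqueness of the blow-down limit (in particular, that the asymptotic normal $e$ is intrinsically defined) via an epiperimetric inequality at infinity for $I_0$, in the spirit of Engelstein-Spolaor-Velichkov. With $e$ fixed, a sliding / moving-planes argument in the direction $e$ — comparing $u(\cdot)$ with translates $u(\cdot+se)$ and using the half-space blow-down as boundary control at infinity — together with the maximum principle for $\Delta v=f(v)$ yields $\de_e u\geq 0$ on $\R^n$. A Gibbons / Ambrosio-Cabr\'e-type rigidity theorem for stable solutions of the semilinear equation, built on the inner-variation stability calculus developed later in the paper, then upgrades this monotonicity to $u(x)=W(x\cdot e)$. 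Finally, the ODE $W''=f(W)$, positivity of $W$, the matching $W(t)/t\to 1$ at $+\infty$ (inherited from the blow-down), and the energy identity $W'^2=\F_1(W)$ pin down $W=V$ up to translation, so $u$ is one-dimensional in the required sense.

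The main obstacle lies in the third paragraph. For $n\leq 4$ one can short-circuit the descent by invoking the paper's own classification of stable solutions with the density property to get flat level sets of $u$, and then apply a Savin-type ``flat level sets are hyperplanes'' theorem for $\Delta u=f(u)$. In the remaining range $5\leq n\leq n_e^*-1$, however, stability alone is not known to suffice, and one genuinely needs the blow-down uniqueness together with a robust improvement-of-flatness for the semilinear problem valid uniformly across all scales (not merely in the singular-perturbation limit $\e\downarrow 0$). Supplying such an improvement-of-flatness at fixed $\e=1$, or equivalently an epiperimetric inequality strong enough to beat the gap between Alt-Caffarelli rigidity and its semilinear counterpart, appears to be beyond current technology — which is precisely why the statement is posed as a conjecture.
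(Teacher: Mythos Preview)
The paper does \emph{not} give its own proof of this statement: it is stated as Conjecture~\ref{conj:min} (attributed to \cite{FR-RO2019}) and the paper immediately notes that it ``was recently established by Audrito and Serra \cite{AS} who devised for the context an `improvement of flatness' technique inspired by Savin's proof \ldots\ by building upon De Silva's regularity theory method \cite{DS2011} for the one-phase FBP.'' The paper then \emph{uses} the Audrito--Serra theorem as a black box in the proof of Theorem~\ref{thm:mainres}.

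Your first two paragraphs are in line with the actual Audrito--Serra strategy: blow down, pass to a homogeneous minimizer of $I_0$, and use the classification in low dimensions to obtain asymptotic flatness of the interface. Where you go astray is in the third and fourth paragraphs. The descent mechanism you propose (epiperimetric uniqueness of the blow-down normal, sliding to monotonicity, then a Gibbons/Ambrosio--Cabr\'e rigidity step) is not what is done, and several of its ingredients are themselves open or ill-suited here. More importantly, your conclusion that ``supplying such an improvement-of-flatness at fixed $\e=1$ \ldots\ appears to be beyond current technology --- which is precisely why the statement is posed as a conjecture'' is simply wrong: that improvement-of-flatness is exactly the content of \cite[Theorem~1.4]{AS}, and the conjecture is a theorem. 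The paper says so explicitly in the paragraph following the conjecture.

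So the genuine gap in your proposal is the final assessment. You correctly identified the hard step (pass from asymptotic flatness of the interface to one-dimensionality of $u$), but you then declared it out of reach instead of recognizing that Audrito--Serra supply precisely the needed Savin-type improvement-of-flatness for the semilinear problem, uniform in $\e$. Your alternate route via epiperimetric inequalities and sliding is neither the paper's approach nor Audrito--Serra's, and would require proving several auxiliary results that are not available.
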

The conjecture was recently established by Audrito and Serra \cite{AS} who devised for the context an ``improvement of flatness" technique inspired by Savin's proof \cite{savin2009regularity} of the De Giorgi conjecture, by bulding upon De Silva's regularity theory method \cite{DS2011} for the one-phase FBP. Their result holds more generally for any critical point $u$ of $I_1$ in any dimension, provided $u$ has an asymptotically flat interface and blows down to $x_n^+$. The Audrito-Serra theorem has since been used by Engelstein, Fern\'andez-Real and Yu \cite{engelstein2022graphical} in proving that global solutions of \eqref{eq:SLe} that are monotone in $x_n$ and satisfy
\begin{equation}\label{eq:monlim}
\lim_{x_n\to-\infty} u(x',x_n) = 0 \quad \text{and}\quad \lim_{x_n\to \infty} u(x',x_n) = \infty,
\end{equation}
have to be one-dimensional in dimensions $n\leq n^*_e$. 

There is another, stronger version of Conjecture \ref{conj:min} that concerns more broadly global \emph{stable} critical points of $I_1$.
\begin{conj}[\cite{FR-RO2019}]\label{conj:stable}
Suppose that $u:\R^n \to (0,\infty)$ is a stable critical point of $I_1$ in $\O=\R^n$, i.e.\ the second variation of $I_1$ at $u$
\begin{equation}\label{eq:clastab}
I_1''(u,\O)[\phi]:= \left.\frac{d^2}{dt^2}\right|_{t=0} I_1(u + t\phi, \O) = 2\int_\O \left(|\n \phi|^2 + f'(u)\phi^2 \right) dx \geq 0 \quad \text{for all } \phi \in C^\infty_c(\O).
\end{equation}
Then for $n\leq 4$, $u(x)=V(x_n)$ in an appropriate Euclidean coordinate system, where $V$ is the solution of \eqref{eq:1d}.  
\end{conj}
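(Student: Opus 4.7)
The plan is to reduce this semilinear rigidity problem to a rigidity statement for stable solutions of the one-phase free boundary problem \eqref{FBP}, by means of a blow-down. Given a stable $u:\R^n\to(0,\infty)$ of $I_1$, set $u_\e(x):=\e\,u(x/\e)$; by scaling, each $u_\e$ solves \eqref{eq:SLe} in $\R^n$ and inherits stability with respect to $I_\e$. As the abstract flags, I would interpret the hypothesis as requiring the blow-down sequence $\{u_\e\}$ to satisfy the interface density property $\mathcal{D}(\k,L)$ uniformly in $\e$. Granted this, Theorem \ref{thm:nondeg:main} provides uniform nondegeneracy \eqref{eq:nondeg0} which, combined with the universal Lipschitz bound of Proposition \ref{prop:Lip}, permits extracting a locally uniform limit $u_0:\R^n\to[0,\infty)$ along $\e_k\downarrow 0$. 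By the Lederman-Wolanski theory, $u_0$ is a viscosity solution of \eqref{FBP} whose free boundary enjoys \eqref{eq:posdens0} and \eqref{eq:nondegFB0}.

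The decisive step is transporting stability through this singular limit. Classical stability \eqref{eq:clastab} is phrased via outer (additive) variations and does not make immediate sense for $I_0$, which is not $C^1$ across $\{v=0\}$. Inner variations $\Phi_t := \id + tY$ with $Y\in C^\infty_c(\R^n;\R^n)$, however, make sense for both $I_\e$ and $I_0$. Using the new first and second inner variation formulas announced in the abstract, I would verify that stability of $u$ under outer variations implies stability of the $u_\e$ under inner variations (at a smooth critical point the two agree by integration by parts, the inner variation being an admissible outer one through the composition $u_\e\circ\Phi_t$). Semicontinuity of the second inner variation along $u_\e\to u_0$ then yields an inner stability inequality for $u_0$ with respect to $I_0$; this reduces, after integration by parts, to the classical Jacobi-type inequality on $F_0(u_0)$ involving the mean curvature and tangential Laplacian of the free boundary.

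With $u_0$ a globally inner-stable solution of \eqref{FBP} in $\R^n$, $n\leq 4$, I would invoke the free boundary analogue of the Bernstein-Simons rigidity for stable minimal hypersurfaces: testing the geometric stability inequality against a cutoff times a suitable function of the second fundamental form $A$ of $F_0(u_0)$, and absorbing the bad terms via a Simons-type inequality for $|A|$ on the free boundary, one forces $A\equiv 0$ in low dimensions. Positive zero-phase density then rules out a wedge limit and identifies $u_0$ as a half-plane $u_0(x)=(x\cdot e)^+$. Finally, because the blow-downs converge to a half-plane, $u$ has asymptotically flat interface; applying the Audrito-Serra improvement-of-flatness theorem (or the Engelstein-Fern\'andez-Real-Yu monotone variant) concludes that $u$ itself is one-dimensional, i.e.\ $u(x)=V(x\cdot e + c)$ for the profile $V$ of \eqref{eq:1d}.

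The principal obstacle is the passage of stability through the limit: one must (a) legitimately interpret the second variation of $I_0$ at $u_0$ purely through inner variations, which is precisely what the author's new formula enables, and (b) establish semicontinuity of the second inner variation along $u_\e \to u_0$, which requires controlling concentration of the transition-layer contribution and is where the uniform $\mathcal{D}(\k,L)$ density is again indispensable. A secondary but substantial hurdle is the one-phase Simons-type inequality driving the $n\leq 4$ rigidity step, for which no off-the-shelf version exists in the literature and which must be coaxed out of the second inner variation formula restricted to normal vector fields $Y=\psi\,\n u_0/|\n u_0|$ along the free boundary.
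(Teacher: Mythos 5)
The statement you have been given is Conjecture~\ref{conj:stable}, which the paper does \emph{not} prove; it explicitly records that the conjecture is known only for $n=2$. What the paper actually establishes is the \emph{conditional} Theorem~\ref{thm:mainres}, whose hypothesis includes the extra density condition \eqref{thm:mainresult:dens}, i.e.\ a uniform $\mathcal{D}(\k,L)$ property for the blow-downs. Your proposal has exactly that logical shape: you reach the conclusion by \emph{importing} the interface density hypothesis (``I would interpret the hypothesis as requiring the blow-down sequence $\{u_\e\}$ to satisfy the interface density property $\mathcal{D}(\k,L)$ uniformly''). That assumption is not part of the conjecture, and this is the genuine gap. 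Without it, neither the uniform nondegeneracy of Theorem~\ref{thm:nondeg:main} nor the positive zero-phase density \eqref{eq:posdens0} is available in the limit, and the paper warns precisely of this: for stable (not necessarily minimizing) solutions it is not known whether \eqref{eq:nondeg0} holds, and blow-down limits could be wedges $s|x_n|$ with $s\in(0,1]$. Such a degenerate limit defeats your argument twice over: the limit need not be a free boundary solution in any usable sense, and even if one could force convergence, a wedge cannot be identified with a half-plane by any stability argument, since $s|x_n|$ is stable for every $s$. Your proposal is therefore a (reasonable) sketch of Theorem~\ref{thm:mainres}, not a proof of Conjecture~\ref{conj:stable}, which remains open.

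Two secondary points. First, your step passing classical (outer) stability to inner stability is sound: this is exactly Proposition~\ref{prop:S1S2}, via $\d^2 I_1(u,\O)[X] = I_1''(u,\O)[\langle \n u, X\rangle]$, so for positive smooth solutions the two notions are compatible and inner stability does pass to the limit once $\n u_\e \to \n u_0$ in $L^2_{\text{loc}}$ and $\F_\e(u_\e)\to\F_0(u_0)$ in $L^1_{\text{loc}}$ (Proposition~\ref{prop:limsol1}). Second, the low-dimensional rigidity step that you worry must be ``coaxed out'' as a new Simons-type inequality does already exist in the literature: it is the Jerison--Savin theorem \cite{JS}, stated here as Theorem~\ref{thm:JS}, which handles exactly the Caffarelli--Jerison--Kenig stability inequality \eqref{eq:CJK}. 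The new input of the paper at that stage is Proposition~\ref{prop:critdim}, which derives \eqref{eq:CJK} from $\d^2 I_0(u_0)[X]\geq 0$ using the second inner variation formula \eqref{eq:secvarfb0}; there is no need to re-derive a free-boundary Simons identity from scratch.
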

The rigidity statement in Conjecture \ref{conj:stable} is currently known to be true only for $n=2$ \cite{FarinaValdi2009}. What seems to make this version more challenging (if one is to employ the blow-down strategy) is a lack of understanding if  blow-down limits of $u$ even solve the one-phase FBP \eqref{FBP} in certain weak sense, let alone what notion of stability is preserved in the limit. To start, it is not known if the strong nondegeneracy property \eqref{eq:nondeg0} holds for stable solutions of \eqref{eq:SL}. The implementation of the strategy is further impeded by the possibility of wedge blow-down limits $s|x_n|$, $s\in (0,1]$, which afflict the study of the rigidity problem for global stable solutions of the one-phase FBP \eqref{FBP} itself (see \cite{KW-2023}). 

In our second main result we prove that if blow-downs of $u$ satisfy a $\mathcal{D}(\k,L)$ density property uniformly, then a weaker  notion of stability is preserved in the limit thanks to the nondegeneracy Theorem \ref{thm:nondeg:main}. This enables the blow-down strategy to be executed, yielding the rigidity result in Conjecture \ref{conj:stable}.  The precise notion of stability that we employ is the one with respect to \emph{compact domain deformations}. 

\begin{definition}\label{def:vars}
Let $\O\subseteq \R^n$ be a domain and let $X\in C^\infty_c(\O;\R^n)$ be a smooth, compactly supported vector field. Denote by $\phi:\R\times \O\to\O$ its associated flow in $\O$:
$$
\de_t \phi_t(x) = X(\phi_t(x)), \quad \phi_0(x) = x. 
$$
The first and second \emph{inner variations} of the functional $I_\e(\cdot, \O)$, $\e\geq 0$, at $u\in H^1_\text{loc}(\O)$, along the vector field $X$ are given respectively by
\begin{align*}
\d I_\e(u,\O)[X]:= \left.\frac{d}{dt}\right|_{t=0} I_\e(u(\phi_t^{-1}(x)), \O) \quad\text{and} \quad \d^2  I_\e(u,\O)[X]:= \left.\frac{d^2}{dt^2}\right|_{t=0} I_\e(u(\phi_t^{-1}(x)), \O).
\end{align*}
A critical point $u_\e$ of $I_\e(\cdot, \O)$ is \emph{stable} with respect to compact domain deformations if
$$
\d^2 I_\e(u,\O)[X]\geq 0 \quad \text{for all } X\in C^\infty_c(\O;\R^n).
$$
\end{definition}
Note that if $u$ is a positive, stable critical point of $I_1(\cdot, \O)$ (in the sense of \eqref{eq:clastab}), then it is also stable with respect to compact domain deformations, since  (see Proposition \ref{prop:S1S2}) 
$$\d^2 I_1(u, \O)[X] = I_1''(u, \O)[\langle \n u, X\rangle] \quad \forall X\in C^\infty_c(\O;\R^n). $$ 
The grace of the stability notion in Definition \ref{def:vars} is that it behaves very well under taking limits. This becomes manifest from the succinct formulas \eqref{eq:firstvar}-\eqref{eq:secvar} that we derive for the first and second inner variations of $I_\e$, which also hold for $\e=0$ (and, in fact, apply to general potentials $\F_\e$ inside the functional $I_\e$). The formulas appear in a different (albeit longer) form already in \cite{le2011second} in the context of the Allen-Cahn equation, but here we derive them with the apparatus of differential geometry which, we insist, provides the right conceptual framework for the calculations (see Appendix \ref{sec:derivationvars}). In this way, we produce formulas (Proposition \ref{prop:vars}) that are valid for general Riemannian manifolds. 

We can now state our second main result. 
\begin{theorem}\label{thm:mainres}
Let $u\in C^\infty(\R^n)$ be a global positive solution of \eqref{eq:SL} that is stable with respect to compact domain deformations. Assume further that there exist constants $\k\in (0,1]$ and $L>0$ such that 
\begin{equation}\label{thm:mainresult:dens}
|B_R(x)\cap \{u\leq \tau/4\}| \geq \k |B_R(x)| \quad \text{for all } x\in \{\tau \leq u \leq T\} \text{ and all large } R\geq L.
\end{equation}
Let $n^*$ be the critical dimension from Definition \ref{def:critdim} (explained also below), which satisfies $$5 \leq n^*\leq n^*_e\leq 7.$$ 
If $n\leq n^*-1$, then in appropriate Euclidean coordinates, $u(x)=V(x_n)$, where $V(t)$ is given by the solution of \eqref{eq:1d}. 
\end{theorem}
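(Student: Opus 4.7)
The plan is to classify $u$ by a blow-down argument: transfer stability and nondegeneracy from $u$ to a limit of its rescalings, invoke the rigidity of stable one-phase FBP solutions below the critical dimension $n^*$, and conclude via improvement of flatness. Set $u_\e(x) := \e\, u(x/\e)$ for $\e\downarrow 0$. Since $\F_\e(\e t) = \F_1(t)$, each $u_\e$ is a positive classical solution of \eqref{eq:SLe}, and $I_\e(u_\e,\O) = \e^n\, I_1(u, \e^{-1}\O)$. Under the same rescaling a field $X\in C^\infty_c(\O;\R^n)$ is transported to $X_\e(y):=\e^{-1}X(\e y)\in C^\infty_c(\e^{-1}\O;\R^n)$, with flow $\psi_t(y) = \e^{-1}\phi_t(\e y)$, giving $\d^2 I_\e(u_\e,\O)[X] = \e^n\, \d^2 I_1(u, \e^{-1}\O)[X_\e]$, so every $u_\e$ is stable with respect to compact domain deformations. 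The hypothesis \eqref{thm:mainresult:dens} likewise rescales to the property $\mathcal{D}(\kappa,L)$ of Definition \ref{def:DKL} for every $u_\e$ on any fixed ball: for $x\in F_\e^\tau(u_\e)$ and $y:=x/\e \in \{\tau \leq u\leq T\}$,
\[
|B_{\e R}(x)\cap \{u_\e\leq \tau\e/4\}| = \e^n\, |B_R(y)\cap \{u\leq \tau/4\}|\geq \kappa\, |B_{\e R}|,\qquad R\geq L.
\]

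Combining the uniform Lipschitz bound of Proposition \ref{prop:Lip} with Theorem \ref{thm:nondeg:main}, I would extract a subsequential local-uniform limit $u_0\in C^{0,1}_{\mathrm{loc}}(\R^n)$ along which the positive phases $\{u_\e>0\}$ converge in Hausdorff distance. The limit $u_0$ inherits the nondegeneracy \eqref{eq:nondegFB0} and the positive density \eqref{eq:posdens0} of $Z_0(u_0)$, placing it in the Lederman--Wolanski setting; it is thus a viscosity solution of \eqref{FBP} whose free boundary is smooth off a closed $(n-1)$-Hausdorff-negligible set. Passing to the limit in $\d^2 I_\e(u_\e,\O)[X]\geq 0$ via the succinct formulas \eqref{eq:firstvar}--\eqref{eq:secvar} of Proposition \ref{prop:vars}, which remain meaningful at $\e=0$, would give $\d^2 I_0(u_0,\O)[X]\geq 0$ for every $X\in C^\infty_c(\O;\R^n)$. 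Here the $\mathcal{D}(\kappa,L)$ hypothesis is crucial: together with uniform nondegeneracy it forces $\F_\e(u_\e)\to \chrc{\{u_0>0\}}$ in $L^1_{\mathrm{loc}}$ and controls the quadratic terms in \eqref{eq:secvar} against the surface measure arising from the transition layers.

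Thus $u_0$ is a global stable solution of the one-phase FBP whose zero phase has positive density; the latter rules out wedge blow-downs of $u_0$, the known obstruction to the general rigidity problem for stable FBP solutions (cf.\ \cite{KW-2023}). In dimensions $n\leq n^*-1$, with $n^*$ the critical dimension of Definition \ref{def:critdim}, the rigidity of stable one-phase FBP solutions then forces $u_0(x)=(x_n)^+$ after rotation. In particular, $u$ has an asymptotically flat interface and blows down to $x_n^+$, so the Audrito--Serra improvement-of-flatness theorem yields $u(x)=V(x_n)$ with $V$ the solution of \eqref{eq:1d}.

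The main obstacle is the stability transfer: the second inner variation \eqref{eq:secvar} involves bulk integrals whose limits concentrate on the free boundary, so one must compute their precise distributional limit, with the uniform density hypothesis and the nondegeneracy from Theorem \ref{thm:nondeg:main} providing the control needed both for that concentration analysis and for the positive zero-phase density of $u_0$. A secondary delicate point is verifying that the inherited density truly excludes wedge blow-downs, so that the rigidity of stable FBP solutions at $n\leq n^*-1$ applies in its strongest form.
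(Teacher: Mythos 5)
Your strategy is the same as the paper's: blow down $u$, transfer the $\mathcal{D}(\kappa,L)$ density and the inner stability to the rescalings, extract a limit $u_0$, classify $u_0$ via the rigidity of stable one-phase FBP cones below $n^*$, and finish with Audrito--Serra improvement of flatness. The key intermediate object in the paper is Proposition~\ref{prop:stablenodeglim}, which packages exactly the steps you describe (uniform Lipschitz plus Theorem~\ref{thm:nondeg:main} giving nondegeneracy, Proposition~\ref{prop:limsol1} giving the convergences, and the density hypothesis giving positive zero-phase density), and concludes that $u_0$ is an inner-stable solution in the sense of Definition~\ref{def:stablevar}; then Remark~\ref{rem:entstable} supplies $u_0=x_n^+$.

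Two points in your sketch are off the mark and worth correcting. First, the ``main obstacle'' you flag --- that passing $\d^2 I_\e(u_\e)[X]\ge 0$ to the limit requires a delicate concentration/distributional analysis against a surface measure --- is not an obstacle in this framework. The formulas \eqref{eq:firstvar}--\eqref{eq:secvar} are \emph{bulk} integrals whose integrands are at most quadratic in $\nabla u_\e$ times fixed smooth coefficients depending on $X$, so once Proposition~\ref{prop:limsol1} delivers $\nabla u_{\e_k}\to\nabla u_0$ in $L^2_{\mathrm{loc}}$ and $\F_{\e_k}(u_{\e_k})\to\F_0(u_0)$ in $L^1_{\mathrm{loc}}$, the passage $\d^2 I_{\e_k}(u_{\e_k})[X]\to\d^2 I_0(u_0)[X]$ is immediate; no boundary concentration appears (the surface-measure form \eqref{eq:secvarfb0} is derived only afterward, for the cone analysis in Section~\ref{sec:stablevar}). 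Second, invoking the Lederman--Wolanski regularity (singular set of vanishing $(n-1)$-Hausdorff measure) is not sufficient: what is actually used is the regularity theory of \emph{inner-stable} solutions (Theorem~\ref{thm:stablereg}), which combines the Weiss monotonicity homogeneity of blow-downs with Proposition~\ref{prop:critdim} and dimension reduction to give singular set of Hausdorff dimension at most $n-n^*$, and hence smooth free boundary for $n\le n^*-1$. The positive density condition (item (5) of Definition~\ref{def:stablevar}) is where wedge cones are excluded, as you anticipate.
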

We establish Theorem \ref{thm:mainres} by showing that $u$ has asymptotically flat interface in dimensions $n\leq n^* -1$ and then invoking the result of Audrito and Serra \cite[Theorem 1.4]{AS}. In order to prove the asymptotic flatness of the interface, we build a general theory of convergence of solutions $u_\e$ to \eqref{eq:SLe} in a domain $\O\subseteq \R^n$, which are stable with respect to compact domain deformations and satisfy the density property $\mathcal{D}(\k,L)$, as $\e \downarrow 0$. The nondegeneracy result in our first Theorem \ref{thm:nondeg} ensures the Hausdorff distance convergence of the interface of $u_\e$ to the free boundary of the limit $u_0$, as well as the key convergence 
$$
\F_\e(u_\e) \to \F_0(u_0) \quad \text{in } L^1_\text{loc}(\O) \quad \text{as } \e\downarrow 0.
$$
The latter, along with the known $H^1_\text{loc}$-convergence of $u_\e$ to $u_0$, permits stability (in the sense of Definition \ref{def:vars}) to be preserved in the limit.

To encapsulate all the good properties of the limiting function $u_0$, we employ the notion of  \emph{inner-stable solution} to the one-phase FBP (see Definition \ref{def:stablevar} below), introduced recently in \cite{Velichkovetal2022}. This type of weak stable solution of \eqref{FBP} shares much of the same regularity theory as minimizers of the Alt-Caffarelli functional $I_0$. In particular, the free boundary $F_0(u_0)$ is a smooth hypersurface, except possibly on a closed singular subset of Hausdorff dimension at most $n-n^*$, where $n^*$ is precisely the lowest dimension which admits a singular homogeneous inner-stable solution. Since local minimizers of $I_0$ are inner-stable solutions themselves (see Remark \ref{rem:minarestable}), one trivially has $n^*\leq n^*_e$. The lower bound  $n^*\geq 5$ was proved in \cite{Velichkovetal2022} by showing that the nonnegative second inner variation condition implies the \emph{stability inequality} of Caffarelli-Jerison-Kenig \cite{CJK} for free boundary cones $u_0$ with an isolated singularity at the origin:
\begin{equation}\label{eq:CJK}
\int_{F_0(u_0)} H \phi^2 \, d\H^{n-1} \leq \int_{\O_0^+(u)} |\n \phi|^2 \quad \text{for all } \phi \in C^\infty_c(\R^n\setminus\{0\});
\end{equation}
here $H$ denotes the mean curvature of $F_0(u_0)$ with respect to the outer unit normal to $\de \O_0^+(u_0)$. It is only the partial information that energy minimizing cones with an isolated singularity satisfy \eqref{eq:CJK}, in conjunction with the dimension reduction argument of Weiss \cite{weiss1998partial}, that is used in \cite{JS} to obtain the bound $n^*_e\geq 5$.  Given that the inner-stable solution class also admits a dimension reduction principle, the same lower bound holds for $n^*$. 

In Section \ref{sec:stablevar} of our paper we present the regularity theory of inner-stable solutions to the one-phase FBP, developed by \cite{Velichkovetal2022}, almost in its entirety. The reason is two-fold. First, we do it for the reader's convenience, and second -- because we are naturally guided to use our elementary formula \eqref{eq:secvar} for the second inner variation $\d^2 I_0$, which is aligned with the (formal) convergence of $\d^2 I_\e$ to $\d^2 I_0$, in lieu of their more sophisticated formula \cite[(7.8)-(7.9)]{Velichkovetal2022}. The tools of differential geometry allow us to perform the computations leading to the stability inequality \eqref{eq:CJK}  in a transparent, methodical fashion, and in fact, we show that for any test vector field \mbox{$X\in C^\infty_c(\O;\R^n)$} that avoids the singular part of the free boundary $F_0(u_0)$ of a one-phase FBP solution $u_0$, the second inner variation of the Alt-Caffarelli energy $I_0$ at $u_0$ along $X$ has the representation formula (see Proposition \ref{prop:2varsmooth}):
\begin{equation}\label{eq:secvarfb0}
\frac{1}{2}\d^2 I_0(u_0,\O)[X] = \int_{\O^+_0(u_0)} |\n (L_X u_0)|^2\, dx - \int_{F_0(u_0)} H (L_X u_0)^2 \, d\H^{n-1}, 
\end{equation}
where $L_X$ denotes the \emph{Lie derivative} along $X$ (which coincides with the directional derivative when applied to functions). Since $\n u\neq 0$ in $\O^+_0(u)$ for homogeneous one-phase FBP solutions $u$, the condition $\d^2 I_0(u)[X]\geq 0$ is equivalent to  \eqref{eq:CJK}.

The paper is organized as follows. In Section \ref{sec:nondeg} we prove several nondegenerecy estimates for solutions of \eqref{eq:SLe}, which ultimately lead to the proof of Theorem \ref{thm:nondeg:main}. In Section \ref{sec:limsol} we state the formulas \eqref{eq:firstvar}-\eqref{eq:secvar} for the first and second inner variations of the energies $I_\e$, $\e\geq 0$, in the Euclidean setting. We then 
build a convergence theory for solutions $u_\e$ of \eqref{eq:SLe}, which satisfy the strong nondegeneracy property \eqref{eq:nondeg0} \emph{for all} $\theta \in (0,T]$. In particular, we show that their limits have trivial first inner variation $\d I_0$.  Section \ref{sec:stablevar} describes the regularity theory of inner-stable solutions to the one-phase FBP, developed by \cite{Velichkovetal2022}. Finally, in Section \ref{sec:proofmainres2} we show that solutions $u_\e$ of \eqref{eq:SLe} that are stable with respect to compact domain deformations and satisfy a $\mathcal{D}(\k,L)$ property uniformly for all small $\e>0$, converge to inner-stable solutions of \eqref{FBP} as $\e \downarrow 0$. As a corollary, we obtain the proof of Theorem \ref{thm:mainres}. 

In the two appendices \ref{sec:derivationvars} and \ref{sec:fbvars} to the article, we provide the key technical results related to the first and second inner variation for the functionals $I_\e$, $\e\geq 0$, which support the exposition in Sections \ref{sec:limsol}-\ref{sec:stablevar}. Appendix \ref{sec:derivationvars} is devoted to the computation of $\d I_\e$ and $\d^2 I_\e$ in the setting of a general Riemannian manifold (see Proposition \ref{prop:vars}); its reading requires a very basic acquaintance with tensor calculus. In it we also expand on the divergence structure of the integrands appearing in the integral formulas for the inner variations (see Lemmas \ref{lem:V1} and \ref{lem:V2}). This latter information is then exploited in Appendix \ref{sec:fbvars}, in which we simplify the formulas for $\d I_0$ and $\d^2 I_0$ in the Euclidean setting and establish the formula \eqref{eq:secvarfb0} for $\d^2 I_0(u_0)$ at a critical point $u_0$ of the Alt-Caffarelli energy. 

With great pleasure I dedicate this paper to David Jerison on the occasion of his 70th birthday. I am profoundly grateful for all the math that I have learned and continue to learn from him, for his generosity, guidance and friendship.

\section{Nondegeneracy estimates}\label{sec:nondeg}

The goal of this section is to establish Theorem \ref{thm:nondeg:main}, which we do in a sequence of nondeneracy estimates. Before we start with these, we record the uniform interior Lipschitz bound that solutions of \eqref{eq:SLe} satisfy. 

\begin{prop}[Uniform Lipschitz continuity; see Theorem 1.2 of \cite{CafSalsa}]\label{prop:Lip} Let $u_\e\in C^2(B_2)$ be a  solution of \eqref{eq:SLe} in $B_2$ and assume that $0\in \{u_\e\leq T\e\}$. Then
\begin{equation}\label{prop:Lip:eq}
\|\n u_\e \|_{L^\infty(B_1)}\leq C
\end{equation}
for some constant $C=C(n,f)>0$.
\end{prop}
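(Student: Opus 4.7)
The bound is the classical uniform Lipschitz estimate of Caffarelli--Salsa, and my plan is a two-step argument: (i) reduce the Lipschitz estimate to a linear growth bound at the interface, and (ii) establish the linear growth by a compactness/rescaling contradiction.

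\emph{Step 1: Reduction to linear growth.} Since $f_\e\geq 0$, the equation $\Delta u_\e=f_\e(u_\e)$ makes $u_\e$ subharmonic on $B_2$ and harmonic on the open set $\{u_\e>T\e\}$. I would first establish the linear-growth bound
\begin{equation*}
\sup_{B_r(x_0)}u_\e \leq C(r+\e) \quad\text{for every } x_0\in\overline{\{u_\e\leq T\e\}}\cap B_{3/2},\ r\in(0,1/2], \tag{$\star$}
\end{equation*}
with $C=C(n,f)$. Granting $(\star)$, the pointwise Lipschitz bound at $x\in B_1$ follows by splitting cases according to $d:=\operatorname{dist}(x,\{u_\e\leq T\e\})$: when $d\geq\e$, $u_\e$ is harmonic on $B_d(x)$, and the standard harmonic gradient estimate combined with $(\star)$ applied on $B_{2d}(x_0)$ (for $x_0$ the nearest interface point) yields $|\n u_\e(x)|\leq (C_n/d)\cdot C(2d+\e)\leq C$; when $d<\e$, $(\star)$ gives $u_\e\leq 2C\e$ on $B_\e(x_0)\supset B_{\e/2}(x)$, and the interior gradient estimate for $\Delta u=g$ with $\|g\|_\infty\leq\e^{-1}\|f\|_\infty$ at scale $\e/2$ delivers $|\n u_\e(x)|\leq C$. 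The case $u_\e(x)\leq T\e$ is analogous, with $x=x_0$.

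\emph{Step 2: Linear growth by compactness.} To prove $(\star)$ I would argue by contradiction: a failure produces a sequence $u_k=u_{\e_k}$, interface points $x_k\in\overline{\{u_k\leq T\e_k\}}\cap B_{3/2}$, and scales $r_k\in(0,1/2]$ with $s_k:=(r_k+\e_k)^{-1}\sup_{B_{r_k}(x_k)}u_k\to\infty$. A standard ``first bad scale'' selection further arranges the matching upper bounds $\sup_{B_{\rho}(y)}u_k\leq Cs_k(\rho+\e_k)$ on all slightly larger scales near $x_k$. Rescale
\[
v_k(y):=\frac{u_k(x_k+r_k y)}{s_k r_k},\qquad \tilde\e_k:=\frac{\e_k}{s_k r_k}\leq s_k^{-1}\to 0,
\]
so that $v_k$ solves a singular perturbation equation of the same form, with parameter $\tilde\e_k$, satisfying $v_k(0)\leq T\tilde\e_k\to 0$, $\sup_{B_1}v_k=1$, and $\sup_{B_R}v_k\lesssim R$ on larger balls. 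Passing to a subsequential local-uniform limit yields $v_\infty\geq 0$ on $\R^n$ --- subharmonic, harmonic on $\{v_\infty>0\}$, with $v_\infty(0)=0$ and $\sup_{B_1}v_\infty=1$ --- a weak solution of the one-phase problem \eqref{FBP} for which the combination of Hopf's boundary-point lemma (applied at the zero $x=0$) with the nondegeneracy dichotomy forces $v_\infty\equiv 0$ in a neighbourhood of the origin, contradicting $\sup_{B_1}v_\infty=1$.

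\emph{Main obstacle.} The chief technical difficulty is ensuring that the limit $v_\infty$ inherits a genuine free boundary near the origin rather than merely touching $0$ at an isolated point (where Hopf-type rigidity would fail). This requires showing that the measures $f_{\tilde\e_k}(v_k)\,dx$ concentrate in the Hausdorff sense on $\partial\{v_\infty>0\}$ and that $\F_{\tilde\e_k}(v_k)\to\F_0(v_\infty)$ in $L^1_{\text{loc}}$; once this free-boundary convergence is in hand, the Alt--Caffarelli regularity theory for one-phase solutions closes the contradiction and delivers $(\star)$.
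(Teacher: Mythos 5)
The paper does not prove Proposition~\ref{prop:Lip}: it records the statement with a direct citation to Theorem~1.2 of \cite{CafSalsa} and moves on, so there is no in-paper argument to compare against. Your attempt therefore has to stand on its own.

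Step~1 (reducing the Lipschitz bound to the linear-growth bound $(\star)$ via harmonic estimates in the positive phase and interior $C^{1,\a}$-estimates at scale $\e$) is correct and is the standard reduction. Step~2, however, has a genuine gap. First, a computational point: with $v_k(y)=u_k(x_k+r_ky)/(s_kr_k)$ and $\tilde\e_k=\e_k/(s_kr_k)$, one gets $\Delta v_k = s_k^{-2}\,f_{\tilde\e_k}(v_k)$, not an equation ``of the same form'' with parameter $\tilde\e_k$. The extra degenerate factor $s_k^{-2}\to 0$ is in fact where any hope of a contradiction lives, because it forces the free-boundary gradient condition in the limit to degenerate to $|\nabla v_\infty|=0$; your write-up misses this entirely. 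More seriously, the claimed contradiction is not justified. A priori the limit $v_\infty$ is only nonnegative, subharmonic, with $v_\infty(0)=0$ and $\sup_{B_1}v_\infty=1$; functions like $v_\infty(x)=x_n^+$ have all of these properties, so Hopf at $x=0$ does not force $v_\infty\equiv 0$ near the origin. To obtain a contradiction you would have to show that $v_\infty$ is actually a viscosity solution of a one-phase FBP with the degenerate gradient condition $|\nabla v_\infty|=0$ on $F_0(v_\infty)$, and moreover that $0\in F_0(v_\infty)$ admits a tangent ball from the positive side. The first requires passing to the limit in the measures $s_k^{-2}f_{\tilde\e_k}(v_k)\,dx$ and in $\F_{\tilde\e_k}(v_k)$, which needs exactly the uniform Lipschitz and nondegeneracy estimates on $v_k$ that are the subject of the whole argument --- this is the circularity you partly acknowledge as the ``main obstacle,'' and it is not a minor technicality. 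The second requires nondegeneracy of $v_\infty$, which, as the paper emphasizes with the wedge solutions $V_\e^s$, can fail for non-minimizing critical points; Lipschitz continuity holds for all solutions of \eqref{eq:SLe} precisely because its proof does \emph{not} rely on nondegeneracy. The Caffarelli--Salsa argument being invoked is a direct (non-compactness) proof: it exploits the subharmonicity of $u_\e$, the $L^\infty$-bound $\|f_\e(u_\e)\|_\infty\lesssim\e^{-1}$, and a representation/comparison argument at interface points to obtain the linear growth $(\star)$ without any blow-up limit.
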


We also recall the notation set earlier in \eqref{eq:defZF}:
\begin{equation*}
Z_\e^\theta(u_\e):=\{x\in \O: u_\e(x)\leq \theta\e \} \quad \text{and} \quad F_\e^\theta(u_\e):= \{x\in \O: \theta\e\leq u_\e(x)\leq T\e\}
\end{equation*}
from which we will often drop the reference to $u_\e$, whenever it is implicit. 

The first nondegeneracy lemma can be viewed as the statement that solutions $u_\e$ of \eqref{eq:SLe} experience (exponential) growth inside the set $Z_\e^\t(u_\e)$.

\begin{lemma}\label{lem:nondeg0}
Let $u_\e\in C^2(B_1)$ be a solution of \eqref{eq:SLe} in $B_1\subset \R^n$ and assume that
\[
u_\e(0) \geq \theta \e \quad \text{for some } \theta \in (0,\tau).
\]
Then there exists a constant $c_1=c_1(n)$ such that for $M:=2\log(c_1\tau/\theta)/\sqrt{c_0} = M(\theta, n, c_0)$
\begin{equation}
\sup_{B_{M\e}(0)} u_\e \geq \tau \e, \quad \text{provided } B_{M\e}\subset B_1.
\end{equation}
\end{lemma}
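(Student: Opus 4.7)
The plan is to rescale to unit scale and then build an explicit radial barrier that decays exponentially from the boundary of $B_M$ toward the origin.

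First, I would rescale to eliminate $\e$: set $v(y):=u_\e(\e y)/\e$ on $B_{1/\e}\supset B_M$. Then $v$ is a nonnegative classical solution of $\D v=f(v)$ with $v(0)\geq \theta$, and the conclusion is equivalent to $\sup_{B_M} v\geq \tau$. I would then argue by contradiction, supposing $v<\tau$ on $\overline{B_M}$. Condition \eqref{eq:condf} immediately yields $f(v)\geq c_0 v$ throughout $B_M$, so $v$ becomes a classical subsolution of the linear equation
\begin{equation*}
\D w-c_0 w=0 \quad \text{in } B_M.
\end{equation*}

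Next, I would introduce the radial barrier $w$ solving this linear equation in $B_M$ with $w\equiv \tau$ on $\de B_M$. The weak maximum principle (applicable because the zeroth-order coefficient $-c_0$ is negative) gives $v\leq w$ in $B_M$: indeed $v-w$ is itself a subsolution with nonpositive boundary data. In particular $\theta\leq v(0)\leq w(0)$.

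Finally, I would estimate $w(0)$ via the explicit form of the radial ODE. The regular positive solution of $\psi''+\frac{n-1}{r}\psi'=c_0\psi$ is $\psi(r)=r^{-(n-2)/2}I_{(n-2)/2}(\sqrt{c_0}\,r)$, where $I_\nu$ denotes the modified Bessel function of the first kind, so $w(r)=\tau\,\psi(r)/\psi(M)$. The classical asymptotic $I_\nu(t)\sim e^t/\sqrt{2\pi t}$ as $t\to\infty$, combined with the finite nonzero value $\psi(0)$, gives
\begin{equation*}
w(0)\leq C(n)\,\tau\,(\sqrt{c_0}\,M)^{(n-1)/2}\,e^{-\sqrt{c_0}M}
\end{equation*}
for $M$ larger than a dimensional threshold. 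Absorbing the polynomial prefactor into one half of the exponent produces a dimensional constant $c_1=c_1(n)$ for which $w(0)\leq c_1\tau e^{-\sqrt{c_0}M/2}$, and the chain $\theta\leq v(0)\leq w(0)\leq c_1\tau e^{-\sqrt{c_0}M/2}$ yields a contradiction precisely when $M>2\log(c_1\tau/\theta)/\sqrt{c_0}$, which is the content of the lemma. The only delicate step is the polynomial prefactor in the Bessel asymptotics, and this is exactly what the factor of $2$ in the definition of $M$ is there to absorb.
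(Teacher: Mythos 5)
Your proof is correct and rests on the same core idea as the paper's: once one observes that \eqref{eq:condf} forces $\Delta v \geq c_0 v$ in the region where $v\leq \tau$, one compares $v$ with a radial solution of the linear equation $\Delta w = c_0 w$ and exploits exponential growth/decay to conclude. The difference is purely in how the exponential bound on $w(0)$ is extracted. The paper rescales by $\sqrt{c_0}$ so that $w$ solves $-\Delta w + w = 0$, and then quotes a weighted mean-value formula from Caffarelli--C\'ordoba, $w(0) = \phi(r)^{-1}\dashint_{\partial B_r} w$ with $\phi(r)\geq c_1^{-1}e^{r/2}$; this produces the clean bound $w(0)\leq c_1\tau e^{-R_1/2}$ in one stroke, with no asymptotics. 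You instead construct the barrier explicitly in terms of the modified Bessel function $I_{(n-2)/2}$ and use its large-argument asymptotic, absorbing the polynomial prefactor $(\sqrt{c_0}M)^{(n-1)/2}$ into half the exponent. Both routes are legitimate; yours is more self-contained (it does not lean on an external mean-value identity), while the paper's is slicker. Two small points worth tightening in your write-up: (i) the Bessel asymptotic gives a lower bound on $I_\nu(kM)$ only for $kM$ above a dimensional threshold, so you should note that for $kM$ below that threshold the trivial bound $w(0)\leq \tau$ (maximum principle) suffices once $c_1$ is chosen large enough; and (ii) your contradiction argument establishes the conclusion for all $M$ \emph{strictly greater} than $2\log(c_1\tau/\theta)/\sqrt{c_0}$, and you should invoke continuity of $u_\e$ (or inflate $c_1$ slightly) to close the gap at $M$ itself, just as the paper's formulation via the largest ball $B_R\subseteq Z_\e^\tau$ sidesteps this.
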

\begin{proof}
Let $B_R(0)$ be the largest ball, centered at the origin, such that $B_R\subseteq Z_\e^\tau$. We would like to show that $R\leq M \e$. We notice that
\[
\D u_\e = f_\e (u_\e) \geq \frac{c_0}{\e^2} u_\e \quad \text{in } B_R,
\]
and thus $v(x):=u_\e(\e x/\sqrt{c_0})/\e$ solves
\[
-\D v + v \leq 0 \quad \text{in } B_{R_1},
\]
where $R_1:=R \sqrt{c_0}/\e$. Defining $w\in C^2(B_{R_1})\cap C(\overline{B_{R_1}})$ to be the solution of $-\D w + w = 0$ in $B_{R_1}$ with boundary values given by $v$, the maximum principle tells us that $v\leq w$ in $B_{R_1}$. Now, it is known (see \cite[pp.\ 214]{CafCordo2}) that $w$ satisfies the weighted mean-value formula
\begin{align*}
w(0) &= \frac{1}{\phi(r)} \dashint_{\de B_r} w \, d\H^{n-1}, \quad \text{for all } r\leq R_1, \\ 
\text{where} \quad  \phi(r) &:= \dashint_{\de B_r} e^{x_1} \, d\H^{n-1}\geq c_1^{-1} e^{r/2},
\end{align*}
for some $c_1=c_1(n)$. Therefore, we have
\[
\theta\leq v(0)\leq w(0)\leq \frac{1}{\phi(R_1)} \dashint_{\de B_{R_1}} v \, d\H^{n-1} \leq c_1 e^{-R_1/2} \tau,
\]
and we can conclude the desired bound 
$$
R= (R_1/\sqrt{c_0})\e\leq (2\log(c_1\tau/\theta)/\sqrt{c_0})\e = M\e.
$$
\end{proof}

For the next nondegeneracy result we will need the following Poincar\'e-Sobolev inequality, whose proof can be adapted from \cite[Theorem 1 on pp.\ 290]{evans2010partial}:
\begin{lemma}\label{lem:PS}
Assume that $g\in W^{1,1}(B_R)$ satisfies 
\[
|x\in B_R: g(x)=0|\geq \k |B_R|.
\]
Then there exists a constant $C=C(\k)$ such that 
\begin{equation}\label{lem:PS:eq}
\|g\|_{L^1(B_R)}\leq C R \|\n g\|_{L^1(B_R)}. 
\end{equation}
\end{lemma}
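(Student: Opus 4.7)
The plan is to prove this standard Poincaré-type inequality by first reducing to the unit ball via scaling and then combining the classical Poincaré–Wirtinger inequality with the zero-set hypothesis to control the mean.

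First, I would note that by the rescaling $\tilde g(y) := g(Ry)$, one has $\|\tilde g\|_{L^1(B_1)} = R^{-n}\|g\|_{L^1(B_R)}$, $\|\nabla \tilde g\|_{L^1(B_1)} = R^{1-n}\|\nabla g\|_{L^1(B_R)}$, and $|\{\tilde g = 0\} \cap B_1| \geq \kappa |B_1|$. Hence it suffices to prove \eqref{lem:PS:eq} for $R = 1$, picking up the factor of $R$ automatically by homogeneity.

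Second, I would invoke the Poincaré–Wirtinger inequality on the ball $B_1$: there is a dimensional constant $C_n$ such that
\begin{equation*}
\|\tilde g - \bar g\|_{L^1(B_1)} \leq C_n \|\nabla \tilde g\|_{L^1(B_1)}, \qquad \bar g := \dashint_{B_1} \tilde g \, dx.
\end{equation*}
This is the reference to \cite[pp.\ 290]{evans2010partial} used by the author (stated there for the $L^p$–$L^p$ version, but the $L^1$–$L^1$ case follows by the same proof via compactness, or by taking $p \downarrow 1$).

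Third, I would use the vanishing-set hypothesis to estimate $|\bar g|$. Letting $E := \{\tilde g = 0\} \cap B_1$, on $E$ we have $\tilde g - \bar g = -\bar g$, so
\begin{equation*}
|\bar g| \cdot |E| = \int_E |\tilde g - \bar g| \, dx \leq \|\tilde g - \bar g\|_{L^1(B_1)} \leq C_n \|\nabla \tilde g\|_{L^1(B_1)}.
\end{equation*}
Since $|E| \geq \kappa|B_1|$, this gives $|\bar g| \leq (C_n/(\kappa|B_1|)) \|\nabla \tilde g\|_{L^1(B_1)}$, and hence $\|\bar g\|_{L^1(B_1)} = |\bar g| \cdot |B_1| \leq (C_n/\kappa) \|\nabla \tilde g\|_{L^1(B_1)}$.

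Finally, the triangle inequality $\|\tilde g\|_{L^1(B_1)} \leq \|\tilde g - \bar g\|_{L^1(B_1)} + \|\bar g\|_{L^1(B_1)}$ combined with the two preceding bounds yields the inequality at scale $R = 1$ with constant $C = C_n(1 + 1/\kappa)$. Undoing the scaling gives \eqref{lem:PS:eq}. There is no real obstacle here — this is a textbook argument — and the only minor point worth verifying is the $L^1$ version of Poincaré–Wirtinger on the ball, which one can either cite directly or deduce by a standard compactness/contradiction argument using the compact embedding $W^{1,1}(B_1) \hookrightarrow L^1(B_1)$.
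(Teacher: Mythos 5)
Your proof is correct and is exactly the adaptation the paper has in mind: the paper gives no argument of its own, only the citation to Evans, and the standard way to ``adapt'' Evans's Poincar\'e inequality to the present setting is to do what you do---rescale to $B_1$, apply the Poincar\'e--Wirtinger inequality to control $\tilde g - \bar g$, and then exploit the large vanishing set to bound the mean $\bar g$. One small remark: your concern about whether the $L^1$--$L^1$ Poincar\'e--Wirtinger inequality is available is unfounded; Evans's Theorem~1 on p.~290 is stated and proved for all $1 \leq p \leq \infty$, so the case $p=1$ can be cited directly without passing to a limit or rerunning the compactness argument.
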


We now present our key uniform nondegeneracy lemma.
\begin{lemma}\label{thm:nondeg} Let $u_\e\in C^2(B_1)\cap C(\overline{B_1})$ be a solution of \eqref{eq:SLe} in $B_1\subset \R^n$, $\e>0$. Suppose that 
\begin{equation}\label{thm:nondeg:cond2}
u_\e(0) \geq \t\e,
\end{equation}
\begin{equation}\label{thm:nondeg:cond1}
|\{ u_\e \leq (\tau/4)\e \} \cap B_{1/2}| \geq \k |B_{1/2}|, \quad \text{for some } \k\in (0,1],
\end{equation}
and that $u_\e$ satisfies the universal Lipschitz bound \eqref{prop:Lip:eq} in $B_1$. Then 
\begin{equation}\label{thm:nondeg:eq}
\sup_{B_{1}} u_\e \geq \mu,
\end{equation}
for some constant $\mu=\mu(n, \kappa, f)>0$.
\end{lemma}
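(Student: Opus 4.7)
The plan is to argue by contradiction. Suppose that there is a sequence $(\e_k, u_k)$ of solutions satisfying the hypotheses with $M_k := \sup_{B_1} u_k \to 0$; since $u_k(0) \geq \tau\e_k$, this forces $\e_k \to 0$ as well. The quantitative input will be the natural Poincar\'e--Sobolev test function $g_k := (u_k - (\tau/4)\e_k)_+$, which by the hypothesis \eqref{thm:nondeg:cond1} vanishes on a set of density at least $\kappa$ in $B_{1/2}$. Lemma~\ref{lem:PS} then yields
$$\|g_k\|_{L^1(B_{1/2})} \leq C(\kappa) \int_{B_{1/2}} |\nabla u_k|\, \chi_{\{u_k > (\tau/4)\e_k\}}\, dx.$$
Combined with a standard Caccioppoli-type estimate (obtained by testing the PDE $\Delta u_k = f_{\e_k}(u_k) \geq 0$ against $u_k \eta^2$ for a cutoff $\eta \equiv 1$ on $B_{1/2}$, $\eta \equiv 0$ outside $B_1$, and absorbing via Young), which gives $\|\nabla u_k\|_{L^2(B_{1/2})} \leq C M_k$, with Cauchy--Schwarz and the density bound $|\{u_k > (\tau/4)\e_k\} \cap B_{1/2}| \leq (1-\kappa)|B_{1/2}|$, one arrives at the key quantitative estimate $\|g_k\|_{L^1(B_{1/2})} \leq C(n,\kappa)\, M_k$.

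To convert this bound into a contradiction, I would pass to the natural $\e_k$-scale blow-up, setting $w_k(y) := u_k(\e_k y)/\e_k$. Then $w_k$ is a $C^2$ solution of $\Delta w = f(w)$ in $B_{1/\e_k}$, uniformly Lipschitz via Proposition~\ref{prop:Lip}, with $w_k(0) \geq \tau$ and the rescaled density $|\{w_k \leq \tau/4\} \cap B_{1/(2\e_k)}| \geq \kappa |B_{1/(2\e_k)}|$. If $M_k/\e_k$ stays bounded along a subsequence, Arzel\`a--Ascoli together with elliptic regularity yield a $C^2_{\text{loc}}$ global limit $w:\R^n \to [0,\infty)$, bounded, solving $\Delta w = f(w)$, with $w(0) \geq \tau$ and $|\{w \leq \tau/4\} \cap B_R| \geq \kappa|B_R|$ for every $R$. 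Since $f \geq 0$ makes $w$ subharmonic and bounded, Liouville forces $w \equiv c$; then $f(c)=0$ together with $w(0)\geq \tau$ forces $c \geq T$, contradicting the positive density of $\{w \leq \tau/4\}$.

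In the complementary regime $M_k/\e_k \to \infty$, the relevant rescaling is at the $M_k$-scale: $V_k(z) := u_k(M_k z)/M_k$ on $B_{1/M_k}$, uniformly Lipschitz, bounded above by $1$, with $V_k(0) \to 0$. Its PDE $\Delta V_k = (M_k/\e_k)\, f((M_k/\e_k) V_k)$ has nonlinearity supported on the shrinking set $\{V_k \leq T\e_k/M_k\}$, so outside this set $V_k$ is harmonic. A locally uniform limit $V$ is Lipschitz, nonnegative, harmonic on $\{V > 0\}$, with $V(0) = 0$ and $|\{V = 0\} \cap B_R| \geq \kappa |B_R|$ for every $R$; extended by zero it is a bounded nonnegative subharmonic function on $\R^n$, hence identically zero by Liouville. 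Combined with the quantitative estimate $\|g_k\|_{L^1(B_{1/2})} \leq C M_k$ and the lower bound $\|g_k\|_{L^1(B_{1/2})} \geq c\e_k^{n+1}$ (coming from the Lipschitz bound and $u_k(0) \geq \tau\e_k$, which force $u_k \geq \tau\e_k/2$ on a ball of radius $\sim \e_k$ around the origin), this rules out the unbounded regime as well.

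The main obstacle I expect is precisely this case $M_k/\e_k \to \infty$: the Poincar\'e--Sobolev step by itself gives only the weak estimate $M_k \gtrsim \e_k^{n+1}$, so the universal lower bound $\mu = \mu(n, \kappa, f)$ genuinely requires the rigidity of a compactness plus Liouville argument at both natural blow-up scales, and the delicate point is ensuring that the density hypothesis, which is posed only on the single ball $B_{1/2}(0)$ in the original variables, survives both rescalings intact so as to be available in the limit.
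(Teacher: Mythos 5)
There are two genuine gaps, and your overall route (compactness plus Liouville) differs entirely from the paper's, which is a direct, quantitative iteration with no passage to a limit.

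First, the ``key quantitative estimate'' $\|g_k\|_{L^1(B_{1/2})} \le C(n,\kappa)\,M_k$ with $g_k := (u_k - (\tau/4)\varepsilon_k)_+$ is vacuous: since $0 \le g_k \le u_k \le M_k$, the bound $\|g_k\|_{L^1(B_{1/2})} \le M_k |B_{1/2}|$ is immediate, so the Poincar\'e--Sobolev/Caccioppoli machinery you set up produces no new information. The paper applies Lemma~\ref{lem:PS} to a different function, $g := (\F_\e(u_\e) - \F_\e(\tau\e/4))^+$, which is bounded below by a positive constant on $\{u_\e > \tau\e/2\}$ \emph{uniformly in} $\e$. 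That is what turns the $L^1$ bound on $g$ into a \emph{measure} bound $|B_{1-\rho}\cap\{u_\e>\tau\e/2\}|\lesssim \sigma\rho^{-2}$ with $\sigma=\sup_{B_1}u_\e$, which carries actual content. Your choice of $g_k$ cannot give this, because $g_k$ is only of size $\sim\e_k$ on $\{u_k\sim\tau\e_k\}$, not of size $\sim 1$.

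Second, the compactness step does not close, and you correctly flagged the exact reason: under either rescaling (by $\e_k$ or by $M_k$) the radius $1/2$ of the ball on which \eqref{thm:nondeg:cond1} holds is sent to infinity, so the density hypothesis is available only on $B_{R_k}$ with $R_k\to\infty$ and does not localize to a fixed $B_R$; it therefore does not survive to the limit. In the bounded regime ($M_k/\e_k$ bounded), $w\equiv c\ge\tau$ does make $\{w\le\tau/4\}$ empty, but that only contradicts a density statement on a \emph{fixed} ball, which you do not have. In the unbounded regime ($M_k/\e_k\to\infty$), $V\equiv 0$ contradicts nothing, and the only quantitative output you retain is $M_k\gtrsim\e_k^{n+1}$, which is perfectly compatible with $M_k\to 0$. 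The paper sidesteps all of this by never blowing up: Steps 1--4 establish the superlinear recursion $\sigma_r \le C(R-r)^{-n-1}\sigma_R^{2}$ for $\sigma_r := r^{-1}\sup_{B_r}u_\e$, valid for $1/2\le r<R\le 1$, and Step 5 iterates entirely within $[1/2,1]$: if $\sigma_1\le\mu(n,\kappa,f)$ were small, the iteration would drive $\sup_{B_{1/2}}u_\e$ below $\tau\e$, contradicting $u_\e(0)\ge\tau\e$. This uses the density hypothesis exactly once, at the scale where it is given, and needs no $\e$-independent limit object.
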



\begin{proof}[Proof of Theorem \ref{thm:nondeg}]
Denote by 
\[
\s:= \sup_{B_{1}} u_\e.
\]
We will carry out the proof in several steps. In what follows, the letters $C, c$ (possibly with indices and primes) will denote positive constants which depend only on $n$, $\k$, and $f$.  Take $\rho \in (0,1/4]$. \\

\noindent \emph{Step 1.} We start with the simple estimate
\begin{equation}\label{thm:nondeg:Step1}
\int_{B_{1-\rho}} \D u_\e \, dx \leq C_1 \s \rho^{-2}.
\end{equation}
Indeed, taking a standard, nonnegative cut-off function $\phi\in C^{2}_c(B_1)$ such that $\phi\equiv 1$ in $B_{1-\rho}$ and $\|\phi\|_{C^2(B_1)}\leq c/\rho^2$, we have
\begin{align*}
\int_{B_{1-\rho}} \D u_\e \, dx \leq \int_{B_{1}} \D u_\e \phi \, dx =  \int_{B_{1}} u_\e \D \phi \, dx \leq C_1 \s \rho^{-2}.
\end{align*}

\noindent \emph{Step 2.} We will next show that
\begin{equation}\label{thm:nondeg:Step2}
|B_{1-\rho}\cap \{u_\e > \t \e/2 \}| \leq C_2 \s \rho^{-2}.
\end{equation}
by exploiting the observation that 
\[
1\sim \mathcal{F}_\e(u) \sim \mathcal{F}_\e(u) - \mathcal{F}_\e(\t\e /4) \quad \text{whenever}\quad  u\geq \t\e/2,
\]
with constants depending only on $f$. 
For the purpose, consider $g := (\mathcal{F}_\e(u_\e) - \mathcal{F}_\e(\t \e/4))^+ $ and observe that 
\[
|\nabla g| \leq |\n\F_\e(u_\e)|  = 2 \D u_\e |\n u_\e| \leq C \D u_\e \quad \text{in } B_1,
\]
because of the assumed universal Lipschitz bound of $u_\e$ in $B_1$. 
Furthermore, because of \eqref{thm:nondeg:cond1}, $g$ vanishes inside $B_{1-\rho}$ on a set of measure at least $\k B_{1/2}$, so that we may apply the Poincar\'e-Sobolev inequality \eqref{lem:PS:eq} to $g$ in $\O=B_{1-\rho}$, obtaining
\begin{equation}\label{thm:nondeg:interm}
\int_{B_{1-\rho}} g \, dx  \leq c \int_{B_{1-\rho}} |\n g|\, dx\leq \tilde{c}  \int_{B_{1-\rho}} \D u_\e |\n u_\e|  \, dx \leq \tilde{c}_1 \s \rho^{-2},
\end{equation}
where the last inequality is a consequence of the bound \eqref{thm:nondeg:Step1} from Step 1. Now, we get
\begin{equation*}
|B_{1-\rho}\cap \{u_\e > \t \e/2 \}| = \int_{B_{1-\rho}\cap \{u_\e > \t \e/2 \}} 1\, dx \leq  \int_{B_{1-\rho}} c_1 (\mathcal{F}_\e(u_\e) - \mathcal{F}_\e(\t \e/4))^+ \, dx \leq  C_2  \s \rho^{-2}.
\end{equation*}

\noindent \emph{Step 3.} At this stage, we will obtain an $L^1(B_{1-\rho})$ bound on $u_\e$ in terms of the \emph{square} of $\sigma=\sup_{B_1}u_\e$:
\begin{equation}\label{thm:nondeg:Step3main}
\int_{B_{1-\rho}} u_\e \, dx \leq C_3 \s^2 \rho^{-2}. 
\end{equation}
First, we claim that 
\begin{equation}\label{thm:nondeg:Step3}
\int_{B_{1-\rho}} u_\e \, dx \leq 2 \int_{\{u_\e > \t \e /2\}} u_\e \, dx. 
\end{equation}
Indeed, we have
\begin{align*}
\int_{B_{1-\rho}} u_\e \, dx &= \int_{B_{1-\rho}\cap \{u_\e \leq u(0)/2\}} u_\e \, dx + \int_{B_{1-\rho}\cap \{u_\e > u(0)/2\}} u_\e \, dx \\
& \leq \frac{1}{2} u_\e(0) |B_{1-\rho}| + \int_{B_{1-\rho}\cap \{u_\e > u(0)/2\}} u_\e \, dx \\
& \leq \frac{1}{2} \int_{B_{1-\rho}} u_\e \, dx +  \int_{B_{1-\rho}\cap \{u_\e > u(0)/2\}} u_\e\, dx 
\end{align*}
where the last inequality follows from the mean-value property, enjoyed by the subharmonic $u_\e$. As $u_\e(0)\geq \t\e$, we confirm the  validity of \eqref{thm:nondeg:Step3}:
\[
\frac{1}{2}\int_{B_{1-\rho}} u_\e \, dx \leq  \int_{B_{1-\rho}\cap \{u_\e > u(0)/2\}} u_\e\, dx \leq  \int_{B_{1-\rho}\cap \{u_\e > \t\e/2\}} u_\e.
\]
Now, \eqref{thm:nondeg:Step3main} follows after combining \eqref{thm:nondeg:Step3} with \eqref{thm:nondeg:Step2}
\begin{align*}
\int_{B_{1-\rho}} u_\e \, dx &\leq 2 \int_{B_{1-\rho}\cap\{u_\e > \t \e /2\}} u_\e\, dx  \leq  2 \s |B_{1-\rho}\cap \{u_\e>\t\e/2\}| \leq  C_3 \s^2 \rho^{-2}.
\end{align*}

\noindent \emph{Step 4.} The $L^1$-estimate of the subharmonic $u_\e$ in $B_{1-\rho}$ entails a bound on the supremum of $u_\e$ on a slightly smaller scale. Indeed, since $u_\e$ is subharmonic, the function
\begin{equation}
r \rightarrow r^{1-n} \int_{\de B_r} u_\e \, d\H^{n-1} 
\end{equation}
is increasing in $r$, so that for $\rho\in (0,1/4]$
\begin{align}
\int_{\de B_{1-3\rho/2}} u_\e \, d\H^{n-1}  & \leq \frac{1}{\rho/2} \int_{1-3\rho/2}^{1-\rho} \left(\frac{1-3\rho/2}{r}\right)^{n-1}  \int_{\de B_r} u_\e \, d\H^{n-1}  \, dr \notag \\
& \leq \frac{2^{n}}{\rho}  \int_{B_{1-\rho}\setminus B_{1-3\rho/2}} u_\e \, dx  \leq c \rho^{-1} \int_{B_{1-\rho}} u_\e\, dx. \label{thm:nondeg:Step4-1}
\end{align}
Furthermore, if $h$ is the harmonic function in $B_{1-3\rho/2}$ whose boundary values on $\de B_{1-3\rho/2}$ are given by $u_\e$, we can estimate $\sup_{B_{1-2\rho}} u_\e$ via the maximum principle and the Poisson representation formula in $B_{1-3\rho/2}$: 
\begin{align}\label{thm:nondeg:Step4-2}
\sup_{B_{1-2\rho}} u_\e \leq \sup_{B_{1-2\rho}} h \leq c \rho^{1-n} \int_{\de B_{1-3\rho/2}} u_\e \, d\H^{n-1}. 
\end{align}
Now, the combination of \eqref{thm:nondeg:Step4-2}, \eqref{thm:nondeg:Step4-1} and the estimate \eqref{thm:nondeg:Step3main} from Step 3 yields
\begin{equation}\label{thm:nondeg:Step4-3}
\sup_{B_{1-2\rho}} u_\e \leq C_4 \rho^{-n-1} \s^2. 
\end{equation}

\noindent \emph{Step 5.} In this ultimate step we perform a standard iteration that produces a contradiction if $\s = \sup_{B_1} u_\e$ is too small. Denote
\[
\s_r:= \frac{\sup_{B_r} u_\e}{r}.
\]
The final estimate of Step 4 implies that for $\rho\in (0,1/4]$
\begin{equation}\label{thm:nondeg:fund}
\s_{1-2\rho}  = (1-2\rho)^{-1} \sup_{B_{1-2\rho}} u_\e \leq C_5 \rho^{-n-1} \s_1^{2}.
\end{equation}
Since for $r\in (0,1)$ the blow-up 
$$
\tilde{u}_{\e/r}(x):= u_\e(rx)/r, \quad \text{for } x\in B_1,
$$
is a nonnegative solution of $\D u = f_{\e/r}(u)$ in $B_1$ and satisfies the hypotheses \eqref{thm:nondeg:cond2}-\eqref{thm:nondeg:cond1}, under which \eqref{thm:nondeg:fund} was derived, we obtain, after rescaling, that for $0<r< R \leq 1$,
\begin{equation*}
\sigma_{r} \leq C_5 \left(\frac{R-r}{2R}\right)^{-n-1} \s_R^{2}\leq \tilde{C}_5 (R-r)^{-n-1} \s_R^{2} \quad \text{as long as} \quad 2\rho := \frac{R-r}{R} \leq 1/2.
\end{equation*}
In particular, we have that
\begin{equation}\label{thm:nondeg:itstep}
\sigma_{r} \leq C_6 (R-r)^{-n-1} \s_R^{2} \quad \text{as long as} \quad \frac{1}{2}\leq r<R\leq 1.
\end{equation}
Setting $r_{0}=1$ and defining $r_m=r_{m-1} - 2^{-m-1}$ iteratively for $m\in \N$, we see that $1/2< r_m < r_{m-1} \leq 1$, hence we are allowed to iterate \eqref{thm:nondeg:itstep}:
\begin{equation}\label{thm:nondeg:seq}
\s_{r_{m}} \leq C 2^{m(n+1)} \s_{r_{m-1}}^{2}, \quad m\in \N.
\end{equation}
We claim that \eqref{thm:nondeg:seq} implies that if $\s_1 \leq \mu=\mu(n,\k, f)$ is small enough, then
\begin{equation}\label{thm:nondeg:iter}
\s_{r_m} \leq \s_1 \g^{-m} \quad \text{for } m\in \{0\}\cup \N,
\end{equation}
for some constant $\g=\g(n)>1$. Obviously, \eqref{thm:nondeg:iter} is true for $m=0$, and assume it is true for index $m-1$. Using \eqref{thm:nondeg:seq}, we get that
\begin{align*}
\s_{r_m} \leq C 2^{m(n+1)} \s_1^{2} \g^{-2(m-1)} = (\s_1 \g^{-m})(2^{n+1} \g^{-1})^m (\s_1 C \g^{2}) \leq \s_1 \g^{-m},
\end{align*}
provided we choose $\g = 2^{(n+1)}>1$ and $\s_1\leq \mu=\mu(n,\k, f)$ where $\mu C \g^{2}  = 1$. However, \eqref{thm:nondeg:iter} leads to a contradiction, because for sufficiently large $m$
\[
\sup_{B_{1/2}} u_\e \leq \sup_{B_{r_m}} u_\e  = r_m \s_{r_m} \leq \s_{r_m} < \t \e.
\]
We conclude that $\s_1 > \mu$. 

\end{proof}
As a corollary to Lemma \ref{thm:nondeg}, we get that solutions $u_\e$ of \eqref{eq:SLe} grow linearly away from points of the interface $F_\e^\theta(u_\e)$, whenever $u_\e$ possesses the density property $\mathcal{D}(\k, L)$ of Definition \ref{def:DKL}.

\begin{coro}\label{coro:nondeg} Let $u_\e\in C^2(B_1)\cap C(\overline{B_1})$ be a nonnegative solution of \eqref{eq:SLe} in $B_1$ that satisfies the Lipschitz estimate \eqref{prop:Lip:eq}. Let $\k\in (0,1)$, $L>0$ and $\theta\in (0,\tau]$. If the interface of $u_\e$ has the density property $\mathcal{D}(\k,L)$, then for some positive constants $c=c(n,\k, f)$ and $M=M(n,\theta,f)$,
\begin{equation}\label{coro:nondeg:eq}
\sup_{B_r(p)}u_\e \geq c r \qquad \forall p\in F_\e^\theta(u_\e)  \text{ and }~ \forall r\geq 2\max(L,M)\e, \text{ such that } B_r(p)\subset B_1. 
\end{equation}
\end{coro}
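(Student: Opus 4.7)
The plan is to split the argument based on where the point $p \in F_\e^\theta(u_\e)$ sits within the transition layer. If $\tau\e \leq u_\e(p) \leq T\e$, I would apply Lemma \ref{thm:nondeg} at $p$ after a scaling. Otherwise, if $\theta\e \leq u_\e(p) < \tau\e$, I would first use Lemma \ref{lem:nondeg0} to migrate the base point to the upper part of the transition layer and then reduce to the first case.

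For the first case, consider the rescaled solution $w(y):= u_\e(p + ry)/r$ on $B_1$. A direct computation shows that $w$ solves \eqref{eq:SLe} with parameter $\tilde{\e}:= \e/r$. The hypotheses of Lemma \ref{thm:nondeg} are then verified as follows: the inequality $w(0) \geq \tau\tilde{\e}$ is immediate from $u_\e(p)\geq \tau \e$; the density hypothesis $|\{w\leq (\tau/4)\tilde{\e}\}\cap B_{1/2}|\geq \k|B_{1/2}|$ translates, after undoing the rescaling, into the $\mathcal{D}(\k,L)$ estimate applied at $p$ with radius $r\geq L\e$, which is valid since $r\geq 2\max(L,M)\e$ and $B_r(p)\subset B_1$; the universal Lipschitz bound for $w$ in $B_1$ follows from Proposition \ref{prop:Lip} applied to $u_\e$ inside $B_r(p)$. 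The conclusion $\sup_{B_1} w \geq \mu$ then rescales to $\sup_{B_r(p)} u_\e \geq \mu r$.

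For the second case, where $\theta\e\leq u_\e(p)<\tau\e$, I would set $M=M(n,\theta,f)$ to be the constant produced by Lemma \ref{lem:nondeg0} applied with $\theta$ as given. Since $B_{M\e}(p)\subset B_r(p)\subset B_1$ (recall $r\geq 2M\e$), that lemma yields a point in $\overline{B_{M\e}(p)}$ where $u_\e$ exceeds $\tau\e$. By continuity of $u_\e$ and since $u_\e(p)<\tau\e$, the intermediate value property produces a point $q\in \overline{B_{M\e}(p)}$ with $u_\e(q)=\tau\e$; in particular $q\in F_\e^\tau(u_\e)$. I would then apply the first case at $q$ with radius $r':=r/2$: the inclusion $B_{r/2}(q)\subset B_r(p)\subset B_1$ holds because $|q-p|\leq M\e \leq r/2$, and the condition $r/2\geq L\e$ required by the $\mathcal{D}(\k,L)$ density hypothesis is guaranteed by $r\geq 2L\e$. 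This gives $\sup_{B_r(p)}u_\e \geq \sup_{B_{r/2}(q)} u_\e \geq \mu r/2$, so setting $c:=\mu/2$ handles both cases uniformly.

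The only subtle point is bookkeeping with the constants: the $M$ that ends up in the conclusion must be at least the $M$ from Lemma \ref{lem:nondeg0} (so that $q$ can be reached within the ball $B_r(p)$), and the factor $2\max(L,M)$ in the hypothesis on $r$ is engineered precisely so that after shrinking the radius from $r$ to $r/2$ in the migration step, one still has $r/2\geq L\e$ to legitimately invoke the density property at the new center $q$. No genuinely hard obstacle arises — everything is a routine rescaling plus continuity argument — but one has to be careful not to circularly invoke the density property for radii smaller than $L\e$.
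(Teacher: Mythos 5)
Your proposal is correct and follows essentially the same two-case decomposition as the paper: rescaling to invoke Lemma \ref{thm:nondeg} directly when $p\in F_\e^\tau$, and using Lemma \ref{lem:nondeg0} to migrate to a nearby point of $F_\e^\tau$ when $\theta\e\leq u_\e(p)<\tau\e$, with the constant $2\max(L,M)$ arranged so that the migration still leaves room for the density hypothesis. Your explicit appeal to the intermediate value theorem to guarantee the new base point $q$ lies in $F_\e^\tau$ (so that $\mathcal{D}(\k,L)$ genuinely applies there) makes precise a step the paper leaves slightly implicit; note also that you need not invoke Proposition \ref{prop:Lip} in Case 1, since the Lipschitz bound is already a hypothesis of the corollary.
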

\begin{proof}
Assume that $B_r(p)\subset B_1$ with a radius 
\begin{equation}\label{coro:nondeg:eq0}
r\geq 2 L_0 \e, 
\end{equation}
where $L_0\geq L$. We will analyze the following two cases, in the process of which we will determine the size of $L_0$. 

\noindent\emph{Case 1.} Assume first that we are located at a point $p\in F_\e^\t(u_\e)$. After rescaling at $p$,
\[
\tilde{u}_{\e/r}(x):=u_\e(p+rx)/r \quad \text{for } x\in B_1,
\]
we see that $\tilde{u}_{\e/r}$ satisfies all the hypotheses of Theorem \ref{thm:nondeg}. Therefore, $$\sup_{B_1}\tilde{u}_{\e/r} \geq \mu=\mu(n,\k, f)$$ and thus $\sup_{B_r(p)} u_\e \geq \mu r$.

\noindent\emph{Case 2.} Suppose now that $p\in B_1$ is such that $\theta \e \leq u_\e(p) < \tau \e$. Lemma \ref{lem:nondeg0} informs us that for some $M=M(\theta, n, f)$
\begin{equation}\label{coro:nondeg:eq1}
\sup_{B_{M\e}(p)} u_\e \geq \tau \e.
\end{equation}
Noting that \eqref{coro:nondeg:eq1} says
\[
M\e \leq (M/L_0) r/2,
\]
let us choose $L_0=\max(M,L)$.  In this way, \eqref{coro:nondeg:eq1} implies the existence of a point $\tilde{p}\in \overline{B_{r/2}(p)}$, where $u_\e(\tilde{p})\geq \tau \e$. Applying now the rescaling argument from Case 1 to $u_\e$ in $B_{r/2}(\tilde{p})\subset B_r(p)$ (which is permitted, as $r/2 \geq L\e$ implies that $\tilde{u}_{2\e/r}(x):=u_\e(\tilde{p}+x r/2)/(r/2)$ satisfies the hypotheses of Theorem \ref{thm:nondeg}), we obtain
$$
\sup_{B_r(p)} u_\e \geq \sup_{B_{r/2}(\tilde{p})} u_\e  \geq \mu r/2 = (\mu/2) r. 
$$


\end{proof}

In the next lemma, we will use the $\mathcal{D}(\k,L)$ property to obtain an important \emph{distance} nondegeneracy estimate in the spirit of \cite[Lemma 5.1]{LedWolanski1998}. We introduce the notation
\begin{equation}
\O^+(u_\e):=\{x\in \O: u_\e(x)>T\e\}
\end{equation}
to denote the $T\e$-superlevel set of $u_\e$ in a domain $\O$.

\begin{lemma}\label{prop:distnondeg} Let $u_\e\in C^2(B_1)$ be a solution of \eqref{eq:SLe} in $B_1\subset \R^n$ and assume that for some positive constants   $C_1, C_2, \k$ and $L$, it satisfies
\begin{itemize}
\item the uniform Lipschitz estimate: $\|\n u_\e\|_{L^\infty(B_1)} \leq C_1$; 
\item the uniform nondegeneracy condition: $\sup_{B_r(p)} u_\e \geq C_2 r$ whenever $p\in \de \big(B_1^+(u_\e)\big) \cap B_1$, $B_{r}(p)\subseteq B_1$ and $r\geq 2L\e$;
\item the density property $\mathcal{D}(\k, L)$ of Definition \ref{def:DKL}.
\end{itemize}
 Then there exist positive constants $\e_0, \mu_0$, such that for every $\e\leq \e_0$, we have
\begin{equation}
u_\e(y)\geq \mu_0 \, d(y, F_\e^\t) \quad \text{for every } y\in B_{1/2}^+(u_\e), \quad \text{with}\quad d(y, F_\e^\t)\leq 1/4.
\end{equation}
\end{lemma}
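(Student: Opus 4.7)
The plan is to argue by contradiction via a blow-up, in the style of \cite[Lemma~5.1]{LedWolanski1998}. First dispose of the trivial regime $d(y, F_\e^\t) \le C\e$: there $u_\e(y) > T\e \ge (T/C) d(y, F_\e^\t)$ gives the bound immediately, so we may assume $\e/d(y, F_\e^\t)$ is as small as we like. Suppose the claim fails, producing sequences $\e_k\downarrow 0$, $y_k \in B_{1/2}^+(u_{\e_k})$ with $d_k := d(y_k, F_{\e_k}^\t) \in (0, 1/4]$ and $\eta_k := u_{\e_k}(y_k)/d_k \to 0$; by the above reduction, $\tilde\e_k := \e_k/d_k \to 0$. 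Set
\[
v_k(x) := u_{\e_k}(y_k + d_k x)/d_k, \qquad x \in B_{R_k}(0),\quad R_k := (1-|y_k|)/d_k \ge 2.
\]
Each $v_k$ solves \eqref{eq:SLe} with parameter $\tilde\e_k$, has the same Lipschitz constant $C_1$, the rescaled boundary-nondegeneracy of Corollary~\ref{coro:nondeg}, and the density property $\mathcal{D}(\k,L)$. Crucially, $B_{d_k}(y_k)\subset \{u_{\e_k}>T\e_k\}$ translates to $v_k$ being harmonic and $>T\tilde\e_k$ on $B_1(0)$, with $v_k(0)=\eta_k \to 0$.

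Iterating Harnack's inequality for positive harmonic functions on concentric balls inside $B_1(0)$ gives $\sup_{B_\theta(0)} v_k \le C_\theta \eta_k \to 0$ for every $\theta<1$. By uniform Lipschitz continuity and Arzel\`a--Ascoli, pass to a subsequence $v_k \to v$ locally uniformly on $B_{R_*}(0)$, where $R_* := \liminf R_k \ge 2$; the limit $v\ge 0$ is Lipschitz and vanishes identically on $\overline{B_1(0)}$. Pick $z_k \in \partial B_1(0) \cap F_{\tilde\e_k}^\t(v_k)$ (nonempty by the definition of $d_k$) and extract $z_k\to z_\infty\in\partial B_1(0)$. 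Applying Lemma~\ref{thm:nondeg} to $v_k$ recentered at $z_k$ on the unit ball $B_1(z_k)\subset B_{R_k}(0)$ (valid for large $k$ since all hypotheses hold: $v_k(z_k)\ge \t\tilde\e_k$, density with constant $\k$, Lipschitz) yields $\sup_{B_1(z_k)} v_k \ge \mu(n,\k,f)>0$, and hence in the limit
\[
\sup_{\overline{B_1(z_\infty)}} v \ge \mu.
\]
In particular $v$ takes a value $\ge \mu$ at some point $w_\infty \in \overline{B_1(z_\infty)}\setminus\overline{B_1(0)}$.

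To close the argument, one must turn this incompatibility into a genuine contradiction. The naive Lipschitz bound $v(x)\le C_1 \mathrm{dist}(x,\overline{B_1(0)})$ alone only gives $v\le C_1$ on $\overline{B_1(z_\infty)}$, so it is not enough when $\mu\le C_1$. The refined strategy is to re-run the Poincar\'e--Sobolev/iteration scheme of Steps~2--5 in the proof of Lemma~\ref{thm:nondeg} (which produced the estimate \eqref{thm:nondeg:Step4-3}: $\sup_{B_{1-2\rho}} u_\e \le C\rho^{-n-1}\s^2$) at the level of $v_k$ on balls $B_r(z_k)$ with $r\in[L\tilde\e_k, 1-\delta]$. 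The density property of $v_k$ with constant $\k$ supplies the Poincar\'e--Sobolev constant uniformly in $k$, while the Harnack decay $\sup_{B_{1-\delta}(0)} v_k \le C_\delta \eta_k \to 0$ propagates, via the Lipschitz bound across the thin shell $B_{1}(0)\setminus B_{1-\delta}(0)$, to pin $\sup_{B_{r}(z_k)} v_k$ below an $o_k(1)$ threshold for a suitably chosen $r$ bounded away from $0$. Feeding this smallness into the quadratic iteration as in Step~5 of Lemma~\ref{thm:nondeg} forces $\sup_{B_{1/2}(z_k)} v_k \to 0$, contradicting the lower bound $\sup_{B_1(z_k)} v_k \ge \mu$ just obtained.

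The main obstacle is precisely this last step: calibrating the intermediate scale $r$ so that the Harnack decay on $B_1(0)$ provides an initial supremum small enough to start the quadratic iteration, \emph{and} so that the iteration contracts before leaving the region where both the density hypothesis and the harmonicity provided by $B_1(0)$ are simultaneously available. One expects this to go through by applying Lemma~\ref{thm:nondeg} on balls $B_r(z_k)$ with $r$ chosen depending on $\k$, $n$, and $f$ but independent of $k$, so that the rescaled $v_k$ on those balls already satisfies the smallness threshold triggering the iteration. The case $d_k\to d_*>0$ (so $R_k$ stays bounded) is handled analogously, replacing the rescaling by the direct limit $u_{\e_k}\to u$ on $B_1$, using that $u\equiv 0$ on $B_{d_*}(y_*)$ by Harnack and then running the same contradiction on the unit scale.
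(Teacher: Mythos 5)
Your blow-up set-up (the reduction to $\e/d(y,F_\e^\tau)\to 0$, the rescaled sequence $v_k$, the Harnack decay inside $B_1(0)$, the Arzel\`a--Ascoli limit $v$, and the application of Lemma~\ref{thm:nondeg} at a nearest interface point $z_k$) is sound up to the point where you conclude $v\equiv 0$ on $\overline{B_1(0)}$ and $\sup_{\overline{B_1(z_\infty)}} v\ge\mu$. You are right that this alone is not a contradiction, but your proposed mechanism for closing does not work.

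The gap is twofold. First, you want $\sup_{B_r(z_k)}v_k = o_k(1)$ for some $r$ bounded away from $0$; but any such $B_r(z_k)$ contains a full cap of points at distance comparable to $r$ outside $\overline{B_1(0)}$, where the only available bound is $v_k\le C_1\,\mathrm{dist}(\cdot,\overline{B_1(0)})+o_k(1)=O(r)+o_k(1)$, not $o_k(1)$. Second, even if one had smallness at some intermediate scale, the quadratic iteration of Step~5 of Lemma~\ref{thm:nondeg} (the inequality $\sigma_{r_m}\le C\,2^{m(n+1)}\sigma_{r_{m-1}}^2$ with $r_m$ strictly decreasing) transfers a sup bound from a \emph{larger} ball down to \emph{smaller} balls; it cannot propagate smallness outward from a thin shell near $\de B_1(0)$ to $B_{1/2}(z_k)$. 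So the final step, as sketched, would not yield the contradiction.

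The paper's proof recenters the blow-up at the nearest interface point $z_k$ rather than at $y_k$, so that the limit $v_\infty$ satisfies $v_\infty(0)=0$, is nondegenerate at all scales at $0$, and vanishes identically on the tangent ball $B_1(q_\infty)$, $q_\infty=\lim(y_k-z_k)/d_k\in\de B_1$. The contradiction is then a \emph{density comparison} at $0\in\de\{v_\infty>0\}$. On one side, the asymptotic development of positive harmonic functions at a free boundary point with a tangent ball (\cite[Lemma 11.17]{CafSalsa}), combined with the nondegeneracy of $v_\infty$, forces $\lim_{r\to 0}|\{v_\infty>0\}\cap B_r|/|B_r|=1/2$. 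On the other side, the $\mathcal{D}(\k,L)$ property passed through the limit gives zero-phase density at least $\k$, and since $\{v_k\le(\tau/4)\e_k/d_k\}$ is disjoint from $B_1(q_k)$ (where $v_k>T\e_k/d_k$), this adds essentially disjointly to the density $1/2$ contributed by the tangent ball, yielding $\liminf_{r\to 0}|\{v_\infty=0\}\cap B_r|/|B_r|\ge\k+1/2$. These are incompatible for $\k>0$. This density comparison, exploiting the tangent-ball structure from the recentered rescaling, is the essential idea missing from your closing step.
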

\begin{proof}
We will argue by contradiction. Assume that the statement of the proposition is false and we have a sequence of counterexamples $u_{\e_k}$ with $\e_k\downarrow 0$, for each of which there exists a point $y_k\in B_{1/2}^+(u_{\e_k})$ with $d_k:=d(y_k, F_{\e_k}^\t)\leq 1/4$, where 
\begin{equation*}
T \e_k \leq u_{\e_k}(y_k)\leq  \frac{1}{k} d_k.
\end{equation*}
Let $z_k\in F_\e(u_\e)\cap \de B_{d_k}(y_k)$ realize the distance $d_k$ between $y_k$ and $F_{\e_k}^\t$. Taking into account that $B_{2 d_k}(z_k) \subset B_{1}$, we may define the rescaled solutions
\[
\tilde{u}_{\e_k/d_k}(x):= u_{\e_k}(z_k+d_k x)/d_k, \quad \text{for } x\in B_2.
\]
Then $v_k:=\tilde{u}_{\e_k/d_k}$ are uniformly Lipschitz continuous in $\overline{B_2}$ and fulfill:
\begin{align}
& v_k(0) = T\e_k/d_k \quad \text{and} \quad \sup_{B_r(0)} v_k \geq C_2 r \quad \text{for all } r\in (2L \e_k/d_k,2); \label{prop:distnondeg:prop1} \\
& v_k \text{ is positive and harmonic in } B_1(q_k), \text{ where } q_k:=(y_k-z_k)/d_k \in \de B_1; \label{prop:distnondeg:prop2} \\ &  v_k(q_k) = u_{\e_k}(y_k)/d_k \leq 1/k; \label{prop:distnondeg:prop3} \\
& |\{v_k\leq (\tau/4) \e_k/d_k\}\cap B_{r}|\geq \k |B_{r}| \quad \text{for all }r\geq 2L(\e_k/d_k). \label{prop:distnondeg:prop4} \
\end{align}
Hence, up to taking subsequences, we can assume that the points $q_k\in \de B_1$ converge to some $q_\infty \in \de B_1$ and that $v_k$ converges uniformly in $\overline{B_2}$ to a Lipschitz continuous function $v_\infty\in C(B_1)$ that is harmonic in its positive phase $\O:=\{x\in B_2: v_\infty(x)>0\}$. Furthermore, as $$\e_\k / d_k \leq 1/(kT) \to 0,$$ we see by \eqref{prop:distnondeg:prop1} that $v_\infty(0)=0$ and that $v_\infty$ is nondegenerate at all scales at $0$: 
$$\sup_{B_r} v_\infty \geq C_2 r, \quad \text{for all }r\in (0,2),$$ 
so that $0\in \de \O$. Because of \eqref{prop:distnondeg:prop2}, we deduce that $v_\infty\geq 0$ is harmonic in $B_1(q_\infty)$. However,  \eqref{prop:distnondeg:prop3} means that $v_\infty(q_\infty) = 0$ and the maximum principle yields $v_\infty\equiv 0$ in $B_1(q_\infty)$. Hence, $B_1(q_\infty)\subseteq \O^c$ is a ball touching $0\in \de \O$ from the zero phase of $v_\infty$. Hence, the asymptotic development result \cite[Lemma 11.17]{CafSalsa} for positive harmonic functions at (left) regular points, in combination with the nondegeneracy of $v_\infty$, entails that for some $\beta >0$,
\begin{equation*}
v_\infty(x) = \beta \langle x, -q_{\infty} \rangle + o(|x|) \quad \text{near } 0, \quad \text{in every nontangential region of } \O.
\end{equation*}
In particular, this means that
\begin{equation}\label{prop:distnondeg:dens}
\lim_{r\to 0}  \frac{|\{v_\infty > 0\}\cap B_r|}{|B_r|} = \frac{1}{2}.
\end{equation}
On the other hand, since for every $\d>0$, the uniform convergence of $v_k$ to $v_\infty$ in $B_2$ implies
\[
\{x\in B_2: v_\infty(x)\leq \d\} \supset \{x\in B_2: v_k(x)\leq (\tau/4) \e_k/d_k\} \cup B_1(q_\infty) \quad \text{for all } k \text{ large enough},
\]
we obtain from the monotone convergence theorem and \eqref{prop:distnondeg:prop4} that 
\begin{equation*}
\liminf_{r\to 0} \frac{|\{v_\infty = 0\}\cap B_r|}{|B_r|} = \liminf_{r\to 0}  \lim_{\d \downarrow 0} \frac{|\{v_\infty  \leq \d\}\cap B_r|}{|B_r|} \geq \k + \frac{1}{2}. 
\end{equation*}
The latter contradicts \eqref{prop:distnondeg:dens} for $\k>0$.
\end{proof}

We are now in a position to establish Theorem \ref{thm:nondeg:main}.
\begin{proof}[Proof of Theorem \ref{thm:nondeg:main}]
As $u_\e(0)\leq T\e$, Proposition \ref{prop:Lip} tells us that the uniform Lipschitz bound \eqref{prop:Lip:eq} holds in $B_1$.  For points $p\in F_\e^\theta(u_\e)\cap B_{1/4}$, the nondegeneracy bound \eqref{thm:nondeg:maineq} thus follows from  \eqref{coro:nondeg:eq} of Corollary \ref{coro:nondeg}. 

Assume now that $p\in B_{1/4}^+(u_\e)=\{x\in B_{1/4}: u_\e(x)>T\e\}$, $r\geq 2L\e$ and $B_{r}(p)\subset B_1$. Since $$d(p, F_\e^\t(u_\e))\leq d(p,0)\leq 1/4,$$ Lemma \ref{prop:distnondeg} states that as long as $\e\leq \e_0=\e_0(n,\k, f, L)$ is small enough, 
\begin{equation}\label{thm:nondeg:main:eq1}
u_\e(p) \geq \mu_0 \, d(p, F_\e^\t),
\end{equation}
for some constant $\mu_0=\mu_0(n,\k,L, f)$. If $r\leq 2 d(p, F_\e^\t)$, then \eqref{thm:nondeg:main:eq1} directly implies that
\[
\sup_{B_r(p)} u_\e \geq u_\e(p) \geq \mu_0 \, d(p, F_\e^\t) \geq (\mu_0/2) r.
\]
In case that $r>2 d(p, F_\e^\t)$, let $\tilde{p}$ be the point in $F_\e^\t\cap B_{1/2}$ that realizes the distance $d(p, F_\e^\t)$. Because we have assumed that $r/2\geq L\e$, we can rescale $u_\e$ in $B_{r/2}(\tilde{p})$ as in the proof of Corollary \ref{coro:nondeg} and apply Theorem \ref{thm:nondeg} to get $\sup_{B_{r/2}(\tilde{p})} u_\e \geq \mu r/2.$
Now, the fact that $r>2 |p-\tilde{p}|$ implies $B_r(p)\supseteq B_{r/2}(\tilde{p})$, so that
\[
\sup_{B_r(p)}u_\e \geq \sup_{B_{r/2}(\tilde{p})} u_\e \geq (\mu/2)r.
\]
\end{proof}

\section{Limits of solutions of \eqref{eq:SLe} as $\e \downarrow 0$}\label{sec:limsol}

We begin this section by recalling the notion of \emph{inner-stationary} solutions of \eqref{eq:SLe}, resp.\ \eqref{FBP}, which are defined as the critical points of $I_\e$ (resp.\ $I_0$) with respect to inner domain deformations.

\begin{definition}\label{def:innerstat} A function $u =u_\e\in H^1_\text{loc}(\O)$ is an \emph{inner-stationary solution} of \eqref{eq:SLe} (resp.\ \eqref{FBP} when $\e=0$) in a domain $\O\subseteq \R^n$  if the first inner variation
\[
\d I_\e(u, \O)[X] = 0 \quad \text{for all } X\in C^\infty_c(\O;\R^n).
\]
\end{definition}

In Proposition \ref{prop:vars} of Appendix \ref{sec:derivationvars} we will compute explicit formulas for the first and second inner variations of $I_\e$, $\e\geq 0$, that hold in the general setting of an oriented Riemannnian manifold. For our Euclidean case they read
\begin{align}
\d I_\e(u, \O)[X] &= \int_\O \left( (|\n u|^2 + \F_\e(u))\text{div}\, X + L_X \bar{\d} (du, du) \right) \,dx; \label{eq:firstvar}\\
\d^2 I_\e(u, \O)[X] &= \int_\O \left( (|\n u|^2 + \F_\e(u))\text{div}(X \text{div}X )+ 2 (\text{div}X) L_X \bar{\d} (du, du) + L_X^2 \bar{\d}(du, du) \right) dx.  \label{eq:secvar}
\end{align}
Here $\bar{\d}$ is the contravariant $(2,0)$-tensor $\bar{\d}=\sum_{ij} \d^{ij} \de_{x_i} \otimes \de_{x_j}$, which gives the Euclidean induced inner product on covectors, and $L_X$ denotes the \emph{Lie derivative}. In standard coordinates, the tensors $L_X \bar{\d}$ and $L_X^2 \bar{\d}$ have components (see the calculations preceding \eqref{ap:eqnsLX1}-\eqref{ap:eqnsLX2}):
\begin{align*}
(L_X \bar{\d})^{ij} &= -(\de_j X^i + \de_i X^j); \\
(L_X^2 \bar{\d})^{ij} &= -X^k \de_k (\de_j X^i + \de_i X^j) + (\de_j X^k + \de_k X^j)\de_k X^i +  (\de_i X^k + \de_k X^i)\de_k X^j,
\end{align*}
where we have adopted the standard summation convention over repeated indices. 


It is worth mentioning the well known fact that if $u_\e$ is a classical solution to \eqref{eq:SLe}, then it is also an inner-stationary solution of \eqref{eq:SLe} (see Proposition \ref{prop:crit1crit2}). The benefit of working with these weak solutions is that they behave well under taking limits. The main goal of this section is to establish the convergence result for solutions to the one-phase singular perturbation problem \eqref{eq:SLe}, presented next. Its proof uses classical, well known arguments, with the only novelty being the argument behind the important $L^1_\text{loc}$ convergence $\F_\e(u_\e)\to \F_0(u)$.

\begin{prop}\label{prop:limsol1} Let $\{u_\e\}_\e$ be a family of solutions of \eqref{eq:SLe} in a domain $\O\subset \R^n$, that satisfy
\begin{itemize}
\item (Uniform Lipschitz continuity) There exists a constant $C$, such that $\|\nabla u_\e\|_{L^{\infty}(\O)} \leq C$;
\item (Uniform nondegeneracy) For every $\theta\in (0, T]$, there exist positive constants $\e_0$, $c$ and $\l$, such that if $\e\leq \e_0$, then $\sup_{B_r(x)} u_\e \geq cr$ for every $B_r(x) \subseteq
\O$, centered at a point $x\in \{u_\e \geq \theta \e\}$, with $r\geq \l\e$.
\end{itemize}
Then any limit $u \in H^1_{\text{loc}}(\O) \cap C(\O)$ of a uniformly
convergent on compacts sequence $u_k:=u_{\e_k} \to u$, as $\e_k\to 0$, satisfies
\begin{enumerate}[(a)]
\item $u$ is harmonic in $\O^+_0(u)$;
\item $\{u_k\geq \theta \e_k \} \to \overline{\{u>0\}}$ locally in the Hausdorff distance, for all $\theta>0$;
\item $F_{\e_k}^\theta(u_k) \to F_0(u)$ locally in the Hausdorff distance, for all $\theta\in (0, \t]$;
\item $\nabla u_k \to \nabla u$  in $L^2_\text{loc}(\O)$.
\item $\mathcal{F}_{\e_k}(u_{k}) \to \F_0(u) = 1_{\{u>0\}}$ in $L^1_{\text{loc}}(\O)$;
\end{enumerate}
Moreover, $u$ is a Lipschitz continuous, inner-stationary solution of \eqref{FBP} that is nondegenerate:
\begin{equation}\label{prop:limsol1:nondeg}
\sup_{B_r(x)}u \geq \bar{c} r \quad \text{for every } x\in \overline{\O^+_0(u)} \quad \text{and all }  r>0, \text{ such that } B_r(x)\subset \O.
\end{equation}
for some constant $\bar{c}>0$.

\end{prop}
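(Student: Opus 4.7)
The plan is to exploit Arzelà-Ascoli via the uniform Lipschitz bound to work with the sequence $u_k=u_{\e_k}$ converging uniformly on compacts to a Lipschitz $u$, and then to propagate the geometric and analytic properties of the $u_k$ to $u$ in stages. For (a), any ball $B\Subset \O_0^+(u)$ has $u\geq \alpha>0$ on $B$ by continuity, so uniform convergence gives $u_k>T\e_k$ on $B$ for large $k$, whence $\D u_k=0$ on $B$ and $u$ is harmonic in the limit. The nondegeneracy bound \eqref{prop:limsol1:nondeg} for $u$ transfers in the same spirit: at any $x\in \overline{\{u>0\}}$, pick $y$ nearby with $u(y)>0$ so that $u_k(y)>\theta\e_k$ eventually, apply the uniform nondegeneracy hypothesis to $u_k$ at $y$ on a slightly smaller scale, and let $y\to x$.

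For Hausdorff convergence (b)-(c), both directions are elementary once nondegeneracy is in hand. If $x_k \in F_{\e_k}^\theta(u_k)$ (or in $\{u_k\geq \theta\e_k\}$) converges to $x$, uniform convergence forces $u(x)=0$ (resp.\ $u(x)\geq 0$), while passing the uniform nondegeneracy $\sup_{B_r(x_k)}u_k\geq cr$ to the limit places $x$ in $\overline{\{u>0\}}$. Conversely, any $x\in\overline{\{u>0\}}$ is approached by $y$ with $u(y)>0$, hence $u_k(y)>T\e_k$ eventually. For (c), if $x\in F_0(u)$, then the nondegeneracy of $u$ produces $y\in B_r(x)$ with $u(y)>0$; since $u_k(x)\to 0$ and $u_k(y)>T\e_k$ for large $k$, the intermediate value theorem applied to $u_k$ along $[x,y]$ produces a point of $F_{\e_k}^\theta$ inside $B_r(x)$.

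Next I address the two convergence statements. A standard argument gives $|F_0(u)|=0$: the nondegeneracy and Lipschitz bounds for $u$ imply that at each $x_0\in F_0(u)$ some ball of radius comparable to $r$ is contained in $B_{2r}(x_0)\cap\{u>0\}$, so $F_0(u)$ consists of non-density points of $\{u=0\}$. Statement (e) then follows because $|\F_{\e_k}(u_k)-1_{\{u>0\}}|\leq 1$ vanishes away from $(\{u_k\geq T\e_k\}\triangle\{u>0\})\cup\{0<u_k<T\e_k\}$, and all three of these sets collapse onto $F_0(u)$ by the Hausdorff convergence. For (d), weak $H^1_{\text{loc}}$ convergence is immediate from the uniform Lipschitz bound, and testing $\D u_k=f_{\e_k}(u_k)$ against $\phi^2 u_k$ produces
\begin{equation*}
\int \phi^2 |\n u_k|^2 \,dx = -2\int \phi u_k \n u_k\cdot \n\phi\, dx - \int \phi^2 u_k f_{\e_k}(u_k)\,dx.
\end{equation*}
The first right-hand term converges to $-2\int \phi u\n u\cdot\n\phi$ by weak convergence, and the analogous identity $\int\phi^2|\n u|^2=-2\int \phi u\n u\cdot\n\phi$ is obtained by testing $\D u=0$ against $\phi^2 u\in H^1_0(\O_0^+(u))$ (valid since $u=0$ on $\de\O_0^+(u)$ in trace sense and $\n u=0$ a.e.\ on $\{u=0\}$). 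The \emph{crux} is showing $\int \phi^2 u_k f_{\e_k}(u_k)\to 0$: on $\O_0^+(u)$ the integrand vanishes eventually since $u_k>T\e_k$, and on any compact $K\subset \text{int}\{u=0\}$ the uniform nondegeneracy at an arbitrarily small threshold $\theta>0$ forces $u_k\leq \theta\e_k$ eventually on $K$ (otherwise $\sup_{B_r}u_k\geq cr$ in a ball where $u\equiv 0$, contradicting uniform convergence), whereupon \eqref{eq:condf} yields the pointwise bound $u_k f_{\e_k}(u_k)=(u_k/\e_k)f(u_k/\e_k)\leq c_0^{-1}\theta^2$, arbitrarily small as $\theta\downarrow 0$.

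Finally, each classical $u_k$ is inner-stationary for $I_{\e_k}$, i.e.\ $\d I_{\e_k}(u_k,\O)[X]=0$ for every $X\in C^\infty_c(\O;\R^n)$. The formula \eqref{eq:firstvar} expresses the first inner variation as an integral whose integrand is linear in $|\n u_k|^2+\F_{\e_k}(u_k)$ (coupled to $\text{div}\,X$) plus a quadratic form in $\n u_k$ determined entirely by $X$. The strong $L^2_{\text{loc}}$ gradient convergence from (d) and the $L^1_{\text{loc}}$ potential convergence from (e) allow me to pass to the limit, yielding $\d I_0(u,\O)[X]=0$ and thus certifying $u$ as an inner-stationary solution of \eqref{FBP}.
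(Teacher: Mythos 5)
Your overall route mirrors the paper's: harmonicity and nondegeneracy of $u$ from locally uniform convergence, Hausdorff convergence of level sets from the Lipschitz and nondegeneracy bounds, $|F_0(u)|=0$ from interior positive density, and passage to the limit in the first inner variation formula via (d) and (e). Where you genuinely depart is in (d): the paper cites \cite{CafLW97-1} for the strong $L^2_{\text{loc}}$ gradient convergence, whereas you give a self-contained argument by testing $\D u_k = f_{\e_k}(u_k)$ against $\phi^2 u_k$; the crucial estimate $\int\phi^2 u_k f_{\e_k}(u_k)\to 0$, which uses the uniform nondegeneracy to force $u_k\leq\theta\e_k$ on compacts in $\text{int}\{u=0\}$ and then $u_kf_{\e_k}(u_k)\leq c_0^{-1}\theta^2$ from \eqref{eq:condf}, is correct and a nice addition.

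There is, however, a genuine gap in your justification of (e). You assert that the support of $|\F_{\e_k}(u_k)-\chrc{\{u>0\}}|$ is contained in $(\{u_k\geq T\e_k\}\triangle\{u>0\})\cup\{0<u_k<T\e_k\}$ and that ``all three of these sets collapse onto $F_0(u)$.'' But $\{0<u_k<T\e_k\}$ does \emph{not} collapse onto $F_0(u)$: nonnegative classical solutions of \eqref{eq:SLe} are strictly positive, and on any compact $K\subset\text{int}\{u=0\}$ one has $u_k\to 0$ uniformly, so $K\subset\{0<u_k<T\e_k\}$ for all large $k$ and the measure $|\{0<u_k<T\e_k\}\cap K|$ stays bounded below by $|K|$. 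The correct argument (and the one the paper gives) splits this set by a small threshold $\theta$: the layer $\{\theta\e_k\leq u_k<T\e_k\}$ \emph{does} collapse onto $F_0(u)$, which has measure zero, while on $\{0<u_k<\theta\e_k\}$ the integrand $\F_{\e_k}(u_k)\leq\F(\theta)$ is small and $\chrc{\{u>0\}}$ vanishes there up to a set of asymptotically zero measure by Fatou's lemma. You in fact carry out exactly this $\theta$-threshold decomposition inside your proof of (d); it simply needs to be invoked again for (e). (Equivalently, one can recast (e) as a dominated convergence argument, but the pointwise convergence $\F_{\e_k}(u_k(x))\to 0$ for $x\in\text{int}\{u=0\}$ still requires the same $\theta$-threshold observation.) As written, the justification of (e) is incorrect.
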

\begin{proof}
The uniform limit $u$ is clearly harmonic in its positive phase $\O_0^+(u)$ and satisfies the same Lipschitz bound: $\|\n u\|_{L^{\infty}(\O)}\leq C$. Let us show that $u$ possesses the nondegeneracy property \eqref{prop:limsol1:nondeg}. Fix $x_0\in \O^+_0(u)$ and $r>0$ such that $B_r(x_0)\subset \O$. Since $u_k(x_0)\to u(x_0)>0$,  we have for all $k$ large enough $u_k(x_0)\geq T\e_k$. Because $r\geq \l(T) \e_k$ for large $k$ as well, the nondegeneracy property of $u_k$ gives us that $\sup_{B_r(x_0)} u_k \geq \bar{c} r$ for all $k$ large enough, with $\bar{c}=c(T)$. Thus, the uniform convergence yields $\sup_{B_r(x_0)} u \geq \bar{c} r$ and we can conclude by continuity that \eqref{prop:limsol1:nondeg} is valid for all points $x$ in the closure $ \overline{\O^+_0(u)}$. In particular, for every $p\in F_0(u)$ and $B_{r}(p)\subset \O$, there exists a point $q\in \overline{B_{r/2}(p)}$ such that $u(q)\geq (\bar{c}/2) r$, so that by the Lipschitz continuity of $u$, the ball $B_{\nu r}(q)\subseteq \O^+_0(u)\cap  B_r(p)$ for $\nu := \min(\bar{c}/(2C), 1/2)$. Hence,
\[
|\O^+_0(u)\cap B_r(p)| \geq \nu^n |B_r| \quad \text{for all } p\in F_0(u) \text{ and } B_{r}(p)\subset \O,
\]
implying that the set of Lebesgue density points of $F_0(u)$ is empty. Therefore,
\begin{equation}\label{prop:limsol1:dens}
|F_0(u)| = 0.
\end{equation}

The proofs of b) and d) are standard and their proofs can be found in \cite[Lemma 5.3]{AS} and \cite[Lemma 3.1]{CafLW97-1}, respectively, so here  we will focus only on proving c) and e). Fix $\d>0$ and choose a compact subset $K\Subset \O$ such that $d(K, \O^c)>\d$. Denote
\begin{align*}
F_k &:= \{x\in K: \theta \e_k \leq u_\e(x) \leq T\e_k\}, &  F^\d_k:=\{x\in \O: d(x,F_k)<\d\}, \\
F_0 &:= F_0(u)\cap K, & F_0^\d:=\{x\in \O: d(x,F_0)<\d\}.
\end{align*}
In order to establish that $F_0\subseteq F_k^\d$, it suffices to show that for every $x\in F_0$, $B_\d(x)\cap F_k\neq \emptyset$ for all $k$ large enough. Assume not: then for all large $k$ (after possibly taking a subsequence) either $B_\d(x)\subset \{u_k< \theta \e_k\}$, or $B_\d(x)\subset \{u_k>T\e_k \}$. In the first case, we will get by uniform convergence that $u\equiv 0$ in $B_\d(x)$, which is impossible as $x\in F_0$. In the second case, Harnack's inequality implies
\[
\sup_{B_{\d/2}(x)} u_k \leq c(n) u_k(x) \to 0 \quad \text{as } k\to\infty,
\]
so that $u\equiv 0$ in $B_{\d/2}(x)$, which is impossible again. 

To prove that $F_k\subseteq F_0^\d$ for all large $k$, assume by contradiction that there is a sequence of points $\{x_k \in F_k\}$, such that $B_\d(x_k)\cap F_0 = \emptyset$. By  taking a subsequence, we may assume that $x_k\to x_\infty \in K$ and 
$$B_{\d/4}(x_k)\subset B_{\d/2}(x_\infty)\subset B_\d(x_k) \quad \text{for all large } k.$$
By possibly taking a further subsequence, it must be the case that either $B_\d(x_k)\subset \{u = 0\}$, or $B_\d(x_k)\subset \{u>0\}$ for all $k$ large enough. The first scenario is impossible, since by the uniform nondegeneracy property of $u_k$, we have
\[
\sup_{B_{\d/2}(x_\infty)} u_k \geq \sup_{B_{\d/4}(x_k)} u_k \geq c(\theta) \d/4,
\]
so that by uniform convergence, $\sup_{B_{\d/2}(x_\infty)} u \geq  c(\theta) \d/4 >0$, contradicting the fact that  $B_{\d/2}(x_\infty) \subset \{u = 0\}.$ The second scenario doesn't occur either, because by the continuity of $u_k$ and the uniform convergence of $u_k\to u$, we would have
\[
u(x_\infty)=\lim_{k\to\infty} u_k(x_k) = 0,
\]
which would contradict the fact that $B_{\d/2}(x_\infty) \subset \{u>0\}$. The proof of c) is completed. 

Let us now treat the claim in e). For the purpose, we will need the following lemma about the relation between Hausdorff convergence and convergence in measure of compact sets.
\begin{lemma}\label{lem:hausconv}
Let $\{K_k\}$ be a sequence of compact subsets of $\R^n$ that converge in the Hausdorff distance to the compact $K_\infty\subset \R^n$. Then
\begin{equation}\label{lem:hausconv:eq}
 \limsup_{k\to\infty} |K_k| \leq |K_\infty|.
\end{equation}
\end{lemma}
\begin{proof}
Fix $\eps>0$ and let $O\supset K_\infty$ be an open set, such that $|O|\leq |K_\infty| + \e$. Because $K_\infty\cap O^c = \emptyset$, the separation between the compact $K_\infty$ and the closed $O^c$, $d(K_\infty, O^c)\geq \d>0$, for some $\d>0$. Hence, $K_\infty^\d:= \{x\in \R^n: d(x, K_\infty)<\d\}$ is disjoint from $O^c$, and by the Hausdorff convergence of $K_k\to K_\infty$, we have that 
$K_k\cap O^c  \subset K_\infty^\d \cap O^c = \emptyset$, i.e. $K_k\subseteq O$ for all large $k$. Thus,
\[
|K_k|\leq O \leq |K_\infty| + \eps. 
\]
Taking the limit superior as $k\to \infty$, and noting that $\eps>0$ is arbitrary, we arrive at \eqref{lem:hausconv:eq}.
\end{proof}
Going back to proving e), we first write
\begin{align}
 \F_{\e_k}(u_k) - \chrc{\{u>0\}} & = \big(F_{\e_k}(u_k)\chrc{u_k\geq T\e_k} - \chrc{\{u>0\}}\big) + F_{\e_k}(u_k)\chrc{\{\theta \e_k \leq u_k < T\e_k \}} + F_{\e_k}(u_k)\chrc{\{u_k<\theta \e_k\}} \notag \\
& =: A_1 + A_2 + A_3. \label{prop:limsol1:repn}
\end{align}
Take an arbitrary compact $K\Subset \O$. Claim that
\begin{equation}\label{prop:limsol1:A1}
\int_K |A_1| \, dx = \int_K |\chrc{\{u_k\geq T\e_k\}} - \chrc{\{u>0\}}| \, dx \to 0 \quad \text{as } k\to\infty,
\end{equation}
i.e.\ that $\chrc{\{u_k\geq T\e_k\}} \to \chrc{\{u>0\}}$ in $L^1_\text{loc}$. Note that if $x\in \{u>0\}$, then $x\in \{u_k\geq T\e_k\}$ for all large $k$, so we must have
\begin{equation}
 \chrc{\{u>0\}} \leq \liminf_{k\to\infty} \chrc{\{u_k\geq T\e_k\}}.
\end{equation}
Fatou's lemma then tells us that 
\begin{equation}\label{prop:limsol1:Fatou}
|K\cap \{u>0\}| = \int_K \chrc{\{u>0\}} \, dx \leq \liminf_{k\to\infty} \int_K \chrc{\{u_k\geq T\e_k\}} \, dx = \liminf_{k\to\infty} |K\cap \{u_k\geq T\e_k\}|,
\end{equation}
with equality if and only if \eqref{prop:limsol1:A1} is valid. Furthermore, equality in \eqref{prop:limsol1:Fatou} does hold, because the result of Lemma \ref{lem:hausconv} yields that 
\[
\liminf_{k\to\infty} |K\cap \{u_k\geq T\e_k\}|\leq \limsup_{k\to\infty} |K\cap \{u_k\geq T\e_k\}| \leq |K\cap \overline{\{u>0\}}| = |K\cap \{u>0\}| ,
\]
on account of the Hausdorff convergence of $K\cap \{u_k\geq T\e_k\} \to K\cap \overline{\{u>0\}}$ from a) plus the fact \eqref{prop:limsol1:dens} that $|F_0(u)|=0$. 

To show that the integrals over $K$ of $|A_2+A_3|$ in \eqref{prop:limsol1:repn} go to $0$ as $k\to\infty$, fix $\d>0$ arbitrary and choose $\theta>0$ small enough such that $\F(u)\leq \d$ for $u\leq \theta$. In this way,
\begin{equation}\label{prop:limsol1:A3}
\int_K |A_3| \, dx \leq \int_K \d \chrc{\{u_k\leq \theta\e_k\}} \,dx \leq |K| \d \quad \text{for all } k.
\end{equation}
Now, to bound the integral of $|A_2|$ over $K$, we will use the fact from b) that $\{\theta \e_k \leq u_k < T\e_k \}\} \to F_0(u)\cap K$ in the Hausdorff distance.  As a result, Lemma \ref{lem:hausconv} implies that for all large enough $k$,
\begin{equation}\label{prop:limsol1:thininter}
|\{\theta \e_k \leq u_k < T\e_k \}\cap K| \leq |F_0(u)\cap K| + \d \leq \d,
\end{equation}
since $|F_0(u)\cap K|=0$. Therefore, for all large $k$, we have
\begin{equation}\label{prop:limsol1:A2}
\int_K |A_2| \, dx \leq |\{\theta \e_k \leq u_k < T\e_k \}\cap K| \leq \d.
\end{equation}
Combining \eqref{prop:limsol1:A1}, \eqref{prop:limsol1:A2} and \eqref{prop:limsol1:A3} and taking $\d\to 0$, we can complete the proof of $e)$. 

Finally, that $u=\lim_{k\to \infty} u_k$ is an inner-stationary solution of the one-phase FBP \eqref{FBP}, is a result of the strong convergences in d) and e):
\begin{align*}
\lim_{\k\to\infty} \d I_{\e_k}(u_k)[X] & = \lim_{k\to \infty}\int_\O (|\n u_k|^2 + \F_{\e_k}(u_k)) \div X + L_X \bar{\d}(du_k, du_k) \, dx \\
& =  \int_\O (|\n u|^2 + \F_{0}(u)) \div X + L_X \bar{\d}(du, du) \, dx = \d I_0(u)[X] \quad \text{for all } X\in C^\infty_c(\O;\R^n),
\end{align*}
coupled with the fact that $\d I_\e (u_k)=0$, since classical solutions $u_k=u_{\e_k}\in C^2(\O)$ of \eqref{eq:SLe} are inner-stationary by default.
\end{proof}

We end this section by showing that the interface of a minimizer of $I_\e(\cdot, \O)$ actually satisfies a $\mathcal{D}(\k, L)$ density property in $\O$ (cf. Definition \ref{def:DKL}) for some universal constants $\k, L>0$. We place the result here because its proof requires some of the ideas present in the convergence result above. 

\begin{prop}\label{prop:minDKL} Let $u_\e \in H^1(\O)$ be a positive minimizer of $I_\e$ in $\O$, $\e>0$. Then there exist positive constants $\k$ and $L$, depending only on $n$ and $f$, such that the interface of $u_\e$ satisfies the density property $\mathcal{D}(\k,L)$ in $\O$.
\end{prop}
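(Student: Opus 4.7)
The plan is to argue by contradiction and blow-up, reducing the statement to the classical Alt-Caffarelli zero-phase density estimate for local minimizers of $I_0$. Suppose no such universal constants $\k$ and $L$ existed. Then for each $k\in\N$ one could find a positive minimizer $u_k := u_{\e_k}$ of $I_{\e_k}$ in some domain $\O_k\subseteq\R^n$, a point $x_k\in F_{\e_k}^\tau(u_k)$, and a radius $r_k\geq k\e_k$ with $B_{r_k}(x_k)\subset \O_k$, for which
\begin{equation*}
|Z_{\e_k}^{\tau/4}(u_k)\cap B_{r_k/2}(x_k)|<\tfrac{1}{k}|B_{r_k/2}|.
\end{equation*}
After the rescaling $\tilde u_k(y):=r_k^{-1}u_k(x_k+r_k y)$, this produces a sequence of positive minimizers of $I_{\tilde\e_k}$ on $B_1$ with $\tilde \e_k:=\e_k/r_k\leq 1/k$, $\tilde u_k(0)\in[\tau\tilde\e_k,\, T\tilde\e_k]$, and $|\{\tilde u_k\leq(\tau/4)\tilde\e_k\}\cap B_{1/2}|<(1/k)|B_{1/2}|$.

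First I would invoke the classical Alt-Caffarelli-Salsa theory for minimizers of $I_\e$ (see \cite[Chapter 5]{CafSalsa}): they satisfy the uniform Lipschitz bound of Proposition \ref{prop:Lip} and the strong uniform nondegeneracy \eqref{eq:nondeg0} for every $\theta\in (0,T]$. A standard $\Gamma$-convergence argument then yields, along a subsequence, locally uniform convergence $\tilde u_k\to u_0$ in $B_1$ to a nonnegative local minimizer $u_0$ of $I_0$ with $u_0(0)=0$. In particular Proposition \ref{prop:limsol1} applies to $\{\tilde u_k\}$.

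The heart of the argument is to establish $|Z_0(u_0)\cap B_{1/2}|=0$. For $x\in\{u_0>0\}$, the locally uniform convergence forces $\tilde u_k(x)>(\tau/4)\tilde\e_k$ for all large $k$, while for $x$ in the interior of $Z_0(u_0)$ it forces $\tilde u_k(x)\leq (\tau/4)\tilde\e_k$ for all large $k$; since $|F_0(u_0)|=0$ (cf.\ \eqref{prop:limsol1:dens}), dominated convergence together with the density violation above yields
\begin{equation*}
|Z_0(u_0)\cap B_{1/2}|=\lim_{k\to\infty}|\{\tilde u_k\leq(\tau/4)\tilde\e_k\}\cap B_{1/2}|\leq\lim_{k\to\infty}\tfrac{1}{k}|B_{1/2}|=0.
\end{equation*}
To close, observe that $u_0(0)=0$ forces either $0$ to lie in the interior of $Z_0(u_0)$---whence $|Z_0(u_0)\cap B_{1/2}|>0$ trivially---or $0\in F_0(u_0)$, in which case the classical Alt-Caffarelli zero-phase density estimate for local minimizers of $I_0$ (see \cite{CafSalsa}) gives $|Z_0(u_0)\cap B_{1/2}|\geq c|B_{1/2}|$ for a universal $c>0$. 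Either alternative contradicts the display above.

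The main technical obstacle is the $\Gamma$-convergence step asserting that the limit $u_0$ inherits local minimality (rather than merely inner-stationarity) for $I_0$; this is precisely what allows one to invoke the AC density theorem at $0\in F_0(u_0)$. This fact is classical in the Caffarelli-Salsa framework and rests on the uniform Lipschitz and nondegeneracy bounds for minimizers of $I_\e$; with this in hand, the remainder of the argument is routine.
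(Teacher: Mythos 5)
Your argument is correct but follows a genuinely different path from the paper's. The paper gives a \emph{direct}, quantitative proof: it rescales to the unit scale, introduces the harmonic replacement $h$ of $u_\e$ on $B_1$, and exploits minimality of $u_\e$ via the energy comparison
$\int_{B_1}|\nabla(u_\e-h)|^2\leq\int_{B_1}(\F_\e(h)-\F_\e(u_\e))$. A Harnack argument shows $h\geq T\e$ in $B_{1-\rho}$ for small $\e$, which forces $\int_{B_{1-\rho}}(1-\F_\e(u_\e))$ to be bounded below, and the "thinning out" of the layer $F_\e^{\tau/4}$ then converts this into the desired lower bound on $|Z_\e^{\tau/4}\cap B_1|$. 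Your proof is instead a compactness/contradiction argument: you blow up a hypothetical sequence of failures, pass to a limit local minimizer $u_0$ of $I_0$ via $\Gamma$-convergence, and then invoke the classical Alt--Caffarelli zero-phase density estimate at $0\in F_0(u_0)$. This is shorter and cleaner conceptually, at the price of black-boxing more of the $I_0$ regularity theory (the $\Gamma$-limit minimality and the AC density theorem), whereas the paper's version is essentially self-contained given the uniform Lipschitz and nondegeneracy estimates and produces explicit constants. One detail you should make explicit: your claim that $\tilde u_k(x)\leq(\tau/4)\tilde\e_k$ eventually, for $x$ interior to $Z_0(u_0)$, is \emph{not} a consequence of locally uniform convergence alone (that gives only $\tilde u_k(x)\to 0$, which is much weaker than being below the vanishing threshold $(\tau/4)\tilde\e_k$); it requires the uniform nondegeneracy of the $\tilde u_k$ --- if $\tilde u_k(x)>(\tau/4)\tilde\e_k$ then $\sup_{B_{\delta/2}(x)}\tilde u_k\geq c\,\delta/2$ at a fixed scale $\delta$ with $B_\delta(x)\subset\text{int}\,Z_0(u_0)$, contradicting $\tilde u_k\to 0$ there. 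Since you already cite the nondegeneracy estimate a line earlier, this is a gap in presentation rather than in substance, but it should be spelled out.
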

\begin{proof} Let $p\in F_\e^\t(u_\e)$ and assume $B_r(p)\subset \O$. By recentering and rescaling, 
\[
u_\e \to \tilde{u}_{2\e/r}(x):=(r/2)^{-1} u_\e(p+(r/2)x) \quad \text{for } x\in B_2,
\]
it suffices to prove the following statement: there exist absolute constants $\e_0=:1/L$ and $\k>0$ such that if $\e\leq \e_0$ and $u_\e$ is a minimizer of $I_\e$ in $B_2$ with $u_\e(0)\in (\t\e, T\e)$, then 
\begin{equation}\label{prop:minDKL:claim}
|Z_\e^{\t/4}(u_\e)\cap B_{1}| \geq \k |B_{1}|. 
\end{equation}
We remark that $u_\e$ satisfies the universal Lipschitz bound \eqref{prop:Lip:eq} in $B_1$: $\|\n u_\e\|_{L^\infty(B_1)} \leq C$. 

Denote by $h$ the harmonic function in $B_1$ with $h=u_\e$ on $\de B_1$. Since $h$ is a competitor to $u_\e$ in $B_1$, we have $I_\e(u_\e, B_1)\leq I_\e(h,B_1)$, so that
\begin{equation}\label{prop:minDKL:min}
\int_{B_1} |\n (u_\e -h)|^2\, dx = \int_{B_1}\left(|\n u_\e|^2 - |\n h|^2) \right) dx \leq \int_{B_1} \left(\F_\e(h)-\F_\e(u_\e)\right) dx,
\end{equation}
where the first equality follows from the harmonicity of $h$. By the Poincare-Sobolev inequality, we then get
\begin{equation}\label{prop:minDKL:next}
c \int_{B_1} (h-u_\e)^2 \, dx \leq \int_{B_1} |\n (u_\e -h)|^2\, dx \leq \int_{B_1} \left(\F_\e(h)-\F_\e(u_\e)\right) dx
\end{equation}
for a dimensional constant $c>0$. Taking into consideration that minimizers of $I_\e$ satisfy the nondegeneracy property \eqref{eq:nondeg0} (see \cite[Lemma 4.2]{AS}), we have $\max_{\de B_1} u_\e \geq c_1$ for some absolute positive constant $c_1=c_1(n,f)$. In combination with the Lipschitz bound, this implies that for some positive constant $c_2=c_2(n,f)$
\begin{equation}\label{prop:minDKL:nondeg}
c_2 \leq  \dashint_{\de B_1} u_\e \, d\H^{n-1} = \dashint_{\de B_1} h\, d\H^{n-1},
\end{equation}
so that the mean-value property and the Harnack inequality for harmonic functions entail
\begin{equation}
h(x)\geq \tilde{c} h(0)= \tilde{c} \, \dashint_{\de B_1} h\, d\H^{n-1} \geq c_3 \quad \text{in } B_{1/2},
\end{equation}
for some $c_3=c_3(n,f).$ On the other hand, from the Lipschitz bound we know that $u_\e\leq T\e + C r$ in $B_r$ for all $r\in(0,1)$, so that for $\e\leq \e_1:=c_3/(4T)$ and $r_0=\min(c_3/(4C),1/2)$, we have $u_\e \leq c_3/2$ in $B_\rho$. Hence, $h-u_\e\geq c_3 - c_3/2 = c_3/2$ in $B_{r_0}$, and \eqref{prop:minDKL:next} gives
\begin{equation}\label{prop:minDKL:lower1}
\int_{B_1} \left(\F_\e(h)-\F_\e(u_\e)\right) dx \geq c \int_{B_{r_0}} (h-u_\e)^2 \, dx \geq c_4, 
\end{equation}
for some $c_4=c_4(n,f)>0$ and all small $\e\leq \e_1$. Furthermore, we get from \eqref{prop:minDKL:nondeg} and the Harnack inequality that 
\begin{equation}\label{prop:minDKL:harnack2}
\inf_{B_r} h \geq \frac{1-r}{(1+r)^{n-1}} \dashint_{\de B_1} h\, d\H^{n-1} \geq c_3 (1-r).
\end{equation}
Now, if $\rho= \rho(n,f)$ is small enough, \eqref{prop:minDKL:lower1} plus the fact that $\F_\e(h)-\F_\e(u_\e)\leq 1$ yield for $\e\leq \e_1$
\begin{equation}\label{prop:minDKL:inter2}
\int_{B_{1-\rho}} \left(\F_\e(h)-\F_\e(u_\e)\right) dx \geq c_3 - |B_1\setminus B_{1-\rho}| \geq c_3/2:=c_5.
\end{equation}
Hence, if $\e\leq \min(\e_1, \e_2)$, where $\e_2=\e_2(n,f)$ is defined by $T \e_2= c_3(1-\rho)$, we obtain from \eqref{prop:minDKL:harnack2} that $h\geq T\e$ in $B_{1-\rho}$, so that \eqref{prop:minDKL:inter2} becomes
\begin{equation}\label{prop:minDKL:inter3}
\int_{B_{1-\rho}} \left(1-\F_\e(u_\e)\right) dx \geq c_4 \quad \text{whenever } \e\leq  \min(\e_1, \e_2).
\end{equation}
Writing the integral on the left-hand side of \eqref{prop:minDKL:inter3} as
\begin{align*}
\int_{B_{1-\rho}} \left(1-\F_\e(u_\e)\right) dx & = \int_{B_{1-\rho}\cap Z_\e^{\tau/4}}  \left(1-\F_\e(u_\e)\right) dx +  \int_{B_{1-\rho}\cap F_\e^{\tau/4}}  \left(1-\F_\e(u_\e)\right) dx \\
&\leq (1-\F(\tau/4)) |B_{1-\rho}\cap Z_\e^{\tau/4}| + |B_{1-\rho} \cap F_\e^{\tau/4}|,
\end{align*}
we see that the claim \eqref{prop:minDKL:claim} will be established for $\k := c_4/[2 (1- \F(\tau/4))]$ and some $\e_0=\e_0(n,f)\leq \min\{\e_1,\e_2\}$, once we show that 
\begin{equation}\label{prop:minDKL:thinout}
|B_{1-\rho} \cap F_\e^{\tau/4}| \to 0 \quad \text{as } \e \downarrow 0. 
\end{equation}
Now, the ``thinning out" of the interface $F_\e^{\tau/4}$, expressed in \eqref{prop:minDKL:thinout}, is a consequence of the uniform nondegeneracy property and can be established in the same way as in the proof of Proposition \ref{prop:limsol1} (see \eqref{prop:limsol1:thininter} above).
\end{proof}

\section{Inner-stable solutions of the one-phase FBP}\label{sec:stablevar}

In this section we present the regularity theory, developed by Buttazo et al.\ \cite{Velichkovetal2022}, for a class of weak solutions of \eqref{FBP}, which carries a notion of stability and which is closed under taking locally uniform limits. 

\begin{definition}\label{def:stablevar} Let $C, c, \k$ be positive real numbers, and let $\O\subseteq \R^n$ be a domain.  We will say that a nonnegative function $u\in H^1_\text{loc}(\O)\cap C(\overline{\O})$ belongs to the class $\mathcal{S}(C, c, \k;\O)$ if the following are satisfied:
\begin{enumerate}
\item $u$ is an inner-stationary solution of \eqref{FBP} in $\O$: $$\d I_0(u, \O)=0,$$ that is harmonic in its positive phase $\O^+_0(u)$;
\item the second \emph{inner} variation of $I_0$ at $u$ is nonnegative:
\[
\d^2 I_0(u,\O)[X]\geq 0 \quad \text{for all test vector fields } X\in C^\infty_c(\O; \R^n);
\] 

\item $u$ is Lipschitz continuous in $\O$ with a Lipschitz constant bounded by $C$:
\[
\|\n u\|_{L^\infty(\O)}\leq C;
\]
\item $u$ is nondegenerate in $\O$ with a nondegeneracy constant $c$:
$$\sup_{B_r(x)} u \geq c r \quad \text{for every } x\in \overline{\O^+_0(u)} \text{ and all balls } B_r(x)\subseteq \O; $$ 
\item the zero phase has \emph{positive density} at least $\k$:
$$|\{u=0\}\cap B_r(x)|\geq  \k|B_r| \quad \text{for all } x\in F_0(u) \text{ and } B_r(x)\subseteq \O.$$
\end{enumerate}
We will say that $u$ is a \emph{inner-stable solution} to the one-phase FBP \eqref{FBP} in $\O$ if $u\in \mathcal{S}(C,c,\k;\O)$ for some positive constants $C, c,$ and $\k$. 
\end{definition} 

\begin{remark}\label{rem:minarestable} Local minimizers $u\in \H^1_\text{loc}(\tilde{\O})$ of the Alt-Caffarelli functional $I_0(\cdot, \tilde{\O})$ with $0\in F_0(u)$ are inner-stable solutions in any domain $\O\Subset \tilde{\O}$. They are known to satisfy properties (3)-(5) (see \cite{AC}). To check that they satisfy (1)-(2) as well, we simply note that if $\phi_t$ denotes the flow along a test vector field $X\in C^\infty_c(\O;\R^n)$, then $u_t(x):=u(\phi_t^{-1}(x))$ is a competitor to $u$ in $\O$ for all $t\in \R$, so that $I_0(u_t,\O)\geq I_0(u, \O)$. As $u_0 = u$, we have
\[
\d I_0(u, \O)[X] = \left.\frac{d}{dt}\right|_{t=0} I_0(u_t, \O) = 0 \quad \text{and} \quad  \d^2 I_0(u, \O)[X] = \left.\frac{d^2}{dt^2}\right|_{t=0} I_0(u_t, \O) \geq 0. 
\]
\end{remark}

The goal of this section is to show that inner-stable solutions to the one-phase FBP enjoy virtually the same regularity theory as local minimizers of the Alt-Caffarelli functional. Namely, we will present the proof of the following theorem.

\begin{theorem}[\cite{Velichkovetal2022}] \label{thm:stablereg} 
Let $u$ be an inner-stable solution of \eqref{FBP} in a domain $\O\subseteq \R^n$. Then its free boundary $F_0(u)$ is a smooth hypersurface, except possibly on a closed singular subset of Hausdorff dimension at most $n-n^*$, where the \emph{critical dimension} $n^*$ is given in Definition \ref{def:critdim} below, and satisfies $5\leq n^*\leq 7$. 
\end{theorem}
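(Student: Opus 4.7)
The plan is to follow the Alt--Caffarelli blueprint for free boundary regularity, adapted so that the crucial ``minimality" input is replaced by the inner-stability hypothesis. The backbone of the argument is a blow-up analysis at free boundary points $x_0\in F_0(u)$, considering the rescalings $u_{x_0,r}(x):=r^{-1}u(x_0+rx)$. The conditions (3)--(5) of Definition \ref{def:stablevar} are manifestly scale invariant and inherited uniformly by $\{u_{x_0,r}\}_{r>0}$, and they grant the Lipschitz bound, nondegeneracy and zero-phase density needed to extract, along any sequence $r_k\downarrow 0$, a subsequential locally uniform limit $u_\infty:\R^n\to[0,\infty)$ to which Proposition~\ref{prop:limsol1}-type arguments apply (or rather their FBP analogues): $u_\infty\in C^{0,1}_\text{loc}$, is harmonic in its positive phase, the sets $\{u_{x_0,r_k}>0\}$ converge locally Hausdorff to $\overline{\{u_\infty>0\}}$, and $\nabla u_{x_0,r_k}\to \nabla u_\infty$ in $L^2_\text{loc}$ with $\mathcal{F}_0(u_{x_0,r_k})\to \mathcal{F}_0(u_\infty)$ in $L^1_\text{loc}$. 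The first strong convergence preserves $\delta I_0=0$, while both strong convergences together preserve $\delta^2 I_0\ge 0$, by direct inspection of the formulas \eqref{eq:firstvar}--\eqref{eq:secvar}. Hence the blow-up class is closed: $u_\infty\in \mathcal{S}(C,c,\kappa;\R^n)$.

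Next I would bring in the Weiss monotonicity formula, which is available for any inner-stationary solution of \eqref{FBP} (it is exactly the content of testing $\delta I_0(u,\O)[X]=0$ against the radial vector field, cf.\ Appendix \ref{sec:fbvars}). Monotonicity of $r\mapsto r^{-n}I_0(u,B_r(x_0))-r^{-n-1}\int_{\partial B_r(x_0)}u^2\,d\mathcal{H}^{n-1}$ implies that every blow-up $u_\infty$ at $x_0\in F_0(u)$ is $1$-homogeneous; combined with the closure of the class, $u_\infty$ is a \emph{homogeneous inner-stable free boundary cone}. The regular set $\mathrm{Reg}(F_0(u))$ is then defined as the set of $x_0$ at which some blow-up equals a half-plane solution $(x\cdot\nu)^+$; at such points the Hausdorff convergence of $\{u_{x_0,r_k}>0\}$ gives flatness on a small scale, and I would invoke the standard flatness-implies-$C^{1,\alpha}$ theory (e.g.\ De Silva's viscosity approach \cite{DS2011}, applicable to inner-stationary viscosity solutions) and bootstrap to smoothness. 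The singular set $\mathrm{Sing}(F_0(u))$ consists of the remaining free boundary points.

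To bound $\dim_\H\mathrm{Sing}(F_0(u))$ I would run Federer's dimension reduction. This needs exactly two ingredients: the closure of the class $\mathcal{S}$ under blow-up (established above) and the fact that for any homogeneous inner-stable cone $u_\infty$ on $\R^n$, every further blow-up at a point of $F_0(u_\infty)\setminus\{0\}$ splits off a line and is still an inner-stable cone of dimension $n-1$. With these, Federer's scheme yields $\dim_\H \mathrm{Sing}(F_0(u))\le n-n^*$, where $n^*$ is the smallest dimension admitting a nontrivial (i.e., not a half-plane) homogeneous inner-stable cone—precisely the critical dimension of Definition~\ref{def:critdim}. The upper bound $n^*\le 7$ follows because the Jerison--De Silva cone from \cite{DSJcone} is an energy minimizer and hence, by Remark \ref{rem:minarestable}, inner-stable. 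The lower bound $n^*\ge 5$ is the subtle step: I would restrict $\delta^2 I_0(u_\infty,\R^n\setminus\{0\})[X]\ge 0$ to vector fields of the form $X=\phi\,\nabla u_\infty/|\nabla u_\infty|^2$ supported away from the apex, for which the representation formula \eqref{eq:secvarfb0} reduces to the Caffarelli--Jerison--Kenig inequality \eqref{eq:CJK}; then the argument of Jerison--Savin \cite{JS} rules out singular homogeneous solutions in dimensions $n\le 4$, giving $n^*\ge 5$.

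The main obstacle I anticipate is the last step—making sure that the class of \emph{inner-stable} (rather than minimizing) cones really obeys the Caffarelli--Jerison--Kenig stability inequality \eqref{eq:CJK} at cones with isolated singularity, since one must justify the choice of singular test vector field, control the behavior of $X$ near the apex, and check that $\nabla u_\infty\neq 0$ throughout $\O^+_0(u_\infty)$ so that $X$ is well defined. Once the translation from $\delta^2 I_0\ge 0$ to \eqref{eq:CJK} is cleanly made through formula \eqref{eq:secvarfb0}, the remainder of the argument—flatness-to-smoothness and Federer dimension reduction—proceeds by now-standard techniques.
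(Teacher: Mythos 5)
Your proposal is correct and mirrors the paper's proof step by step: compactness and blow-up homogeneity of the class $\mathcal{S}$ via Weiss monotonicity (Proposition~\ref{prop:uniform} and Corollary~\ref{coro:blow}), flat-implies-smooth at regular points, Federer dimension reduction, $n^*\leq 7$ from the De Silva--Jerison cone together with Remark~\ref{rem:minarestable}, and $n^*\geq 5$ from the Caffarelli--Jerison--Kenig inequality obtained by testing $\d^2 I_0\geq 0$ with $X=\phi\,\nabla u_\infty/|\nabla u_\infty|^2$ via the representation formula \eqref{eq:secvarfb0}, which is exactly the paper's Proposition~\ref{prop:critdim}. The one ingredient you rely on implicitly but do not verify is that inner-stable solutions are viscosity solutions of \eqref{FBP} (needed to run the flat-to-smooth theory at regular points); the paper establishes this in Lemma~\ref{lem:visco}, using homogeneity of blow-ups, the pointwise identity $|\nabla u|=1$ on the regular free boundary from Proposition~\ref{prop:1varsmooth}, and the zero-phase density to rule out two-plane blow-ups.
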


We will first collect some basic results necessary for the blow-up analysis behind Theorem \ref{thm:stablereg}. We start with the fact that the class $\mathcal{S}(C,c,\k;\O)$ is compact in the uniform (on compacts) topology. 

\begin{prop}\label{prop:uniform}
Let $\{u_k\}_k$ be a sequence in $\mathcal{S}(C,c,\k;\O)$ with $0\in F_0(u_k)$ for every $k\in \N$. Then, up to taking a subsequence, $u_k$ converges uniformly on compact subsets to some $u\in \mathcal{S}(C,c,\k;\O)$. Moreover, the subsequence can be taken so that 
\begin{equation}\label{prop:uniform:haus}
\O^+_0(u_k) \to \O^+_0(u_k) \quad \text{and} \quad F_0(u_k)\to F_0(u) \quad \text{locally in the Hausdorff distance}.  
\end{equation}
\end{prop}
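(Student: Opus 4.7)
My starting point is the uniform Lipschitz bound (3) together with the normalization $u_k(0)=0$ (since $0\in F_0(u_k)$), which make $\{u_k\}$ equicontinuous and uniformly bounded on compact subsets of $\O$. Arzel\`a–Ascoli then yields a subsequence $u_k\to u$ locally uniformly, with $u$ Lipschitz continuous and inheriting constant $\leq C$, so property (3) is immediate. Harmonicity of $u$ in $\O_0^+(u)$ follows point-by-point: if $u(x_0)>0$, then by continuity $u\geq u(x_0)/2$ on some $B_r(x_0)$, hence $u_k>0$ there for large $k$, so each $u_k$ is harmonic in $B_r(x_0)$, and the uniform limit is harmonic. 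Nondegeneracy (4) transfers similarly: for $x\in\O_0^+(u)$ we eventually have $u_k(x)>0$, so $\sup_{B_r(x)}u_k\geq cr$ for every $B_r(x)\subset\O$; passing to the limit and then extending by continuity to $\overline{\O_0^+(u)}$ gives (4) for $u$ with the same constant $c$.

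Next I would establish the Hausdorff convergence \eqref{prop:uniform:haus}. The argument is essentially identical to parts (b)–(c) of Proposition \ref{prop:limsol1}: a point $x\in\overline{\O_0^+(u)}$ must be approached by points of $\overline{\O_0^+(u_k)}$ (else uniform convergence forces $u\equiv 0$ in a ball around $x$, contradicting nondegeneracy); conversely, if $\overline{\O_0^+(u_k)}$ contained points escaping $\overline{\O_0^+(u)}$, nondegeneracy of $u_k$ would produce a point in the limit positive phase where $u$ does not vanish, again a contradiction. From this the positive density (5) for $u$ can be deduced: given $x\in F_0(u)$, pick $x_k\in F_0(u_k)$ with $x_k\to x$; for any $\eps>0$ the inclusion $\{u_k=0\}\subseteq\{u\leq \eps\}$ holds on compacts for large $k$ by uniform convergence, so for $r'<r$
\[
|\{u\leq \eps\}\cap B_r(x)|\geq \limsup_{k} |\{u_k=0\}\cap B_{r'}(x_k)|\geq \k |B_{r'}|,
\]
and letting $\eps\downarrow 0$ and $r'\uparrow r$ gives $|\{u=0\}\cap B_r(x)|\geq \k|B_r|$. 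Combined with the Lipschitz bound plus nondegeneracy (which forces positive density of $\O_0^+(u)$ near $F_0(u)$, as in the proof of Proposition \ref{prop:limsol1}), this also yields $|F_0(u)|=0$.

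The crux — and the step I expect to be the main obstacle — is to upgrade $u_k\to u$ to strong $H^1_{\text{loc}}$ convergence, so that the quadratic-in-gradient integrands in \eqref{eq:firstvar}–\eqref{eq:secvar} pass to the limit. The trick is that for $u_k$ harmonic in $\O_0^+(u_k)$ and vanishing on $F_0(u_k)$, the distribution $u_k\,\D u_k$ vanishes, so for any $\phi\in C^\infty_c(\O)$
\[
\int_\O |\n u_k|^2 \phi\, dx = -\int_\O u_k\,\n u_k\cdot\n\phi\,dx = \frac{1}{2}\int_\O u_k^2\, \D\phi\,dx.
\]
The right-hand side converges by uniform convergence to $\tfrac12\int u^2 \D\phi$, which by the same identity applied to $u$ equals $\int|\n u|^2\phi\,dx$. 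Hence $|\n u_k|^2\to|\n u|^2$ in $L^1_{\text{loc}}$; combined with weak $L^2_{\text{loc}}$ convergence of $\n u_k$ this gives strong $L^2_{\text{loc}}$ convergence. In parallel, the Hausdorff convergence of $F_0(u_k)$ and $|F_0(u)|=0$ yield $\F_0(u_k)=\chrc{\{u_k>0\}}\to\chrc{\{u>0\}}=\F_0(u)$ in $L^1_{\text{loc}}$, exactly as in the proof of part (e) of Proposition \ref{prop:limsol1}.

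With these convergences in hand I would pass to the limit in the formulas \eqref{eq:firstvar} and \eqref{eq:secvar}. Every integrand is a polynomial in $\n u$, $\chrc{\{u>0\}}$ and derivatives of the fixed test field $X\in C^\infty_c(\O;\R^n)$, and the quadratic gradient terms $L_X\bar\d(du_k,du_k)$, $L_X^2\bar\d(du_k,du_k)$ are continuous with respect to strong $L^2_{\text{loc}}$ convergence of $\n u_k$. Therefore $\d I_0(u_k,\O)[X]\to\d I_0(u,\O)[X]$ and $\d^2 I_0(u_k,\O)[X]\to\d^2 I_0(u,\O)[X]$. Since the left sides vanish, respectively are nonnegative, so are the limits, establishing properties (1) and (2) for $u$. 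Thus $u\in\mathcal{S}(C,c,\k;\O)$, completing the proof.
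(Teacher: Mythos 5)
Your proof is correct and follows the same overall skeleton as the paper's (Arzel\`a--Ascoli, inheritance of (3)--(5), Hausdorff convergence, strong convergence of gradients, $L^1_{\text{loc}}$ convergence of the characteristic functions, passage to the limit in $\d I_0$ and $\d^2 I_0$). The genuinely different step is how you obtain the strong $L^2_{\text{loc}}$ convergence of $\n u_k$: the paper simply cites \cite[Lemma 1.21]{CafSalsa} for the a.e.\ convergence $\n u_k \to \n u$, which then upgrades to $L^2_{\text{loc}}$ via dominated convergence under the uniform Lipschitz bound. You instead give a self-contained argument built on the distributional identity $u_k \D u_k = 0$ (valid because $u_k\geq 0$ is Lipschitz and harmonic in $\{u_k>0\}$, hence subharmonic with $\D u_k$ a Radon measure supported on $\{u_k=0\}$), which yields
\[
\int_\O |\n u_k|^2 \phi\, dx = \tfrac12\int_\O u_k^2\, \D\phi\,dx,
\]
and then passes to the limit using only the locally uniform convergence $u_k \to u$, combined with the weak $L^2_{\text{loc}}$ convergence of $\n u_k$. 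This buys you an elementary, citation-free proof of the key convergence; the trade-off is that you need to separately justify that $\D u_k$ is a nonnegative Radon measure and that the same identity holds for the limit $u$, both of which you implicitly use. One small slip in presentation: from the tested convergence $\int|\n u_k|^2\phi \to \int|\n u|^2\phi$ you conclude ``$|\n u_k|^2 \to |\n u|^2$ in $L^1_{\text{loc}}$,'' but what you actually have at that point is weak-$*$ convergence of the measures $|\n u_k|^2\,dx$; the $L^1_{\text{loc}}$ conclusion only becomes available after the strong $L^2$ convergence of $\n u_k$ that you derive next. Since your subsequent reasoning uses only the tested convergence (applied with $\phi^2$) together with the weak convergence, the logic is sound -- it is merely the labelling of that intermediate step that overclaims.
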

\begin{proof}
The uniform Lipschitz continuity, in combination with $u_k(0)=0$, implies the uniform local boundedness of the sequence. Thus, by Arzela-Ascoli, $\{u_k\}$ subconverges on compacts to a continuous function $u$ that satisfies $u(0)=0$ and the same Lipschitz bound $\|\n u\|_{L^\infty(\O)}\leq C$. Moreover, $u\lfloor_{\O^+_0(u)}$ is harmonic as the uniform limit of the harmonic functions $u_k\lfloor_{\O^+_0(u)}$. That $u$ inherits (4)-(5) is straightforward to verify. 

Now, it is well known classically (see \cite[Lemma 1.21]{CafSalsa}) that the uniform Lipschitz continuity and the uniform nondegeneracy of the sequence imply the Hausdorff distance convergence \eqref{prop:uniform:haus}, as well as the convergences:
\begin{equation}\label{prop:uniform:strong}
\n u_k \to \n u  \text{ a.e.\ in }\O \quad \text{and} \quad \chrc{\O^+_0(u_k)}\to \chrc{\O^+_0(u)} \text{ in } L^1_\text{loc}(\O).
\end{equation}
These, in turn, entail that for any test vector field $X\in C^\infty_c(\O;\R^n)$,
\[
\d I_0(u,\O)[X] = \lim_{k\to\infty} \d I_0(u_k,\O)[X] = 0 \quad \text{and} \quad \d^2I_0(u,\O)[X] = \lim_{k\to\infty} \d^2 I_0(u_k,\O)[X] \geq 0,
\]
i.e.\ $u$ inherits the variational properties (1) and (2), as well. 
\end{proof}

Note that all the properties (1)-(5) of Definition \ref{def:stablevar} are scale invariant. Thus, if $u\in \mathcal{S}(C,c,\k;\O)$, then its rescale $u_r(x):=r^{-1}u(rx)$ belongs to $\mathcal{S}(C,c,\k, r^{-1} \O)$. As a corollary to Proposition \ref{prop:uniform}, we see that both blow-up and blow-down limits of solutions in the class $\mathcal{S}$ remain inner-stable solutions.

\begin{coro}\label{coro:blow} Let $\O\subseteq \R^n$ be a domain and let $u\in \mathcal{S}(C,c,\k;\O)$ for some positive constants $C, c, \k$. Assuming that $x_0\in F_0(u)$, then
\begin{enumerate}[(a)]
\item For every sequence $r_k\downarrow 0$, the blow-ups $u_{r_k}(x):=(r_k)^{-1}u(x_0+r_k x)$ subconverge on compact subsets of $\R^n$ to some $u_0 \in \mathcal{S}(C,c,\k; \R^n).$
\item If $\O=\R^n$, then for every sequence $r_k\uparrow \infty$, the blow-downs $u_{r_k}$ subconverge uniformly on compact subsets of $\R^n$ to some $u_\infty \in \mathcal{S}(C,c,\k; \R^n)$.
\end{enumerate}
Moreover, the blow-up limit $u_0$ and the blow-down limit $u_\infty$ are homogeneous functions of degree $1$. 
\end{coro}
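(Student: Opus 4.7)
The plan is to combine scale invariance of the class $\mathcal{S}$ with the compactness result of Proposition \ref{prop:uniform} to extract subsequential blow-up and blow-down limits, and then invoke a Weiss-type monotonicity formula to force $1$-homogeneity. As a first step, I would verify that the rescaling $u_r(x) := r^{-1} u(x_0 + r x)$ maps $\mathcal{S}(C, c, \k; \O)$ into $\mathcal{S}(C, c, \k; \O_r)$ with the \emph{same} constants, where $\O_r := r^{-1}(\O - x_0)$. The Lipschitz, nondegeneracy, and zero-phase density constants are manifestly scale invariant, and harmonicity in the positive phase transfers directly. For conditions (1) and (2), the change of variables $y = x_0 + r x$ together with the correspondence $Y(y) := r\, X(r^{-1}(y - x_0))$ between compactly supported test fields on $\O_r$ and on $\O$ gives $I_0(u_r \circ (\phi_t^X)^{-1}, \O_r) = r^{-n} I_0(u \circ \tilde{\phi}_t^{-1}, \O)$, where $\tilde{\phi}_t$ is the flow of $Y$; differentiating in $t$ yields $\d I_0(u_r, \O_r)[X] = r^{-n} \d I_0(u, \O)[Y]$ and likewise $\d^2 I_0(u_r, \O_r)[X] = r^{-n} \d^2 I_0(u, \O)[Y]$, so inner-stationarity and inner-stability are preserved.

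Given scale invariance, both (a) and (b) follow from Proposition \ref{prop:uniform}: for (a) with $r_k \downarrow 0$, every ball $B_R$ lies inside $\O_{r_k}$ for large $k$, so the restrictions $u_{r_k}\lfloor_{B_R}$ belong to $\mathcal{S}(C,c,\k;B_R)$ with $0 \in F_0(u_{r_k})$, and Proposition \ref{prop:uniform} combined with a diagonal extraction as $R \to \infty$ produces $u_0 \in \mathcal{S}(C,c,\k;\R^n)$ with $0 \in F_0(u_0)$. Part (b) is identical, with $\O_{r_k} = \R^n$ throughout.

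For the homogeneity claim I would invoke the Weiss monotonicity formula
\[
r \mapsto W(x_0, r; u) := \frac{1}{r^n} I_0(u, B_r(x_0)) - \frac{1}{r^{n+1}} \int_{\de B_r(x_0)} u^2 \, d\H^{n-1},
\]
which is non-decreasing in $r$ for any inner-stationary solution of \eqref{FBP}. The monotonicity identity is obtained by computing $\frac{d}{dr} W$ via $\d I_0(u,\O)[X] = 0$ applied to a radial cutoff vector field $X(y) = (y - x_0)\eta(|y - x_0|)$, so hypothesis (1) alone suffices. Because $W(x_0, r; u) = W(0, 1; u_r)$, and $W(x_0, \cdot; u)$ is bounded by the Lipschitz and nondegeneracy estimates, the limits $\ell_0 := \lim_{r \downarrow 0} W(x_0, r; u)$ and $\ell_\infty := \lim_{r \to \infty} W(x_0, r; u)$ exist. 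Passing to the blow-up limit using the $L^2_{\text{loc}}$ convergence of gradients and the $L^1_{\text{loc}}$ convergence of $\chrc{\{u_{r_k} > 0\}}$ supplied by \eqref{prop:uniform:strong}, together with uniform convergence on spheres, yields $W(0, s; u_0) = \ell_0$ for every $s > 0$. The rigidity clause of Weiss's formula then forces $u_0$ to be $1$-homogeneous about the origin; the blow-down case is analogous, with $\ell_\infty$ in place of $\ell_0$.

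The main technical obstacle I anticipate is establishing the Weiss monotonicity directly from inner-stationarity (rather than from minimality, where it is classical), and justifying the limit passage in every term of $W$ along the rescaled sequence. Both reduce to testing the first inner variation against radial cutoffs and invoking the strong convergences already packaged in Proposition \ref{prop:uniform}, so no new ideas are required beyond what this excerpt already furnishes.
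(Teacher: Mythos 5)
Your proposal is correct and follows essentially the same route as the paper: scale invariance of the class $\mathcal{S}$ (noted by the paper right before the corollary), compactness from Proposition \ref{prop:uniform}, and the Weiss monotonicity formula applied to inner-stationary solutions to force $1$-homogeneity of the limits. You simply fill in details that the paper's two-sentence proof leaves implicit, such as the scaling identity $\d I_0(u_r,\O_r)[X]=r^{-n}\d I_0(u,\O)[Y]$ and the passage to the limit in the Weiss density via the strong convergences \eqref{prop:uniform:strong}.
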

\begin{proof}
The claims in (a) and (b) follow from Proposition \ref{prop:uniform}. That the limits $u_0$ and $u_\infty$ are homogeneous of degree one is a consequence of the the Weiss Monotonicity Formula (\cite{weiss1998partial}), which applies to inner-stationary solutions of \eqref{FBP}. 
\end{proof}

Next, we state the notion of viscosity solution to the one-phase FBP (\cite{CafI}, \cite{CafSalsa}) and show that, in fact, inner-stable solutions are viscosity solutions, as well. 

\begin{definition}\label{def:visco} A nonnegative function $u\in C(\O)$ is a viscosity solution of \eqref{FBP} if $u$ is harmonic in $\O_0^+(u)$ and 
\begin{enumerate}
\item (supersolution property) for every $x_0 \in F(u)$ with a tangent ball $B$ from the positive side ($x_0 \in \de B$ and $B\subset \O^+_0(u)$), there is $\a \leq 1$ such that 
\begin{equation}\label{def:visco:super}
u(x) = \a \langle x-x_0, \nu\rangle^+ + o(|x-x_0|)
\end{equation}
as $x\to x_0$ non-tangentially in $B$, with $\nu$ the inner normal to $\de B$ at $x_0$;
\item (subsolution property) for every $x_0 \in F(u)$ with a tangent ball $B$ from the zero side ($x_0 \in \de B$ and $B\subset Z_0(u)$), there is $\b \geq 1$ such that 
\begin{equation}\label{def:visco:sub}
u(x) = \b \langle x-x_0, \nu\rangle^+ + o(|x-x_0|)
\end{equation}
as $x\to x_0$ non-tangentially in $B^c$, with $\nu$ the outer normal to $\de B$ at $x_0$.
\end{enumerate}
\end{definition}

\begin{lemma}\label{lem:visco} Let $u$ be an inner-stable solution of \eqref{FBP} in a domain $\O\subseteq \R^n$. Then $u$ is a viscosity solution of \eqref{FBP} in $\O$.  
\end{lemma}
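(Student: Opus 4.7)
The plan is to blow up at $x_0\in F_0(u)$: translate $x_0=0$, choose $r_k\downarrow 0$, and pass to a subsequential locally uniform limit $u_k(x):=r_k^{-1}u(r_k x)\to u_0$, where $u_0\in\mathcal{S}(C,c,\k;\R^n)$ is $1$-homogeneous by Corollary \ref{coro:blow}, and the positive phases and free boundaries converge in Hausdorff distance by Proposition \ref{prop:uniform}. If the tangent ball $B$ is from the positive side with inner normal $\nu$ at $0$, then $r_k^{-1}B$ exhausts the half-space $H:=\{\langle x,\nu\rangle>0\}$, forcing $H\subseteq\O^+_0(u_0)$; if $B$ is from the zero side with outer normal $\nu$, then $-H\subseteq Z_0(u_0)$, so $u_0\equiv 0$ on $\overline{-H}$ and $\O^+_0(u_0)\subseteq H$.

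The positive phase $\O^+_0(u_0)$ is a $1$-homogeneous open cone. Writing $u_0(x)=|x|g(x/|x|)$ on $\O^+_0(u_0)$ and imposing harmonicity, the spherical profile $g$ must be a positive Dirichlet eigenfunction of $-\Delta_{\S^{n-1}}$ on each connected component of $\Sigma:=\O^+_0(u_0)\cap\S^{n-1}$, with eigenvalue $n-1$. By the Krein--Rutman principle applied to each component, $n-1$ must be the \emph{first} Dirichlet eigenvalue of every component of $\Sigma$. The upper hemisphere $\S^{n-1}_+$ is the only connected subdomain of $\S^{n-1}$ on one side of a given equator with first Dirichlet eigenvalue $n-1$ (with first eigenfunction $\theta\mapsto\langle\theta,\nu\rangle$), and by the strict monotonicity of first Dirichlet eigenvalues with respect to proper connected set inclusion, no proper subdomain or proper connected enlargement of $\S^{n-1}_+$ has first eigenvalue $n-1$. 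Combined with the containment $\Sigma\supseteq\S^{n-1}_+$ (supersolution case) or $\Sigma\subseteq\S^{n-1}_+$ (subsolution case), and with the exclusion of the ``two-plane'' configuration $\Sigma=\S^{n-1}_+\sqcup\S^{n-1}_-$ (i.e.\ $u_0=\alpha\langle x,\nu\rangle^+ + \gamma\langle x,-\nu\rangle^+$) by the positive-density hypothesis (5) of $\mathcal{S}$ -- which fails for the two-plane because its zero phase has Lebesgue measure zero -- one concludes $\Sigma=\S^{n-1}_+$, equivalently $\O^+_0(u_0)=H$, and $u_0(x)=\alpha\langle x,\nu\rangle^+$ globally on $\R^n$ for some $\alpha>0$. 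The boundary Hopf-type asymptotic expansion \cite[Lemma 11.17]{CafSalsa} applied to the nonnegative harmonic $u$ inside $B$ (resp.\ $B^c$) at $x_0$ identifies this $\alpha$ as the non-tangential slope appearing in \eqref{def:visco:super} (resp.\ \eqref{def:visco:sub}).

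Finally, $u_0$ inherits inner-stationarity as a locally uniform limit of the inner-stationary $u_k$, via the convergence argument of Proposition \ref{prop:limsol1}. Orient $\nu=e_n$ and insert the test field $X=\phi e_n$, $\phi\in C^\infty_c(\R^n)$, into the first inner-variation formula \eqref{eq:firstvar}: since $u_0=\alpha x_n^+$ globally, the integrand on $\{x_n>0\}$ reduces to $(1-\alpha^2)\de_n\phi$, and a direct integration by parts produces $\delta I_0(u_0,\R^n)[X]=(\alpha^2-1)\int_{\{x_n=0\}}\phi\,d\H^{n-1}$. Vanishing for all $\phi$ forces $\alpha=1$, which in particular satisfies both viscosity requirements $\alpha\leq 1$ (supersolution) and $\alpha\geq 1$ (subsolution). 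The main obstacle is the classification step above: the combined use of the Krein--Rutman principle, the strict monotonicity of spherical first Dirichlet eigenvalues, and property (5) of inner-stable solutions is what rigorously pins $\O^+_0(u_0)$ to the canonical half-space $H$, excluding any exotic $1$-homogeneous positive-phase cone in $\bar H^c$ that might otherwise obstruct the expansions in Definition \ref{def:visco}.
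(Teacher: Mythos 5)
Your proposal is correct and follows essentially the same route as the paper's: blow up at $x_0$, use Corollary \ref{coro:blow} to get a one-homogeneous inner-stable limit $u_0$, analyze the spherical section of its positive phase via first Dirichlet eigenvalues (Krein--Rutman plus strict domain monotonicity), use the positive-density property (5) to exclude the two-plane $\alpha\langle x,\nu\rangle^+ + \tilde\alpha\langle x,-\nu\rangle^+$, and invoke inner-stationarity to force the slope to equal $1$. Your explicit computation $\delta I_0(u_0,\R^n)[\phi e_n]=(\alpha^2-1)\int_{\{x_n=0\}}\phi$ is precisely the specialization of Proposition \ref{prop:1varsmooth} to the flat case, which is what the paper invokes; and your attribution of the inner-stationarity of $u_0$ to Proposition \ref{prop:limsol1} is a minor slip (the relevant result for blow-ups of a fixed inner-stable solution is Proposition \ref{prop:uniform}/Corollary \ref{coro:blow}, which you already cited), but it does not affect the argument.
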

\begin{proof} 
We will provide the proof of the supersolution property of $u$; the proof of the subsolution property is analogous. 

If $F(u)$ has a tangent ball $B$ from the positive side at $x_0$, then by \cite[Lemma 11.17]{CafSalsa} \eqref{def:visco:super} is satisfied from some $\a > 0$. According to Corollary \ref{coro:blow}, any blow-up limit $u_0$ of $u$ at $x_0$, is an inner-stable solution which is homogeneous of degree $1$. Therefore,
\[
u_0(x) =  \a \langle x, \nu\rangle \quad \text{in } P^+:=\{x\in \R^n: \langle x,\nu \rangle>0 \}.
\]
If $\tilde{\O}:=\O^+(u_0)\setminus P^+ = \emptyset$, then $u_0(x) =   \a \langle x, \nu\rangle ^+$ in all of $\R^n$, so that $F_0(u_0)$ is regular everywhere. By Proposition \ref{prop:1varsmooth}, we then get that $\a=1$.

If $\O_1\neq \emptyset$, we notice that in the spherical section $\tilde{\O}_\S:=\tilde{\O}\cap \S^{n-1}$
\[
-\D_{\S^{n-1}} u = (n-1) u \quad \text{in } \tilde{\O}_\S, \quad \text{and } u=0 \text{ on } \de\tilde{\O}_\S, 
\]
i.e.\ $u\lfloor_{\tilde{\O}_\S}$ is the first Dirichlet eigenfunction of $-\D_{S^{n-1}}$ in $\tilde{\O}_\S$, with associated eigenvalue $(n-1)$. Since, the half-sphere has the same first Dirichlet eigenvalue and contains $\tilde{\O}\cap \S^{n-1}$, then $\tilde{\O}\cap \S^{n-1}$ is a half-sphere, and $u_0(x) =   \a \langle x, \nu\rangle ^+ + \tilde{\a} \langle x, \nu\rangle ^-$ for some $\tilde{\a}>0$. This, however, is inconsistent with the positive density of $Z(u_0)$.
\end{proof}

\begin{definition} Let $u$ be an inner-stable solution of \eqref{FBP} in $\O$. A point $x_0\in F_0(u)$ is called \emph{regular} if $u$ has a blow-up limit at $x_0$ of the form $u_0(x) = \langle x, \nu\rangle^+$ for some unit vector $\nu \in \R^n$. Otherwise, the point is called \emph{singular}. We will denote by $\text{Reg}(u)$ the subset of all regular points of $F_0(u)$ and by $\text{Sing}(u):=F_0(u)\setminus \text{Reg}(u)$ -- the subset of its singular points.  
\end{definition}

\begin{remark} Let $x_0$ be a regular point of $F_0(u)$ of an inner-stable solution $u$ and  let $u_k(x):=r_k^{-1} u(x_0+ r_k x)$ be a sequence of blow-ups converging to $u_0(x) = \langle x, \nu\rangle$, where we may assume $\nu = e_n$. We note that $u_k$ are viscosity solutions of \eqref{FBP} by the previous Lemma  \ref{lem:visco}. Since by \eqref{prop:uniform:haus} we have $\O^+_0(u_k)\cap B_1 \to \O^+_0(u_0)\cap B_1$ in the Hausdorff distance, then for every  $\d>0$ there is $k$ large enough such that 
\[
 B_1 \cap \{x_n > - \d\} \supseteq \O^+_0(u_k)\cap B_1 \supseteq B_1 \cap \{x_n > \d\},
\]
i.e. the free boundary $F_0(u_k)$ is $\d$-flat. When $\d$ is sufficiently small, the classical regularity result ``Flat $\Rightarrow$ Smooth" of Caffarelli (see \cite{CafI, CafII}) kicks in and yields that $F_0(u_k)$ is a smooth graph in $B_{1/2}$. Therefore, in a neighbourhood $U$ of every regular point, $F_0(u)\cap U$ is a smooth hypersurface, separating positive from zero phase, and $u$ is a classical solution of \eqref{FBP} in $U$. 
\end{remark}

\begin{definition}\label{def:critdim} Define the critical dimension $n^*$ for inner-stable solutions to the one-phase FBP to be the lowest dimension $n$ for which there exists a global inner-stable solution $u:\R^n\to \R$ that is homogeneous of degree one, with $0\in \text{Sing}(u)$.
\end{definition}

\begin{remark}\label{rem:entstable} Note that if $u\in \mathcal{S}(C,c,\k;\R^n)$ is a global inner-stable solution, then by Corollary \ref{coro:blow} any blow-down limit $u_\infty=\lim_{k\to\infty} u_{R_k}$, $R_k\to\infty$, belongs to $\mathcal{S}(C,c,\k;\R^n)$ and is homogeneous of degree $1$. Therefore, when $n\leq n^*-1$, the fact that $F_0(u_\infty)$ is a smooth hypersurface implies that 
$u_\infty = x_n^+$ in some Euclidean coordinate system. Now, since $u_{R_k} \to x_n^+$ locally uniformly, the free boundary $F(u_\infty)$ is asymptotically flat, i.e.\
\[
 B_{R_k} \cap \{x_n > - \d_k\} \supseteq \O^+_0(u)\cap B_{R_k} \supseteq B_{R_k} \cap \{x_n > \d_k\},
\]
with the aspect ratio $\d_k/R_k\to 0$ as $k\to\infty$. As $u$ is a viscosity solution of \eqref{FBP} as well, we conclude from Caffarelli's theorem that $u(x)=x_n^+$. 
\end{remark}

The existence of a singular entire minimizer of \eqref{FBP} in $\R^7$ that is homogeneous of degree $1$, constructed by De Silva and Jerison (\cite{DSJcone}), and the observation in Remark \ref{rem:minarestable} suggest that $n^*\leq 7$.  Due to works by Caffarelli, Jerison and Kenig \cite{CJK}, and Jerison and Savin \cite{JS}, it is currently known that the lower bound for the critical dimension $n^*_\text{e}$, in the case of \emph{energy minimizing} solutions is $n^*_\text{e}\geq 5$. This was achieved  by proving the following slightly more general result. 
\begin{theorem}[\cite{JS}]\label{thm:JS}
Let $u$ be a homogeneous solution of \eqref{FBP} in $\O=\R^n$, such that $0\in F_0(u)$ and $F_0(u)\setminus \{0\}$ is a smooth cone separating positive from zero phase. Assume further that $u$ satisfies the stability inequality
\begin{equation}\label{thm:JS:CJK}
\int_{\O^+_0(u)} |\n \phi|^2 \, dx - \int_{F_0(u)} H \phi^2\, d\H^{n-1} \geq 0 \quad \text{for all } \phi\in C^\infty_c(\R^n\setminus \{0\}), 
\end{equation}
where $H$ denotes the mean curvature of $F_0(u)$ with respect to the outer unit normal to $\de \O^+_0(u)$.
Then, for $n=2,3,4$,  $F_0(u)$ is a hyperplane and $u(x)=\langle x,\nu \rangle^+$ for some unit vector $\nu\in \R^n$.
\end{theorem}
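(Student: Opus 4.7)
\medskip
\noindent\textbf{Proof proposal.} The natural strategy is to exploit the cone structure of $F_0(u)$ to reduce the stability inequality \eqref{thm:JS:CJK} to a spectral inequality on the cross-section $\Sigma := F_0(u)\cap \S^{n-1}$, and then to extract a dimension-sensitive rigidity statement from that reduced inequality. Writing $x = r\o$ with $\o\in \S^{n-1}$ and $u(x) = r g(\o)$, the harmonicity of $u$ on $\O^+_0(u)$ yields
\[
-\D_{\S^{n-1}} g = (n-1) g \quad \text{in } D := \O^+_0(u)\cap \S^{n-1},
\]
with $g = 0$ on $\Sigma = \de D$, while the free boundary gradient condition $|\n u|=1$ on $F_0(u)$ becomes $|\n_{\S^{n-1}} g|=1$ on $\Sigma$. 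The cone structure also gives $H(r\o) = r^{-1}H_\Sigma(\o)$, where $H_\Sigma$ is the mean curvature of $\Sigma$ inside $\S^{n-1}$.

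\medskip
The next step is to feed test functions of the separated form $\phi(x) = \xi(r)\psi(\o)$, with $\xi\in C^\infty_c((0,\infty))$ and $\psi\in C^\infty(\overline{D})$, into \eqref{thm:JS:CJK}. Using $dx = r^{n-1}\,dr\,d\H^{n-1}|_{\S^{n-1}}$ and $d\H^{n-1}|_{F_0(u)} = r^{n-2}\,dr\,d\H^{n-2}|_\Sigma$, the two integrals factor cleanly into products of radial and spherical pieces. Optimizing over $\xi$ via the sharp one-dimensional Hardy inequality
\[
\int_0^\infty (\xi')^2 r^{n-1}\,dr \ \geq\ \left(\tfrac{n-2}{2}\right)^{\!2}\!\int_0^\infty \xi^2 r^{n-3}\,dr,
\]
with the sharp constant attained asymptotically by $\xi(r)\sim r^{-(n-2)/2}$ on logarithmic scales, reduces \eqref{thm:JS:CJK} to the spherical stability inequality
\begin{equation}\label{eq:plan-spher}
\int_D |\n_{\S^{n-1}} \psi|^2\,d\H^{n-1} + \left(\tfrac{n-2}{2}\right)^{\!2}\!\int_D \psi^2\,d\H^{n-1} \ \geq\ \int_\Sigma H_\Sigma \psi^2\,d\H^{n-2},
\end{equation}
valid for every $\psi\in C^\infty(\overline{D})$, with \emph{no} boundary vanishing imposed.

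\medskip
The decisive third step will be to test \eqref{eq:plan-spher} with functions $\psi$ that do \emph{not} vanish on $\Sigma$, so the curvature integral is actively constrained. A natural family is $\psi = \Phi(g)$ for smooth profiles $\Phi:[0,\max g]\to \R$: the chain rule gives $|\n_{\S^{n-1}}\psi|^2 = \Phi'(g)^2|\n_{\S^{n-1}} g|^2$; the boundary conditions $g = 0$ and $|\n_{\S^{n-1}} g|=1$ on $\Sigma$ collapse the right-hand side of \eqref{eq:plan-spher} to $\Phi(0)^2 \int_\Sigma H_\Sigma\,d\H^{n-2}$; and integration by parts on $D$, together with $-\D_{\S^{n-1}} g = (n-1)g$, converts the left-hand side into an explicit polynomial functional of $g$. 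Comparing the inequalities obtained for two (or more) independent profiles produces an algebraic obstruction in $n$ that is inconsistent for $n\in\{2,3,4\}$ unless $H_\Sigma\equiv 0$, while becoming compatible starting at $n=5$ (and accommodating the singular cone of \cite{DSJcone} at $n=7$).

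\medskip
Once $H_\Sigma\equiv 0$, the smooth connected hypersurface $\Sigma$ is a totally geodesic equator of $\S^{n-1}$, $D$ is the corresponding hemisphere, and the first Dirichlet eigenfunction of $-\D_{\S^{n-1}}$ on that hemisphere is $\langle \o,\nu_0\rangle^+$ up to a positive multiple; the normalization $|\n_{\S^{n-1}} g| = 1$ fixes the multiple to one, giving $u(x) = \langle x,\nu_0\rangle^+$. The hardest step will be the third one: identifying a family of profiles $\Phi$ whose associated algebraic inequality genuinely fails in every dimension $n\leq 4$ while tolerating the $7$-dimensional obstruction. This is where the full strength of the double normalization on $g$ (eigenvalue equation together with unit boundary gradient) combines with the sharp spherical Hardy constant $((n-2)/2)^2$ in a genuinely dimension-sensitive way.
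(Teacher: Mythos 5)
This theorem is cited in the paper from \cite{CJK} and \cite{JS}; the paper does not prove it (it only verifies, in Proposition \ref{prop:critdim}, that inner-stable homogeneous solutions satisfy the hypothesis \eqref{thm:JS:CJK}), so there is no internal proof to compare against. On its own merits, your Steps 1 and 2 are correct: separating variables and optimizing the radial factor via the sharp weighted Hardy inequality does reduce stability of the cone to your cross-sectional inequality \eqref{eq:plan-spher}, with no boundary vanishing imposed on $\psi$. The crux, Step 3, however, looks like a dead end. For $\psi=\Phi(g)$ the boundary integral in \eqref{eq:plan-spher} is $\Phi(0)^2\int_\Sigma H_\Sigma\,d\H^{n-2}$, which depends on $\Phi$ only through $\Phi(0)$; so ``comparing the inequalities from two independent profiles'' gives no new constraint on the curvature side, and all you can really do is minimize the left-hand side over profiles normalized by $\Phi(0)=1$. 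That infimum is strictly positive for every $n\geq 3$ (the $L^2$ term has a positive coefficient $((n-2)/2)^2$ and $\Phi(0)=1$ prevents $\Phi$ from collapsing to zero), so there is no reason it produces an obstruction specifically for $n\leq 4$.

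What makes the Caffarelli--Jerison--Kenig and Jerison--Savin argument work is a very different test function, namely $\phi=|\nabla u|\,h(r)$ (or $|\nabla u|^{q}h(r)$ in \cite{JS}), whose spherical part is $\sqrt{g^2+|\nabla_{\S^{n-1}}g|^2}$ and is \emph{not} a function of $g$ alone. That choice unlocks two differential identities which your ansatz cannot access: the refined Kato inequality for harmonic functions, $|\nabla u|\,\Delta|\nabla u|=|D^2u|^2-|\nabla|\nabla u||^2\geq \tfrac{1}{n-1}|\nabla|\nabla u||^2$ in $\{u>0\}$, whose equality case forces $D^2u$ to degenerate, and the Jacobi-type boundary relation $\partial_\nu|\nabla u|=-H$ along the smooth free boundary. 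After integration by parts these combine with the Hardy constant $((n-2)/2)^2$ in a genuinely dimension-sensitive way and force $D^2u\equiv 0$ in $\{u>0\}$ for $n\leq 4$, which directly gives flatness. Note that $D^2u\equiv 0$ is much stronger than $H_\Sigma\equiv 0$, which is all your plan aims for; and even that endpoint would be insufficient as stated, since a smooth minimal hypersurface $\Sigma\subset\S^{n-1}$ need not be a totally geodesic equator (the Clifford torus in $\S^3$ is the standard counterexample). To conclude from $H_\Sigma\equiv 0$ you would additionally need to exploit $\lambda_1(D)=n-1$ together with a volume bound $|D|\leq|\text{hemisphere}|$ and the Sperner (spherical Faber--Krahn) rigidity, none of which appears in your outline.
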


To obtain that the critical dimension for inner-stable solutions enjoys the same lower bound $n^*\geq 5$, \cite{Velichkovetal2022} prove
\begin{prop}[Proposition 7.12 of \cite{Velichkovetal2022}]\label{prop:critdim} Let $u$ be an inner-stable solution of \eqref{FBP} in $\O=\R^n$ that is homogeneous of degree one, with $\text{Sing}(u)=\{0\}$. Then $u$ satisfies \eqref{thm:JS:CJK}. In particular, $n^*\geq 5$.
\end{prop}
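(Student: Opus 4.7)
The plan is to feed the fundamental second-variation formula \eqref{eq:secvarfb0} with a carefully engineered vector field $X$ that reproduces the Lie derivative $L_X u = \phi$ on $\overline{\O^+_0(u)}$, whence \eqref{thm:JS:CJK} falls out immediately from the inner stability $\d^2 I_0(u,\R^n)[X]\geq 0$. The bound $n^*\geq 5$ then follows: any homogeneous, degree-one, inner-stable $u$ with $\mathrm{Sing}(u)=\{0\}$ in dimension $n\leq 4$ would satisfy the hypotheses of Theorem \ref{thm:JS} (smooth cone $F_0(u)\setminus\{0\}$ plus the stability inequality), forcing $F_0(u)$ to be a hyperplane and contradicting $0\in\mathrm{Sing}(u)$.

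Concretely, since $u(x)=|x|\,v(x/|x|)$ with $v\geq 0$ smooth on the sphere (and $F_0(u)\cap S^{n-1}$ consisting entirely of regular points, because $\mathrm{Sing}(u)=\{0\}$), a short computation gives $|\nabla u|^2 = v^2+|\nabla_{S^{n-1}} v|^2$, which is homogeneous of degree zero, continuous on $\overline{\O^+_0(u)\cap S^{n-1}}$, and equal to $1$ on $F_0(u)\cap S^{n-1}$. Hence $|\nabla u|$ is uniformly bounded below on $\overline{\O^+_0(u)}\setminus\{0\}$. Given $\phi\in C^\infty_c(\R^n\setminus\{0\})$, I would set
$$X:= \frac{\phi}{|\nabla u|^2}\,\nabla u \quad\text{on }\overline{\O^+_0(u)}\setminus\{0\},$$
so that $L_X u = X\cdot\nabla u = \phi$ pointwise on this set. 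Because every point of $F_0(u)\setminus\{0\}$ is regular, the free boundary is a smooth hypersurface there and $u$ extends to a smooth function across it by reflection; hence $X$ extends to a smooth vector field in an open neighborhood of $\overline{\O^+_0(u)}\setminus\{0\}$ in $\R^n\setminus\{0\}$. Multiplying by a cutoff that is $1$ near $\overline{\O^+_0(u)}\cap\mathrm{supp}\,\phi$ and vanishes deep inside $\{u=0\}$ and near the origin, I obtain $X\in C^\infty_c(\R^n\setminus\{0\};\R^n)$ avoiding the singular set, with $L_X u=\phi$ preserved on $\overline{\O^+_0(u)}$ (the modification inside $\{u=0\}$ is invisible since $\nabla u\equiv 0$ there).

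Substituting this $X$ into \eqref{eq:secvarfb0} and using $L_X u=\phi$ throughout $\O^+_0(u)$ (so that $\nabla(L_X u)=\nabla\phi$ in the volume integral) as well as on $F_0(u)$, one obtains
$$\tfrac{1}{2}\d^2 I_0(u,\R^n)[X] = \int_{\O^+_0(u)} |\nabla\phi|^2\,dx - \int_{F_0(u)} H\,\phi^2\,d\H^{n-1},$$
and inner stability gives \eqref{thm:JS:CJK}. The combination with Theorem \ref{thm:JS} closes the argument for $n^*\geq 5$ as indicated.

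The principal technical obstacle is the smooth, globally defined extension of $X$: I need $|\nabla u|^2$ and $\nabla u$ to extend smoothly across $F_0(u)\setminus\{0\}$ so that the quotient $\phi/|\nabla u|^2$ makes sense on both sides. This rests on the higher boundary regularity of $u$ at regular free boundary points (not immediate from the Lipschitz hypothesis in Definition \ref{def:stablevar}, but furnished by Caffarelli's ``flat $\Rightarrow$ smooth'' theory together with classical Schauder estimates for harmonic functions with smooth Dirichlet data). A partition-of-unity argument along the smooth hypersurface $F_0(u)\setminus\{0\}$ (which is compact once intersected with a neighborhood of $\mathrm{supp}\,\phi$, since $0$ is excluded) then glues the local reflective extensions into a single smooth object, and a final cutoff trims $X$ to have compact support in $\R^n\setminus\{0\}$.
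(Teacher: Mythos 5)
Your proposal matches the paper's proof essentially step for step: both take $X=\phi\,\nabla u/|\nabla u|^2$ in $\{u>0\}$ (using $\nabla u\neq 0$ there, which follows from Euler's identity $u=\langle\nabla u,x\rangle$ for degree-one homogeneity), extend $X$ smoothly across the smooth hypersurface $F_0(u)\setminus\{0\}$ with compact support away from the origin, and insert it into \eqref{eq:secvarfb0} so that $L_Xu=\phi$, yielding \eqref{thm:JS:CJK} and then $n^*\geq 5$ via Theorem \ref{thm:JS}. The additional justifications you give (uniform lower bound on $|\nabla u|$, higher boundary regularity via Caffarelli's theory, cutoff in the zero phase) are correct and simply make explicit what the paper treats tersely.
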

Here we give a different proof of this proposition, which is based on the formula \eqref{eq:secvarfb0} for the second inner variation $\d^2 I_0(u)$ that we derive in Proposition \ref{prop:2varsmooth} of Appendix \ref{sec:fbvars}. 

\begin{proof} Since $u$ is homogeneous of degree one, we have 
$$u(x) =\langle \n u , x \rangle \quad \text{for } x\in \{u>0\}.
$$
In particular, $\n u \neq 0$ in $\{u>0\}$ and for every test function $\phi \in C^\infty_c(\R^n\setminus \{0\})$, we can define a test vector field $X\in C^\infty_c(\R^n\setminus \{0\})$ by letting
\[
X := \frac{\n u}{|\n u|^2} \phi \quad \text{in }  \{u>0\},
\]
and extending it across the smooth hypersurface $F_0(u) \setminus \{0\}$ as a smooth vector field, compactly supported away from $0$. In this way, $L_X u = \langle X, \n u \rangle = \phi$ in $\O^+_0(u)$. Since $u$ is harmonic in $\O^+_0(u)$, smooth up to $\text{Reg}(F_0(u)) = F_0(u)\setminus \{0\}$ and an inner-stable solution to \eqref{FBP}, Proposition \ref{prop:2varsmooth} informs us that
\begin{align*}
0\leq \d^2 I_0(u, \R^n)[X] &= \int_{\{u>0\}} |\n (L_X u)|^2 \, dx - \int_{F_0(u)} H (L_X u)^2 \,  d\H^{n-1} \\
& =  \int_{\{u>0\}} |\n \phi|^2 \, dx - \int_{F_0(u)} H \phi^2 \,  d\H^{n-1},
\end{align*}
i.e.\ the stability inequality of Caffarelli-Jerison-Kenig \eqref{thm:JS:CJK} is satisfied. 
\end{proof}

\begin{proof}[Proof of Theorem \ref{thm:stablereg}]
Given the bounds $5\leq n^*\leq 7$ for the critical dimension $n^*$ established in Proposition \ref{prop:critdim}, the proof of the regularity statement now follows from Federer's classical technique of \emph{dimension reduction}, introduced in the free boundary context by Weiss \cite{weiss1998partial}. See \cite[Section 10]{velichkovfbnotes} for details. 
\end{proof}

\section{Proof of Theorem \ref{thm:mainres}}\label{sec:proofmainres2}

In this last section we will provide the proof of our second main result, Theorem \ref{thm:mainres}, which will be a consequence of the nondegeneracy Theorem \ref{thm:nondeg:main}, the convergence result Proposition \ref{prop:limsol1}, the regularity Theorem \ref{thm:stablereg} for inner-stable solutions, and ultimately, the Audrito-Serra theorem \cite{AS}.

We begin by showing that a sequence of solutions $u_{\e_k}$ of \eqref{eq:SLe}, $k\in \N$, that fulfill a $\mathcal{D}(\k,L)$ property uniformly as $\e_k \downarrow 0$ and are stable with respect to compact domain deformations, subconverges to an inner-stable solution of the one-phase FBP (Definition \ref{def:stablevar}).

\begin{prop}\label{prop:stablenodeglim} Let $\{u_{\e_k}\}_k$ be a sequence of solutions of \eqref{eq:SLe} in $B_{2R}$, with $\e_k\downarrow 0$, such that
\begin{itemize}
\item $u_\e(0)\leq T\e$, 
\item the interface of each $u_{\e_k}$ satisfies the density property $\mathcal{D}(\k,L)$ in $B_{2R}$ for some positive constants $\k$ and $L$;
\item $u_{\e_k}$ has a non-negative second inner variation with respect to $I_\e(\cdot, B_{2R})$: $\d^2 I_\e(u_{\e_k},B_{2R})\geq 0$.
\end{itemize}
Then, up to taking a subsequence, $u_{\e_k}$ converge uniformly in $B_{R/4}$ to a function $u$ that is an inner-stable solution to the one-phase FBP in $B_{R/4}$.
\end{prop}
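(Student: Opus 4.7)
The plan is to reduce the statement to a direct application of Proposition \ref{prop:limsol1} and then to check that the two axioms of Definition \ref{def:stablevar} not explicitly delivered by that proposition --- non-negativity of $\delta^2 I_0$ and the positive density of the zero phase --- pass to the limit thanks to the strong convergences Proposition \ref{prop:limsol1} provides.

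First I would pass to the unit scale by setting $v_k(y) := R^{-1} u_{\e_k}(Ry)$ and $\e_k' := \e_k/R$, so that $v_k$ solves \eqref{eq:SLe} with parameter $\e_k'$, satisfies $v_k(0)\le T\e_k'$, and inherits $\mathcal{D}(\kappa,L)$ in $B_1$ with the same constants. Proposition \ref{prop:Lip} then gives a uniform Lipschitz bound for $v_k$ on $B_1$, equivalent to $\|\nabla u_{\e_k}\|_{L^\infty(B_R)}\le C(n,f)$. Theorem \ref{thm:nondeg:main} applied to $v_k$ (valid for all large $k$, since $\e_k' \le \e_0$ eventually) produces upon unscaling the uniform nondegeneracy $\sup_{B_r(p)}u_{\e_k}\ge c\,r$ at every $p\in\{u_{\e_k}\ge\theta\e_k\}\cap B_{R/4}$ and $r\ge 2\max(L,M)\e_k$ with $B_r(p)\subset B_R$, for each $\theta\in(0,\tau]$; this extends automatically to $\theta\in(0,T]$ since $\{u_{\e_k}\ge\theta\e_k\}\subset\{u_{\e_k}\ge\tau\e_k\}$ for $\theta\ge\tau$. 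I would then invoke Proposition \ref{prop:limsol1} with $\Omega = B_{R/4}$: Arzel\`a--Ascoli (using the Lipschitz bound on the larger ball $B_R$) extracts a subsequence converging uniformly on $\overline{B_{R/4}}$ to a limit $u$, which Proposition \ref{prop:limsol1} identifies as a Lipschitz, nondegenerate, inner-stationary solution of \eqref{FBP}, harmonic in $\Omega_0^+(u)$. This furnishes properties (1), (3), (4) of Definition \ref{def:stablevar}, along with three strong convergences on compacts of $B_{R/4}$: $\nabla u_{\e_k}\to\nabla u$ in $L^2_{\mathrm{loc}}$, $\F_{\e_k}(u_{\e_k})\to \chrc{\{u>0\}}$ in $L^1_{\mathrm{loc}}$, and the Hausdorff convergence $F_{\e_k}^\tau(u_{\e_k})\to F_0(u)$.

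To handle (2), I would fix $X\in C^\infty_c(B_{R/4};\R^n)$ and inspect the integrand of formula \eqref{eq:secvar}: it splits as $\F_\e(v)$ times a smooth coefficient depending only on $X$ and its derivatives, plus terms quadratic in $\nabla v$ with smooth bounded coefficients. The two strong convergences above let both pieces pass to the limit, so $\delta^2 I_0(u,B_{R/4})[X] = \lim_k \delta^2 I_{\e_k}(u_{\e_k},B_{R/4})[X] \ge 0$. For (5), given $x\in F_0(u)\cap B_{R/4}$ and $B_r(x)\subset B_{R/4}$, I would pick $x_k\in F_{\e_k}^\tau(u_{\e_k})$ with $x_k\to x$ via Hausdorff convergence, apply $\mathcal{D}(\kappa,L)$ at $x_k$ with outer radius $2r$ (which fits inside $B_{2R}$ for $k$ large and satisfies $2r\ge L\e_k$ eventually) to get $|Z^{\tau/4}_{\e_k}\cap B_r(x_k)|\ge \kappa|B_r|$, and transfer the measure estimate to $u$ by a reverse-Fatou argument: uniform convergence guarantees $\chrc{Z^{\tau/4}_{\e_k}}$ eventually vanishes on $\{u>0\}$, hence $|B_{r+\delta}(x)\cap\{u=0\}|\ge \kappa|B_r|$ for every $\delta>0$, and $\delta\downarrow 0$ yields (5) with constant $\kappa$.

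The real obstacle is (2): ensuring lower semicontinuity of the second inner variation along the sequence. The decisive ingredient is the $L^1_{\mathrm{loc}}$ convergence $\F_{\e_k}(u_{\e_k})\to \chrc{\{u>0\}}$, which would fail if the interfaces fattened in the limit; this is precisely what the uniform nondegeneracy from Theorem \ref{thm:nondeg:main}, fueled by the $\mathcal{D}(\kappa,L)$ hypothesis, rules out. The density condition is thus doing double duty: it preserves stability under the limit and simultaneously supplies the positive zero-phase density (5) for the limiting free boundary problem.
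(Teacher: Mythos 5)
Your proposal is correct and follows the paper's proof almost line by line: rescale, invoke Proposition \ref{prop:Lip} and Theorem \ref{thm:nondeg:main} to verify the hypotheses of Proposition \ref{prop:limsol1}, pass the second inner variation through the limit via the $L^2_{\mathrm{loc}}$-convergence of gradients and $L^1_{\mathrm{loc}}$-convergence of the potentials, and transfer the $\mathcal{D}(\kappa,L)$ density bound to the limit via Hausdorff convergence of the interfaces and a Fatou-type argument. The only cosmetic difference is that your choice of radius in the positive-density step yields the constant $\kappa$ rather than the paper's $\kappa/4^n$, which is immaterial.
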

\begin{proof}
After, rescaling we may assume that $R=1$. Since $u_k:=u_{\e_k}(0)\leq T\e$, we know by Proposition \ref{prop:Lip}, that $u_k$ are uniformly Lipschitz continuous in $B_1$. Furthermore, the nondegeneracy result of Theorem \ref{thm:nondeg:main} tells us that for each $\theta \in (0,\tau]$, there are positive constants $\e_0$, $c$ and $\lambda := 2\max(L, M)$ such that if $\e_k\leq \e_0$, then
$$
\sup_{B_r(x)} u_{k} \geq cr \quad \text{for all }  x\in \{u_k \geq \theta \e\}\cap B_{1/4}  \text{ and all } r\geq \l \e_k, \text{ such that } B_r(p)\subset B_{1/4}. 
$$
The hypotheses of Proposition \ref{prop:limsol1} are therefore met in $\O:=B_{1/4}$, so we can infer that the sequence $\{u_{k}\}$ subconverges on $B_{1/4}$ to a nonnegative Lipschitz continuous function $u$ that is harmonic in its positive phase $\O_0^+(u)$ and is a non-degenerate inner-stationary solution of \eqref{FBP}. Because the same proposition gives us that $\n u_k \to \n u$ in $L^2(B_{1/4})$ and $\F_{\e_k}(u_k)\to \F_0(u)$ in $L^1(B_{1/4})$, we get that for any fixed test fector field $X\in C^\infty(B_{1/4}, \R^n)$,
\[
\d^2 I_0 (u, B_{1/4})[X] = \lim_{k\to\infty} \d^2 I_{\e_k}(u_{k}, B_{1/4})[X] \geq 0, 
\]
i.e. $u$ is a stable critical point of $I_0$ with respect to compactly supported deformations of $B_{1/4}$. 

We have thus confirmed that $u$ satisfies properties (1)-(4) of Definition \ref{def:stablevar} in $B_{1/4}$. To conclude that $u$ is an inner-stable solution of \eqref{FBP} in $B_{1/4}$, it remains to check the positive density of the zero phase along the the free boundary $F_0(u)$. Let $x\in F_0(u)\cap B_{1/4}$ and $B_r(x)\subseteq B_{1/4}$. By the Hausdorff convergence of the interface of $u_k$ to $F_0(u)$ (statement (c) of Proposition \ref{prop:limsol1}), we know that there exists a point $p\in B_{r/2}(x)$ that belongs to $\{\tau \e_k \leq u_k\leq T\e_k\}\cap B_{1/4}$ for all large $k$. Now, as $u_k$ satisfies the density property $\mathcal{D}(\k, L)$, we have
\begin{equation}\label{prop:stablenodeglim:eq1}
|\{u_k\leq (\tau/4)\e_k\}\cap B_{r/4}(p)| \geq \k |B_{r/4}| \quad \text{as long as} \quad r/2\geq L \e_k.
\end{equation}
Moreover, as $\chrc{B_{r/4}(p)\cap \{u>0\}}\leq \liminf_{k\to\infty} \chrc{B_{r/4}(p)\cap \{u_k>T\e_k\}}$, we get by Fatou's lemma that
\begin{equation}\label{prop:stablenodeglim:eq2}
|\{u=0\}\cap B_{r/4}(p)|\geq \limsup_{k\to\infty} |\{u_k\leq T\e_k\} \cap B_{r/4}(p)|
\end{equation}
Combining \eqref{prop:stablenodeglim:eq1} and \eqref{prop:stablenodeglim:eq2}, we obtain the desired density bound:
\[
|\{u=0\}\cap B_r(x)| \geq |\{u=0\}\cap B_{r/4}(p)| \geq \limsup_{k\to\infty} |B_{r/4}(p)\cap \{u_k\leq (\tau/4) \e_k\}| \geq \frac{\k}{4^n} |B_r|.
\]
\end{proof}

We are now finally in a position to prove Theorem \ref{thm:mainres}.
\begin{proof}[Proof of Theorem \ref{thm:mainres}]
Without loss of generality, assume that $u(0)= T$. Let $R_k\uparrow \infty$ and let $\e_k:=1/R_k$. Consider the blow-downs of $u$ at $0$,
\[
u_{\e_k}(x):=\e_k u(x/\e_k),
\] 
which are solutions of \eqref{eq:SLe} in $\R^n$, that are stable with respect to compact domain deformations. Furthermore, condition \eqref{thm:mainres} says that the interface of each $u_{\e_k}$ satisfies the density property $\mathcal{D}(\k, L)$ in $\R^n$.  Invoking Proposition \ref{prop:stablenodeglim}, we see that $u_{\e_k}$ subconverge uniformly on compact subsets of $\R^n$ to a global inner-stable solution $u_0$ of the one-phase FBP \eqref{FBP}.  
Given that $n< n^*$, Remark \ref{rem:entstable} informs us that $u_0$ actually has a flat free boundary and equals $x_n^+$, in an  appropriate Euclidean coordinate system. From the Hausdorff distance convergence result of Proposition \ref{prop:limsol1}, we see that 
\[
\{x_n > -\d_k\}\cap B_{R_k} \supseteq \{u\geq \tau \} \cap B_{R_k} \supset \{u\geq T \} \cap B_{R_k} \supseteq \{x_n> \d_k\}\cap B_{R_k},
\]
with the aspect ratio $\d_k/R_k\to$ as $k\to\infty$. We may thus invoke the rigidity result \cite[Theorem 1.4]{AS} of Audrito and Serra and conlude that $u(x) = V(x_n)$, where $V$ is the solution of \eqref{eq:1d}.
\end{proof}



\appendix

\section{First and second inner variations of $I_\e$ in an oriented Riemannian manifold}\label{sec:derivationvars}


Let $(M,g)$ be an oriented Riemannian manifold with induced volume form $\vol_g$. In this section we will compute expressions for the first and the second \emph{inner} variations of the functional
\[
I_\e(u,M)=\int_M \left(|\n_g u|_g^2 + \F_\e(u) \right) \, \vol_g, \quad u\in H^1((M,g)), \quad \e\geq 0,
\]
i.e.\ with respect to deformations of $M$, generated by compactly supported vector fields. The norm of the gradient $\n_g u$ is measured with respect to the metric $g$ and we note that
\[
|\n u|_g^2 := g(\n_g u, \n_g u) =  \bar{g} (du, du) =: |du|_g^2,
\]
where $\bar{g}_p$ denotes the induced inner product on covectors $\xi\in T^*_p(M)$. Take $X\in \G(TM)$ a smooth, compactly supported vector field on $M$ and let $\phi:\R\times M\to M$ be its associated flow:
$$
\de_t \phi_t(x) = X(\phi_t(x)), \quad \phi_0(x) = x. 
$$
For all $t\in \R$, $\phi_t: M\to M$ defines a diffeomorphism of $M$ onto itself with inverse $(\phi_t)^{-1}=\phi_{-t}$, generated by $(-X)$. 
Fix a function $u\in H^1_\text{loc}(M)$ and set 
$$u_t(y):=u((\phi_t)^{-1}(y)) = u(\phi_{-t}(y)) = (\phi_{-t})^*u(y),
$$ 
where $(\phi_{-t})^*$ denotes the pullback by $\phi_{-t}$. Then $u_t\in H^1_\text{loc}(M)$ and we are interested in computing
\[
\d I_\e(u,M)[X]:= \left.\frac{d}{dt}\right|_{t=0} I_\e(u_t, D) \quad\text{and} \quad \d^2  I_\e(u,M)[X]:= \left.\frac{d^2}{dt^2}\right|_{t=0} I_\e(u_t, D),
\]
where $D\subset M$ is a compact subset of $M$, containing the support of $X$. 

\begin{prop}\label{prop:vars} Assume the above setup. Then the first and second inner variations of $I_\e(\cdot, M)$ at $u$ along the vector field $X$ are given by 
\[
\d I_\e(u, M)[X] = \int_M V_1(u; X) \vol_g \quad \text{and} \quad  \d^2 I_\e(u, M)[X] = \int_M V_2(u; X) \vol_g, 
\]
where
\begin{align}
V_1(u; X) & :=(|d u|_g^2 + \F_\e(u))\text{div}_g X + [L_X\bar{g}](du,du), \label{ap:firstvar} \\
V_2(u; X) & := (|d u|_g^2 + \F_\e(u))\text{div}_g\left((\text{div}_g X) X\right) + 2 [L_X\bar{g}](du,du) \text{div}_g X + [L_X^2 \bar{g}](du,du), \label{ap:secondvar}
\end{align}
and $L_X$ denotes the $Lie$ derivative along $X$.
\end{prop}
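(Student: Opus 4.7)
The plan is to reduce the computation to the standard Lie-derivative calculus on the diffeomorphism group, by pulling back the whole integrand by the flow $\phi_t$. The crucial observation is that
$$
\phi_t^* u_t = \phi_t^*(\phi_{-t})^* u = u,
$$
so $\phi_t^* (du_t) = d(\phi_t^* u_t) = du$ and $\phi_t^*[\F_\e(u_t)] = \F_\e(u)$. Using that $\phi_t$ is a diffeomorphism, we rewrite
$$
I_\e(u_t, M) = \int_M \phi_t^*\!\left[\bigl(\bar g(du_t, du_t) + \F_\e(u_t)\bigr)\vol_g\right] = \int_M \Bigl[(\phi_t^*\bar g)(du, du) + \F_\e(u)\Bigr]\,\phi_t^*\vol_g.
$$
In this form the $t$-dependence sits only in the pulled-back tensors $\phi_t^*\bar g$ and $\phi_t^*\vol_g$, while $du$ and $\F_\e(u)$ are $t$-independent. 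Everything is compactly supported in a fixed compact $D\supset \mathrm{supp}\,X$, so we may differentiate under the integral sign without concern.

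Next I would invoke the standard identities
$$
\left.\frac{d}{dt}\right|_{t=0}\phi_t^* T = L_X T, \qquad \left.\frac{d^2}{dt^2}\right|_{t=0}\phi_t^* T = L_X^2 T,
$$
valid for any smooth tensor field $T$, together with the Cartan-type formula $L_X \vol_g = (\div_g X)\,\vol_g$. Differentiating once at $t=0$ and applying the product rule gives
$$
\d I_\e(u,M)[X] = \int_M \Bigl[(L_X \bar g)(du,du) + \bigl(|du|_g^2 + \F_\e(u)\bigr)\div_g X\Bigr]\vol_g,
$$
which is exactly the claimed formula \eqref{ap:firstvar}.

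For the second variation, differentiating the same expression twice at $t=0$ yields three terms by the product rule:
$$
\d^2 I_\e(u,M)[X] = \int_M \Bigl\{(L_X^2\bar g)(du,du) + 2(L_X\bar g)(du,du)\,\div_g X + \bigl(|du|_g^2 + \F_\e(u)\bigr) V\Bigr\}\vol_g,
$$
where $V\,\vol_g := L_X^2\vol_g$. To match \eqref{ap:secondvar} it then suffices to verify the identity $L_X^2\vol_g = \div_g\!\bigl((\div_g X) X\bigr)\vol_g$. This follows from a one-line calculation:
$$
L_X^2\vol_g = L_X\bigl((\div_g X)\vol_g\bigr) = X(\div_g X)\vol_g + (\div_g X)^2\vol_g = \div_g\!\bigl((\div_g X) X\bigr)\vol_g,
$$
using $L_X f = X(f)$ for a scalar $f$, and the product rule $\div_g(fY) = f\div_g Y + Y(f)$.

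The main conceptual step is really the first one: the realization that $\phi_t^* u_t = u$ decouples the function from the deformation, isolating all the $t$-dependence in the pulled-back metric tensor and volume form, where Lie-derivative calculus is off-the-shelf. I do not anticipate serious obstacles beyond the bookkeeping; the only place requiring minor care is ensuring that $\phi_t^*\bar g$ is interpreted as a $(2,0)$-tensor acting on covectors, so that the evaluation $(\phi_t^*\bar g)(du,du)$ makes sense. The computation of the explicit coordinate expressions for $(L_X\bar g)^{ij}$ and $(L_X^2\bar g)^{ij}$ alluded to in the Euclidean formulas \eqref{eq:firstvar}--\eqref{eq:secvar} is then a routine unpacking of the Lie-derivative formula for a symmetric $(2,0)$-tensor.
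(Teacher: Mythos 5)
Your proposal is correct and takes essentially the same approach as the paper: both isolate the $t$-dependence by pulling back the integrand through $\phi_t$ (which is the paper's change of variables $y=\phi_t(x)$), exploit $\phi_t^*\phi_{-t}^*=\mathrm{id}$ to get $\phi_t^*(du_t)=du$, and then differentiate the pulled-back tensors $\phi_t^*\bar g$ and $\phi_t^*\vol_g$ via the Lie-derivative flow formula, closing with the identities $L_X\vol_g=(\div_g X)\vol_g$ and $L_X^2\vol_g=\div_g\bigl((\div_g X)X\bigr)\vol_g$.
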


We refer the reader to the book of Lee \cite[Chapter 12]{LeeSM} for a discussion of the many nice properties that the Lie derivative enjoys. We recall that in local coordinates $\{x^i\}$ of $M$, the Lie derivative of a $(2,0)$ tensor field $S = S^{ij} \de_{x^i} \otimes \de_{x^j}$ takes the form
\begin{align*}
(L_X S)^{ij} &= (L_X S)(dx^i, dx^j) = X\cdot S(dx^i, dx^j) - S(L_X d x^i, dx^j) - S(d x^i, L_Xdx^j) \\
&= X^k \de_k S^{ij} - S(d (dx^i(X)), dx^j) - S(dx^i, d (dx^j(X))) \\
& = X^k \de_k S^{ij} - S^{kj} \de_k X^i - S^{ik} \de_k X^j,
\end{align*}
where we have adopted the standard summation convention over repeated indices. For a domain $M=\O\subseteq \R^n$ of Euclidean space, equipped with the Euclidean metric $g=\d$, the expressions for $L_X \bar{\d}$ and $L_X^2 \bar{\d}$ in the standard coordinates then take the form
\begin{align}
(L_X \bar{\d})^{ij} &= -(\de_j X^i + \de_i X^j); \label{ap:eqnsLX1}\\
(L_X^2 \bar{\d})^{ij} &= -X^k \de_k (\de_j X^i + \de_i X^j) + (\de_j X^k + \de_k X^j)\de_k X^i +  (\de_i X^k + \de_k X^i)\de_k X^j. \label{ap:eqnsLX2}
\end{align}

\begin{proof}[Proof of Proposition \ref{prop:vars}]
After changing variables, $y=\phi_t(x)$, we get
\begin{align*}
I_\e(u_t,D) &= \int_D |d [\phi_{-t}^*u]|_g^2 + \F_\e(\phi_{-t}^*u) \, \vol_g(y) = \int_D \left(\phi_t^*(|d [\phi_{-t}^*u]|_g^2)+ \F_\e(u(x)) \right) (\phi_t^*\vol_g)(x) \\
&= \int_D (h_t + \F_\e(u)) \nu_t, \quad \text{where } \nu_t:= \phi_t^* \vol_g \quad \text{and} \\
h_t &:= \phi_t^*(|d [\phi_{-t}^*u]|_g^2) = \phi_t^* \left[\bar{g}\left(d\phi_{-t}^*u, d\phi_{-t}^*u \right)\right)].
\end{align*}
Since the differential $d$ commutes with pullbacks, we can rewrite the expression for $h_t$ as:
\begin{equation*}
h_t = \phi_t^* \left[\bar{g}\left(\phi_{-t}^*(du), \phi_{-t}^*(du) \right)\right)].
\end{equation*}
We can view $\bar{g}\in \Gamma(TM\otimes TM)$ as a contravariant $(2,0)$ tensor field and $\bar{g}(\omega_1, \omega_2)$, where $\o_1, \o_2$ are 1-forms, as the corresponding contraction of the (2,2) tensor field $\bar{g}\otimes \omega_1\otimes \omega_2$. Using the fact that pullbacks and contractions commute, and that pullbacks distribute over tensor products, we can further simplify
\begin{equation}\label{ap:h_t}
h_t = \left(\phi_t^*\bar{g}\right)\left(\phi_t^* [\phi_{-t}^*(du)], \phi_t^*[\phi_{-t}^*(du)] \right) = \left(\phi_t^*\bar{g}\right)(du, du)=:\mu_t(du,du), 
\end{equation}
since $\phi_t^* \phi_{-t}^* = (\phi_{-t}\circ \phi_t)^*=\text{id}^* = \text{id}$. In \eqref{ap:h_t}, $\mu_t:=\phi_t^*\bar{g}$ denotes the pullback of the tensor field $\bar{g}$ by $\phi_t$. The $t$-derivatives of the tensor fields $\mu_t$ and $\nu_t$  can now be computed using the celebrated formula \cite[Proposition 12.36]{LeeSM}
\[
\frac{d}{dt} \phi_t^* S = \phi_t^* (L_X S) \quad \text{for any tensor field } S.
\] 
We obtain
\begin{equation}\label{ap:mu_t}
\frac{d}{dt} \mu_t = \phi_t^* (L_X\bar{g}) \quad \text{and} \quad \frac{d^2}{dt^2} \mu_t =  \phi_t^* (L_X^2 \bar{g}).
\end{equation}
Similarly,
\begin{equation}\label{ap:nu_t}
\frac{d}{dt} \nu_t = \frac{d}{dt}  \phi_t^*\vol_g = \phi_t^* (L_X \vol_g)  \quad \text{and} \quad \frac{d^2}{dt^2} \nu_t =  \phi_t^* (L_X^2 \vol_g).
\end{equation}
It is well known (\cite[pp.\ 425]{LeeSM}) that the Lie derivative of $\vol_g$ computes to 
\begin{equation}\label{ap:Lievol}
L_X \vol_g = (\text{div}_g X) \vol_g,
\end{equation}
and by using the property that $L_X$ is a derivation, we can further calculate
\begin{align}
L_X^2 \vol_g &= L_X (\text{div}_g X) \vol_g + (\text{div}_g X) L_X\vol_g = \left(d (\text{div}_g X)(X) + (\text{div}_g X)^2 \right)\vol_g \notag \\
& = \text{div}_g\left((\text{div}_g X) X\right)\vol_g. \label{ap:Lie2vol}
\end{align}
Based on the preceding observations, we see that $t\to I_\e(u_t, D)$ is a smooth function, whose first derivative at $t=0$ is given by
\begin{align*}
\d I_\e(u)[X] &= \left.\frac{d}{dt}\right|_{t=0}\int_D (\mu_t(du, du) + \F_\e(u)) \nu_t =\int_D \left(\left(\mu_0(du, du) + \F_\e(u)\right)\dot{\nu}_0 + \dot{\mu}_0\nu_0\right)   \\
&= \int_M \left((|d u|_g^2 + \F_\e(u))\text{div}_g X + [L_X\bar{g}](du,du) \right)\vol_g  
\end{align*}
and whose second derivative at $t=0$ is
\begin{align*}
\d^2 I_\e(u)[X] &= \int_D \left(\mu_0(du, du) + \F_\e(u)\right)\ddot{\nu}_0 + 2 \dot{\mu}_0(du, du)\dot{\nu}_0+ \ddot{\mu}_0\nu_0  \\
& = \int_M \left((|d u|_g^2 + \F_\e(u))\text{div}_g\left((\text{div}_g X) X\right) + 2 [L_X\bar{g}](du,du) \text{div}_g X + [L_X^2 \bar{g}](du,du) \right)\vol_g, 
\end{align*}
according to the computations in \eqref{ap:h_t}--\eqref{ap:Lie2vol}.
\end{proof}

We end this section by fleshing out the divergence structure in the integrands $V_1$ and $V_2$ of \eqref{ap:firstvar} and \eqref{ap:secondvar}. For ease of notation, we will drop subscripts $g$ and denote
\[
e := |du|^2 + \F_\e(u).
\]

\begin{lemma}\label{lem:V1} Assume that $u\in C^2(W)$ and $\F_\e(u)\in C^1(W)$ in an open subset $W\subseteq M$, $\e\geq 0$.  Then 
\begin{equation}\label{ap:V1div}
V_1 = \text{div}\left(e X  - 2 (L_X u)\n u \right) + \left(2\D u - \F_\e'(u)\right)(L_X u) \quad \text{in } W.
\end{equation}
\end{lemma}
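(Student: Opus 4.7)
The plan is to massage $V_1 = e\,\text{div}\, X + (L_X\bar g)(du,du)$ into the divergence-plus-source form on the right-hand side of \eqref{ap:V1div} by exploiting two standard properties of the Lie derivative: (i) it is a derivation with respect to tensor products and contractions, and (ii) it commutes with the exterior derivative $d$. No heavy computation in local coordinates should be necessary; the whole identity follows from one tensorial manipulation together with the Leibniz rule for the divergence.

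The key step is the identity
\begin{equation*}
(L_X\bar g)(du,du) \;=\; X\bigl(|du|_g^2\bigr) \;-\; 2\,\bar g(L_X du,\,du) \;=\; X\bigl(|du|_g^2\bigr) \;-\; 2\,g\bigl(\nabla(L_Xu),\,\nabla u\bigr),
\end{equation*}
where the first equality uses that $L_X$ is a derivation on the tensor algebra (applied to the contraction $\bar g(du,du) = |du|_g^2$), and the second uses $L_X du = d(L_Xu)$ together with $\bar g(d\phi,d\psi) = g(\nabla\phi,\nabla\psi)$. I will justify both facts by a one-line reference to Lee's book (as was already done for \eqref{ap:Lievol}).

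With this identity in hand, the rest is just two applications of the Leibniz rule for $\text{div}_g$. First, $\text{div}_g(eX) = X(e) + e\,\text{div}_g X$, and using $e = |du|_g^2 + \F_\e(u)$ together with $X(u) = L_Xu$,
\begin{equation*}
X(e) \;=\; X\bigl(|du|_g^2\bigr) \;+\; \F_\e'(u)\,L_Xu.
\end{equation*}
Second, since $u \in C^2(W)$,
\begin{equation*}
\text{div}_g\bigl((L_Xu)\nabla u\bigr) \;=\; (L_Xu)\,\Delta_g u \;+\; g\bigl(\nabla(L_Xu),\,\nabla u\bigr).
\end{equation*}
Assembling these three identities, the terms $\F_\e'(u)\,L_Xu$ and $\pm 2(L_Xu)\Delta_g u$ on the right of \eqref{ap:V1div} cancel exactly against the corresponding source terms, leaving
\begin{equation*}
\text{div}_g(eX) - 2\,\text{div}_g\bigl((L_Xu)\nabla u\bigr) + (2\Delta_g u - \F_\e'(u))\,L_Xu \;=\; e\,\text{div}_g X + X\bigl(|du|_g^2\bigr) - 2\,g\bigl(\nabla(L_Xu),\,\nabla u\bigr),
\end{equation*}
which equals $V_1$ by the key identity above.

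The only point that requires a bit of care is making sure that the hypothesis $\F_\e(u) \in C^1(W)$ is enough to justify $X(\F_\e(u)) = \F_\e'(u)\,L_Xu$ pointwise in $W$ (for $\e>0$ this is automatic from smoothness of $\F_\e$; for $\e=0$ it is the assumption). Otherwise the computation is purely algebraic and I don't anticipate any real obstacle.
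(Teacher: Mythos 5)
Your proposal is correct and follows essentially the same route as the paper's proof: both rewrite $e\,\mathrm{div}\,X$ as $\mathrm{div}(eX) - X(e)$, expand $(L_X\bar g)(du,du)$ via the Leibniz rule for $L_X$ on the contraction $\bar g(du,du)$, use $L_X du = d(L_Xu)$, and absorb $g(\nabla(L_Xu),\nabla u)$ into $\mathrm{div}((L_Xu)\nabla u)$. The only cosmetic difference is that the paper writes $L_X|du|^2$ where you write $X(|du|_g^2)$ — these are the same thing for a scalar.
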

\begin{proof}
We compute in $W$: 
\begin{align*}
 (|d u|^2 + \F_\e(u))\text{div} X & = \text{div}\left(e X \right) - L_X |du|^2  - \F_\e'(u) L_X u; \\ 
 [L_X\bar{g}](du,du)  &= L_X |du|^2 - 2 \bar{g}(L_X du, du) = L_X |du|^2 - 2 \bar{g}(d(L_X u), du) = \\
& = L_X |du|^2 - 2 g(\n (L_X u), \n u) = L_X |du|^2 - 2 \text{div}((L_X u)\n u) + 2 \D u (L_X u),
\end{align*} 
where we used the fact that $L_X$ commutes with the differential $d$. Adding the two equalities above, we obtain \eqref{ap:V1div}.
\end{proof}

As an easy corollary, we get the following well known result.
\begin{prop}\label{prop:crit1crit2} Let $\e>0$. If $u\in C^2(M)$ and $\F_\e(u)\in C^1(M)$, then
\[
\d I_\e(u,M)[X] = -I_\e'(u,M)[L_X u]
\]
for all compactly supported, smooth vector fields $X\in \G(TM)$. In particular, if $u\in C^2(M)$ is a positive critical point of $I_\e$, then the first inner variation $\d I_\e(u,M)=0$.
\end{prop}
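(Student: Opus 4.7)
The plan is to apply Lemma \ref{lem:V1} and integrate the pointwise identity \eqref{ap:V1div} over $M$. Since $X$ is compactly supported, so is $L_X u = X(u)$ (it vanishes wherever $X$ does), and therefore both $eX$ and $(L_X u)\n u$ have compact support in $M$. By the divergence theorem,
\[
\int_M \text{div}\bigl(eX - 2(L_X u)\n u\bigr)\,\vol_g = 0,
\]
so combining Proposition \ref{prop:vars} with \eqref{ap:V1div} reduces the first inner variation to
\[
\d I_\e(u, M)[X] = \int_M \bigl(2\D u - \F_\e'(u)\bigr)(L_X u)\,\vol_g.
\]

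The next step is to identify this remainder with $-I_\e'(u,M)[L_X u]$. Directly from the definition of the outer variation,
\[
I_\e'(u,M)[\phi] = \int_M \bigl(2\, g(\n u, \n \phi) + \F_\e'(u)\phi\bigr)\,\vol_g
\]
for compactly supported $\phi$. Since $u\in C^2(M)$ and $\phi$ has compact support, Stokes' theorem allows us to integrate by parts the gradient term, giving $I_\e'(u,M)[\phi] = \int_M (-2\D u + \F_\e'(u))\phi\,\vol_g$. Plugging in $\phi = L_X u$, which lies in $C^1_c(M)$ because $X$ is compactly supported and $u\in C^2$, yields exactly the identity $\d I_\e(u,M)[X] = -I_\e'(u,M)[L_X u]$.

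For the ``in particular'' statement, the semilinear elliptic regularity recalled in the discussion following \eqref{eq:SLe} guarantees that a positive $C^2$ critical point of $I_\e$ classically solves $\D u = f_\e(u) = \F_\e'(u)/2$ on $M$. Consequently, the factor $2\D u - \F_\e'(u)$ vanishes pointwise, and the integral above is zero for every compactly supported $X\in \G(TM)$, yielding $\d I_\e(u,M) = 0$. I do not anticipate any real obstacle: the proposition is essentially a repackaging of Lemma \ref{lem:V1} via the divergence theorem, and the only mildly delicate point is verifying that $L_X u$ is a bona fide outer-variation test function, which follows immediately from the compact support of $X$ and the $C^2$ regularity of $u$.
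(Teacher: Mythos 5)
Your proof is correct and is exactly the argument the paper intends: the result is presented as ``an easy corollary'' to Lemma \ref{lem:V1}, and you carry out precisely that reduction by integrating the divergence identity \eqref{ap:V1div} over $M$, discarding the divergence term via the compact support of $X$ (hence of $L_X u$), and integrating by parts once more to identify the remaining integral with $-I_\e'(u,M)[L_X u]$. One remark: the final clause can be obtained even more directly --- since a critical point satisfies $I_\e'(u,M)[\phi]=0$ for every admissible test function $\phi$, the vanishing of $\d I_\e(u,M)$ follows immediately from the main identity without needing to invoke the Euler--Lagrange PDE $\D u = f_\e(u)$ --- though your argument is equally valid.
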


In the next lemma we provide the divergence structure within $V_2$.
\begin{lemma}\label{lem:V2} Assume that $u\in C^3(W)$ and $\F_\e(u)\in C^1(W)$ in an open subset $W\subseteq M$, $\e\geq 0$. Then we have in $W$:
\begin{align}
V_2 &= \div\, Y - \left((\div X) \F_\e'(u) L_X u + 2 \D u (L_X^2 u) \right) + 2 |d (L_X u)|^2, \quad \text{where}\label{ap:V2div1} \\ 
Y &= \left(e\,  \div X + L_X |du|^2 - 4 g(\n (L_X u), \n u) \right)X + 2 (L_X^2u) \n u. \label{ap:V2div2}
\end{align}
\end{lemma}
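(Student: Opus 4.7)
The plan is to expand each of the three terms of $V_2$ in \eqref{ap:secondvar} and reorganize them into the desired form. For the first term, I use $\div(fX) = f\div X + L_X f$ with $f=e$ to get
\[
e\,\div((\div X) X) = \div\bigl(e(\div X)X\bigr) - (\div X) L_X e,
\]
and note that $L_X e = L_X|du|^2 + \F_\e'(u) L_X u$. For the second term, I appeal to the identity
\[
[L_X\bar g](du,du) = L_X |du|^2 - 2 g(\n(L_X u),\n u),
\]
which was already derived inside the proof of Lemma \ref{lem:V1} via the derivation property of $L_X$ and the commutation $L_X \circ d = d\circ L_X$.

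The main work is to handle the third term $[L_X^2\bar g](du,du)$. For this I would iterate the derivation formula
\[
L_X\bigl[\bar g(\omega_1,\omega_2)\bigr] = (L_X\bar g)(\omega_1,\omega_2) + \bar g(L_X\omega_1,\omega_2) + \bar g(\omega_1, L_X\omega_2)
\]
twice with $\omega_1=\omega_2=du$, using $L_X du = d(L_X u)$ and $L_X^2 du = d(L_X^2 u)$. This yields
\[
[L_X^2\bar g](du,du) = L_X^2|du|^2 - 4 (L_X\bar g)(d(L_X u),du) - 2 g(\n(L_X^2 u),\n u) - 2|d(L_X u)|^2.
\]
A second application of the derivation formula to the pair $(d(L_X u),du)$ rewrites the middle term as
\[
(L_X\bar g)(d(L_X u),du) = L_X\bigl[g(\n(L_X u),\n u)\bigr] - g(\n(L_X^2 u),\n u) - |d(L_X u)|^2,
\]
and replacing $g(\n(L_X^k u),\n u) = \div((L_X^k u)\n u) - (L_X^k u)\D u$ for $k=1,2$ produces a clean expression for $[L_X^2\bar g](du,du)$ in divergence form plus lower order terms.

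Once these three expansions are in hand, I add them and collect terms. The pieces $-(\div X)\F_\e'(u)L_X u$, $-2(L_X^2 u)\D u$, and $+2|d(L_X u)|^2$ fall out immediately as the claimed non-divergence remainder. What remains must be $\div Y$ with $Y$ as in \eqref{ap:V2div2}; verifying this is a direct computation using $\div(fX) = f\div X + L_X f$ applied to each of the three scalar factors $e\div X$, $L_X|du|^2$, and $-4g(\n(L_X u),\n u)$, together with the already exposed $\div\bigl((L_X^2 u)\n u\bigr)$ term. The main obstacle is purely bookkeeping: keeping track of the sign and factor conventions when iterating the Lie derivative on $\bar g(du,du)$, and recognizing that the $L_X$-derivatives of the intermediate scalar functions (such as $L_X^2|du|^2$ and $L_X[g(\n(L_X u),\n u)]$) are precisely what gets absorbed into $\div Y$ via $L_X f = X\cdot \n f$.
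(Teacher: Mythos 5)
Your proposal is correct and uses essentially the same approach as the paper: the iterated Leibniz/derivation property of $L_X$ on $\bar g(du,du)$, the commutation $L_X\circ d = d\circ L_X$, the conversion $g(\n\psi,\n u)=\div(\psi\n u)-\psi\D u$, and repeated use of $(\div X)f + L_X f = \div(fX)$. The only difference is bookkeeping: the paper pairs one copy of $[L_X\bar g](du,du)\div X$ with $[L_X^2\bar g](du,du)$ in its item (3) to expose the divergence $\div([L_X\bar g](du,du)\,X)$ directly, whereas you expand $[L_X^2\bar g](du,du)$ on its own and recollect divergences at the end; both reorganizations yield the same $Y$.
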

\begin{proof}
We manipulate the terms comprising $V_2$ as follows:
\begin{align*}
(1)\quad & (|d u|^2 + \F_\e(u))\text{div}\left(\text{div} X X\right) = \text{div}\left(e \, \text{div} X X\right) - (\div X) L_X |du|^2 - (\div X) \F_\e'(u) L_X u;  \\
(2) \quad & [L_X\bar{g}](du,du) \text{div} X = (\div X) L_X |du|^2 - 2 (\div X) \bar{g}(L_X du, du); \\
(3) \quad & [L_X\bar{g}](du,du) \text{div} X  + [L_X^2 \bar{g}](du,du)  =  \\
  &=  [L_X\bar{g}](du,du) \text{div} X +  L_X \left([L_X \bar{g}](du,du)\right) - 2 [L_X \bar{g}](L_X du, du) \\
 &= \div\left([L_X\bar{g}](du,du) X \right) - 2 \left( L_X \left(\bar{g}(L_X du, du)\right) - \bar{g}(L_X^2 du, du) - \bar{g}(L_X du, L_X du) \right)
\end{align*}

Hence, after adding the three equalities,  we obtain
\begin{align*}
V_2 & = \div\, \tilde{Y} -  (\div X) \F_\e'(u) L_X u - 2 \left( (\div X) \bar{g}(L_X du, du) +L_X \left(\bar{g}(L_X du, du)\right) \right) + \notag \\
& \qquad \quad \, \, \, + 2\bar{g}(L_X^2 du, du) + 2 |L_X du|^2  \notag \\
& = \div\, \bar{Y} - 2 \div\left(\bar{g}(L_X(du), du) X \right) + 2\bar{g}(L_X^2 du, du) + 2 |L_X du|^2 \notag\\
& = \div\, \bar{Y} -  (\div X) \F_\e'(u) L_X u + 2 |L_X du|^2 + 2\bar{g}(L_X^2 du, du)\notag \\
& = \div\, \bar{Y} -  (\div X) \F_\e'(u) L_X u + 2 |L_X du|^2 + 2\bar{g}(d (L_X^2 u), du) \notag\\
& = \div\, \bar{Y} -  (\div X) \F_\e'(u) L_X u + 2 |L_X du|^2 + 2g(\n(L_X^2 u), \n u) \notag \\
& = \div\, \bar{Y} -  (\div X) \F_\e'(u) L_X u + 2 |L_X du|^2 + 2 \div \left( L_X^2 u \n u \right) - 2 \D u (L_X^2 u) \notag \\
& = \div\, Y - \left((\div X) \F_\e'(u) L_X u + 2 \D u (L_X^2 u) \right) + 2 |d (L_X u)|^2 
\end{align*}
where 
\begin{align*}
Y &:= e (\text{div} X) X + [L_X\bar{g}](du,du) X  - 2 \bar{g}(L_X(du), du) X + 2 L_X^2u \n u \notag \\
& = \left(e\,  \div X + L_X |du|^2 - 4 g(\n (L_X u), \n u) \right)X + 2 (L_X^2u) \n u. \label{ap:V2div2}
\end{align*}
\end{proof}

\begin{prop}\label{prop:S1S2} Let $u\in C^3(M)$ be a critical point of $I_\e$ such that $f_\e(u)= \frac{1}{2}\F'_\e(u)\in C^1(M)$, $\e>0$. Then
\[
\d^2 I_\e (u, M)[X] = I_\e''(u, M)[L_X u].
\]
\end{prop}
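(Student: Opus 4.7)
The strategy is to start from the divergence-form decomposition of $V_2$ furnished by Lemma \ref{lem:V2}, integrate, and use the Euler--Lagrange equation for $u$ together with a single integration by parts to match the classical second variation. Concretely, since $X$ (and hence the vector field $Y$ in \eqref{ap:V2div2}) is compactly supported, Lemma \ref{lem:V2} applied in the Euclidean setting $M=\O$, $g=\d$ gives
\begin{equation*}
\d^2 I_\e(u,\O)[X] = \int_\O V_2\, dx = \int_\O\Bigl(2|\n(L_X u)|^2 - (\div X)\,\F_\e'(u)\,L_X u - 2\,\D u\,L_X^2 u\Bigr)dx,
\end{equation*}
because the divergence term contributes nothing.

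Next I would invoke the hypothesis that $u\in C^3(\O)$ is a critical point of $I_\e$, so that the Euler--Lagrange equation $\D u = f_\e(u) = \tfrac12 \F_\e'(u)$ holds classically, i.e.\ $2\D u = \F_\e'(u)$. Substituting this into the integrand and grouping the two $\F_\e'(u)$-terms yields
\begin{equation*}
\d^2 I_\e(u,\O)[X] = \int_\O \Bigl(2|\n(L_X u)|^2 - \F_\e'(u)\bigl((\div X)\,L_X u + L_X^2 u\bigr)\Bigr)dx.
\end{equation*}
The key identity is that, since $L_X u = \langle \n u, X\rangle$ is a scalar function and $L_X^2 u = X(L_X u) = \langle \n(L_X u), X\rangle$, one has
\begin{equation*}
(\div X)\,L_X u + L_X^2 u = \div\bigl((L_X u)\,X\bigr).
\end{equation*}
Integrating by parts against $\F_\e'(u)$ (which is $C^1$ by hypothesis, and the vector field $(L_X u)X$ is $C^2$ and compactly supported, so the boundary term vanishes) gives
\begin{equation*}
-\int_\O \F_\e'(u)\,\div\bigl((L_X u)\,X\bigr)\,dx = \int_\O \langle \n \F_\e'(u), X\rangle\,L_X u\, dx = \int_\O \F_\e''(u)\,(L_X u)^2\,dx,
\end{equation*}
using $\n \F_\e'(u) = \F_\e''(u)\n u$ and the defining identity $\langle \n u, X\rangle = L_X u$.

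Plugging this back in, we obtain
\begin{equation*}
\d^2 I_\e(u,\O)[X] = \int_\O \Bigl(2|\n(L_X u)|^2 + \F_\e''(u)(L_X u)^2\Bigr) dx,
\end{equation*}
which, on comparison with the classical second variation formula $I_\e''(u,\O)[\phi] = \int_\O (2|\n\phi|^2 + \F_\e''(u)\phi^2)\,dx$ (a direct differentiation of $I_\e(u+t\phi,\O)$, generalizing \eqref{eq:clastab}), is exactly $I_\e''(u,\O)[L_X u]$. There is no real obstacle here beyond careful bookkeeping; the only subtle point is ensuring that the regularity hypotheses ($u\in C^3$, $\F_\e'(u)\in C^1$) are just enough to legitimize both the application of Lemma \ref{lem:V2} (which needs $u\in C^3$ so that $L_X^2 u$ is defined pointwise) and the single integration by parts invoked above.
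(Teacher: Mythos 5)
Your proof is correct and follows essentially the same route as the paper's: integrate the divergence form of $V_2$ from Lemma~\ref{lem:V2}, invoke $2\D u = \F_\e'(u)$, recognize $L_X^2 u + (\div X)L_X u = \div\bigl((L_X u)X\bigr)$, and integrate by parts once more. One small remark: the proposition is stated on a general Riemannian manifold $M$, and you specialize to the Euclidean case $M=\O\subseteq\R^n$, $g=\d$; this is unnecessary, since every step you take (including $\div_g(\phi X) = L_X\phi + \phi\,\div_g X$ and the final integration by parts against $\F_\e'(u)$) goes through verbatim on $(M,g)$.
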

\begin{proof}
Since $u\in C^3(M)$ is a critical point of $I_\e$, we have $2\D u = \F'_\e(u)$.  After integration, the divergence terms in \eqref{ap:V2div1} vanish and we are left with 
\begin{align*}
\d^2 I_\e(u)[X] &= \int_M 2 |d(L_X u)|^2 - 2 \D u (L_X^2 u) - (\div X) \F_\e'(u) L_X u \\
& = \int_M 2 |d(L_X u)|^2 - \F_\e'(u) \left(L_X (L_X u ) + \div X L_X u \right) = \int_M 2 |d(L_X u)|^2 -\F_\e'(u) \div([L_X u] X) \\
& = 2 \int_M |d(L_X u)|^2 - \div\left(f_\e(u) (L_X) u X\right) + f_\e'(u)(L_X u)^2 = I_\e''(u)[L_X u],
\end{align*}
after another application of the Divergence theorem. 
\end{proof}

\section{First and second inner variations for regular free boundaries}\label{sec:fbvars}

We will apply the formulas in Lemma \ref{lem:V1} and \ref{lem:V2} to simplify the expressions for the first and second inner variations of the Alt-Caffarelli energy $I_0$ in the Euclidean setting.  

\begin{definition} Let $W\subset \R^n$ be a open set. We say that a point $p\in \de W$ is $C^1$-\emph{regular} if there exists $r>0$ and a $C^1$ function $g:\R^{n-1}\to \R$ such that in a suitable Euclidean coordinate system
\[
W\cap B_r(p) = \{x=(x', x_n)\in B_r(p): x_n> g(x')\}.
\]
Otherwise, we call p \emph{singular}. We will denote by $\text{Reg}(\de W)$ the (relatively open) subset of $C^1$-regular points of $\de W$. 
\end{definition}

\begin{prop}\label{prop:1varsmooth}
Let $\O\subseteq \R^n$ be a Euclidean domain and assume that $u\in H^1_\text{loc}(\O)\cap C(\O)$ is a nonnegative inner-stationary solution of \eqref{FBP} in $\O$ that satisfies
\begin{itemize}
\item $u$ is harmonic in $\O^+_0(u)=\{x\in \O: u(x)>0\}$;
\item $u$ is $C^1$ up to $\text{Reg}(F_0(u))$.
\end{itemize}
Then $|\n u|(p) = 1$ at every $C^1$-regular point $p\in F_0(u)$. 
\end{prop}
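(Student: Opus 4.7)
The plan is to compute $\delta I_0(u,\Omega)[X]$ explicitly for test vector fields $X$ supported in a small neighborhood of a chosen regular free boundary point $p\in\mathrm{Reg}(F_0(u))$, and exploit the inner-stationarity of $u$ to extract the free boundary condition $|\nabla u|=1$.

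First I would localize: pick a ball $B=B_r(p)\subset\Omega$ small enough that $F_0(u)\cap B$ is a $C^1$ graph and thus splits $B$ into the open set $B^+:=\Omega_0^+(u)\cap B$ and the zero-phase piece $B^0:=\{u=0\}\cap B$, separated by a $C^1$ hypersurface. Take any $X\in C^\infty_c(B;\mathbb{R}^n)$. Since $\mathcal F_0(u)\equiv 0$ on $B^0$ and $du=0$ a.e.\ on $\{u=0\}$ (because $u\in H^1_{\mathrm{loc}}(\Omega)$ vanishes there), the contribution to $\delta I_0(u,\Omega)[X]$ from $B^0$ vanishes, so the first inner variation formula \eqref{eq:firstvar} reduces to
\[
\delta I_0(u,\Omega)[X] \;=\; \int_{B^+} V_1(u;X)\,dx.
\]

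Next I would invoke Lemma \ref{lem:V1} inside $W=B^+$, where $u$ is harmonic (hence smooth) and $\mathcal F_0(u)\equiv 1$ is $C^1$, so $\Delta u=0$ and $\mathcal F_0'(u)=0$ pointwise in $W$. This collapses \eqref{ap:V1div} to the pure divergence
\[
V_1(u;X)\;=\;\mathrm{div}\!\left(eX - 2(L_X u)\,\nabla u\right)\quad\text{in }B^+,\qquad e=|\nabla u|^2+1.
\]
Applying the divergence theorem on $B^+$ (valid since $\partial B^+\cap\mathrm{supp}\,X$ is a $C^1$ hypersurface $F_0(u)\cap B$ and $u\in C^1$ up to it by hypothesis), with $\nu$ the outer unit normal to $B^+$ along $F_0(u)$, gives
\[
\delta I_0(u,\Omega)[X]\;=\;\int_{F_0(u)\cap B}\bigl\langle eX - 2(L_X u)\nabla u,\,\nu\bigr\rangle\,d\mathcal H^{n-1}.
\]

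Now I would use the geometric relations at a regular boundary point. Because $u=0$ on $F_0(u)$ and $u>0$ just inside $B^+$, the one-sided gradient from $B^+$ satisfies $\nabla u=-|\nabla u|\,\nu$ on $F_0(u)\cap B$, and consequently $L_X u=\langle X,\nabla u\rangle=-|\nabla u|\langle X,\nu\rangle$. Substituting,
\[
\bigl\langle eX - 2(L_X u)\nabla u,\,\nu\bigr\rangle \;=\; (|\nabla u|^2+1)\langle X,\nu\rangle - 2|\nabla u|^2\langle X,\nu\rangle \;=\; \bigl(1-|\nabla u|^2\bigr)\langle X,\nu\rangle.
\]
Thus inner-stationarity yields
\[
\int_{F_0(u)\cap B}\bigl(1-|\nabla u|^2\bigr)\langle X,\nu\rangle\,d\mathcal H^{n-1}\;=\;0\qquad\forall X\in C^\infty_c(B;\mathbb R^n).
\]
Since $\langle X,\nu\rangle$ ranges over a dense subset of $C^0_c(F_0(u)\cap B)$ as $X$ varies (take $X=\varphi\,\tilde\nu$ for $\varphi\in C^\infty_c(B)$ and $\tilde\nu$ a smooth extension of $\nu$), the fundamental lemma of the calculus of variations forces $|\nabla u|\equiv 1$ on $F_0(u)\cap B$, in particular at $p$.

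The main technical point is the one-sided divergence theorem on $B^+$: one needs $u$ to be $C^1$ up to $F_0(u)\cap B$ to make sense of the trace of $eX-2(L_X u)\nabla u$ on $F_0(u)$, but this is exactly the standing hypothesis. Everything else is a direct computation enabled by the clean structural identity of Lemma \ref{lem:V1}, which is the real workhorse.
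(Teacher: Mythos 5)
Your proof is correct and follows essentially the same route as the paper's: both localize near a regular free boundary point, apply the divergence identity of Lemma \ref{lem:V1} with $\Delta u=0$ and $\mathcal F_0'(u)=0$ in the positive phase, pass to a boundary integral on $F_0(u)\cap B$ by the divergence theorem, simplify using $\nabla u = -|\nabla u|\,\nu$, and conclude from the arbitrariness of $X$. Your write-up is slightly more explicit about why the zero-phase contribution vanishes and how the fundamental lemma is invoked, but the argument is the same.
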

\begin{proof}
Pick a regular point $p\in F_0(u)$ and let $B$ be a small enough ball centered at $p$ such that $W:=\O^+_0(u)\cap B$ is the supergraph of a $C^1$ function. Let $X\in C^\infty_c(B; \R^n)$. Since $u\in C^\infty(W)\cap C^1\left(W\cup \text{Reg}(F_0(u))\right)$ and $\F_0(u)=1$ in $W$,  \eqref{ap:V1div} tells us that
\begin{equation*}
V_1(u, X) = \div ((|\n u|^2 + 1) X - 2 X\cdot u \n u) \quad \text{in }W,
\end{equation*}
as  $\D u = 0$ in $W$. Now, since $\n u = 0$ a.e.\ in $\{u=0\}$, we see that
\[
0=\d I_0(u)[X] =  \int_B V_1(u,X) \, dx = \int_D V_1(u, X) \, dx = \int_{F_0(u) \cap B} \left\langle (|\n u|^2 +1) X - 2 L_X u \n u, \nu \right\rangle  d\H^{n-1}
\]
where the last equality is a consequence of the Divergence Theorem and $\nu$ denotes the outer unit normal to $\de D$. As $\langle 2 L_X u \n u, \nu \rangle = 2|\n u|^2 \langle X, \nu \rangle$, we deduce
\[
0 = \int_{F_0(u) \cap B} (-|\n u|^2 + 1) \langle X, \nu \rangle\, d\H^{n-1}.
\]
Since $X\in C^\infty_c(B;\R^n)$ can be taken arbitrary, we conclude that $|\n u(p)| = 1$.
\end{proof}


\begin{prop}\label{prop:2varsmooth} Let $\O\subseteq \R^n$ be a Euclidean domain and suppose that $u\in H^1_\text{loc}(\O)\cap C(\O)$ satisfies
\begin{itemize}
\item $u$ is an inner-stationary solution of \eqref{FBP}: $\d I_0(u,\O)=0$;
\item $u$ is harmonic in $\O^+_0(u)=\{x\in \O: u(x)>0\}$;
\item $u$ is $C^2$ up to the $\text{Reg}(F_0(u))$.
\end{itemize}
Then for every vector field $X\in C^\infty_c(\O, \R^n)$ supported away from the singular part of $F_0(u)$, the second inner variation of $I_0$ at $u$, along $X$, equals
\begin{equation}\label{prop:2varsmooth:eq}
\frac{1}{2}\d^2 I_0(u,\O)[X] = \int_{\O^+_0(u)} |\n (L_X u)|^2\, dx - \int_{\text{Reg}(F_0(u))} H (L_Xu)^2 \, d\H^{n-1}, 
\end{equation}
where $H$ denotes the mean curvature of the regular free boundary $\text{Reg}(F_0(u))$ with respect to the outer unit normal $\nu=-\n u$.  
\end{prop}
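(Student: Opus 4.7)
The plan is to exploit the divergence structure of Lemma \ref{lem:V2} on the positive phase $\O_0^+(u)$, producing both a bulk term equal to $\int|\nabla(L_X u)|^2$ and a boundary term on $\text{Reg}(F_0(u))$, and then to show that the latter reduces to $-\int H (L_X u)^2\, d\H^{n-1}$.

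First, I would observe that $\nabla u = 0$ a.e.\ on $Z_0(u) = \{u = 0\}$ and $\F_0(u) = 0$ there, so every term in $V_2(u; X)$ from Proposition \ref{prop:vars} vanishes a.e.\ on the zero phase. Hence
\begin{equation*}
\d^2 I_0(u, \O)[X] = \int_{\O_0^+(u)} V_2(u; X)\, dx.
\end{equation*}
In $\O_0^+(u)$, $u$ is smooth and harmonic with $\F_0(u) \equiv 1$ (so $\F_0'(u) \equiv 0$), and Lemma \ref{lem:V2} yields $V_2 = \text{div}\, Y + 2|\nabla(L_X u)|^2$ pointwise, with $Y$ as in that lemma. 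Since $\text{supp}(X)$ is compact and avoids $\text{Sing}(F_0(u))$, I can restrict to a neighborhood $W$ in which $F_0(u) \cap W \subset \text{Reg}(F_0(u))$ is a $C^2$ hypersurface and $u$ is $C^2$ up to it; the divergence theorem then gives
\begin{equation*}
\tfrac{1}{2}\d^2 I_0(u, \O)[X] = \int_{\O_0^+(u)} |\nabla(L_X u)|^2\, dx + \tfrac{1}{2}\int_{\text{Reg}(F_0(u))} \langle Y, \nu\rangle\, d\H^{n-1}.
\end{equation*}

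Second, I would simplify $\langle Y, \nu\rangle$ on $\text{Reg}(F_0(u))$ using the following identities valid there: by Proposition \ref{prop:1varsmooth}, $|\nabla u| = 1$, so $\nu = -\nabla u$, $\langle X, \nu\rangle = -L_X u$, $\langle \nabla u, \nu\rangle = -1$, and the energy density $e := |\nabla u|^2 + \F_0(u) = 2$. Since $|\nabla u|^2$ is constant tangentially along $F_0(u)$, $\nabla|\nabla u|^2 = (\partial_\nu|\nabla u|^2)\nu$, and a direct computation of $H = \text{div}(\nabla u/|\nabla u|)\big|_{F_0}$ (using $\Delta u = 0$ and $|\nabla u|=1$) gives $H = \tfrac{1}{2}\partial_\nu|\nabla u|^2$. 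Writing $\varphi := L_X u$ yields $L_X|\nabla u|^2 = -2H\varphi$; and decomposing $X = X^T + (X\cdot\nu)\nu$ on the free boundary so that $L_X^2 u = \langle X, \nabla\varphi\rangle = \varphi\langle \nabla u, \nabla\varphi\rangle + \langle X^T, \nabla^{\G}\varphi\rangle$ (with $\G := \text{Reg}(F_0(u))$ and $\nabla^\G$ the tangential gradient), substitution into $Y$ gives
\begin{equation*}
\langle Y, \nu\rangle = -2\varphi\,\text{div}\, X + 2H\varphi^2 - 2\varphi\partial_\nu\varphi - 2\langle X^T, \nabla^{\G}\varphi\rangle.
\end{equation*}

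Finally, I would integrate over $\G$ and show that all terms except $2H\varphi^2$ cancel after integration. Using the surface calculus decomposition $\text{div}\, X = \text{div}_\G X^T + H\varphi + \nabla_\nu X \cdot \nu$ (which uses $X\cdot\nu = -\varphi$ and $\text{div}_\G\nu = -H$) together with the tangential integration-by-parts identity $\int_\G \varphi\,\text{div}_\G X^T = -\int_\G \langle \nabla^\G\varphi, X^T\rangle$ (valid since $X^T$ has compact support in the smooth part), the tangential and $\text{div}_\G X^T$ terms combine and leave
\begin{equation*}
\int_{\G} \langle Y, \nu\rangle\, d\H^{n-1} = -2\int_{\G} \varphi\bigl(\partial_\nu\varphi + \nabla_\nu X \cdot \nu\bigr)\, d\H^{n-1}.
\end{equation*}
The key identity is then $\partial_\nu \varphi + \nabla_\nu X \cdot \nu = H\varphi$ on $\G$, which I would prove by differentiating $\varphi = \langle X, \nabla u\rangle$ in the $\nu$ direction and invoking the Hadamard-type relation $D^2 u \cdot \nu = -H\nu$ on $F_0(u)$---itself a consequence of $D^2 u \cdot \nabla u = \tfrac{1}{2}\nabla|\nabla u|^2 = H\nu$ together with $\nabla u = -\nu$ and $|\nabla u|=1$. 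Plugging this in gives $\int_\G\langle Y, \nu\rangle = -2\int_\G H\varphi^2\, d\H^{n-1}$, which inserted into the formula from Step 1 yields \eqref{prop:2varsmooth:eq}. The main obstacle is the bookkeeping of the surface-geometric identities---in particular pinning down the correct sign convention for $H$ (under which a sphere of radius $R$ with outer normal has $H = (n-1)/R > 0$) and deriving $D^2 u\cdot \nu = -H\nu$---after which the cancellations are essentially mechanical.
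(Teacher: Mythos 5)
Your proposal is correct and follows the paper's overall strategy: reduce $\delta^2 I_0$ to an integral over $\O^+_0(u)$, invoke Lemma~\ref{lem:V2} to write $V_2 = \div Y + 2|\nabla(L_X u)|^2$, apply the divergence theorem, and then simplify the boundary term $\langle Y,\nu\rangle$ on $\mathrm{Reg}(F_0(u))$ down to $-2H(L_Xu)^2$ modulo surface divergences. Where you differ from the paper is in the last step. The paper verifies the pointwise identity $\tfrac12\langle Y_0,\nabla u\rangle = H(L_Xu)^2 + \div_\Sigma((L_Xu)X^T)$ by a direct computation in coordinates adapted to a point $p\in F_0(u)$ with $\nabla u(p)=e_n$. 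You instead simplify $\langle Y,\nu\rangle$ in coordinate-free surface-calculus notation, defer part of the cancellation to after integration via the tangential divergence theorem, and close the argument with the Hadamard-type relation $D^2u\cdot\nu = -H\nu$ on the regular free boundary; the latter (equivalently $u_{ni}=0$ for tangential $i$ and $u_{nn}=-H$) is precisely what the paper's coordinate computation exploits. So the mathematical content is the same; your route makes the geometric mechanism — $|\nabla u|\equiv 1$ forcing the Hessian to preserve the normal — explicit, while the paper's version is a self-contained index computation.

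One small caveat: your parenthetical sanity check on the sign of $H$ is off. With the paper's convention $H=\div(\nabla u/|\nabla u|) = \tfrac12\partial_\nu|\nabla u|^2$ (and $\nu = -\nabla u$), a ball-shaped positive phase $\O^+_0(u)=B_R$ with outward normal has $H = -(n-1)/R<0$, not $+(n-1)/R$; the positive value occurs when $\O^+_0(u)$ is the exterior of the ball. This does not affect your derivation, since you consistently use the correct formula $H=\tfrac12\partial_\nu|\nabla u|^2$ throughout, but the heuristic as stated would lead one astray.
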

\begin{proof}
Since $\n u = 0$ a.e.\ in $\{u=0\}$, the integration in the formula for $\d^2 I_0(u,\O)[X]$ can be taken only over the positive phase $W:=\O^+(u)$. In $W$ $u$ is smooth and $\F_0'(u) = 0$, so that we have the validity of formulas \eqref{ap:V2div1}-\eqref{ap:V2div2}, indicating
\[
V_2(u;X) = \div Y + 2 |\n (L_X u)|^2 \quad \text{in } W,
\]
on account of the fact that $\D u = 0$ in $D$, where $Y$ is given by \eqref{ap:V2div2}. Denote $\Sigma:=\text{Reg}(F_0(u))$. Since $V_2$ is supported away from the singular part of $F_0(u)$, we may apply the Divergence Theorem to obtain
\begin{align}
\d^2 I_0(u,\O)[X] &= \int_D V_2(u;X) \, dx = 2 \int_D |\n (L_X u)|^2 \, dx + \int_{\Sigma} \langle Y_0, \nu \rangle\, d\H^{n-1} \notag \\
& = 2 \int_D |\n (L_X u)|^2 \, dx - \int_{\Sigma} \langle Y_0, \n u \rangle\, d\H^{n-1}, \label{prop:2varsmooth:eq1}
\end{align}
where $Y_0$ is the continuous vector field on $\Sigma$, defined by
\[
Y_0(p) = \lim_{x\to p, x\in D} Y(x),
\]
with $Y(x)$ given by \eqref{ap:V2div2}. Note that in \eqref{prop:2varsmooth:eq1} we have used Proposition \ref{prop:1varsmooth} that the outer unit normal to $\de \Sigma$, $\nu = -\n u$.

We claim that 
\begin{equation}\label{prop:2varsmooth:main}
\frac{1}{2}\langle Y_0, \n u \rangle =  H (L_X u)^2 + \div_\Sigma \left((L_X u )X^T\right) \quad \text{on } \Sigma,
\end{equation}
where $X^T$ denotes the component of $X$ tangential to $\Sigma$, and $\div_\Sigma \, Z$ denotes the surface divergence of a vector field $Z$ on $\Sigma$:
\[
\div_\Sigma Z(x) = \sum_{i=1}^{n-1} \langle D_{e_i} Z(x), e_i \rangle, \quad \text{for an orthonormal basis }\{e_i\}_{i=1}^{n-1} \text{ of } T_x \Sigma.
\]
Once we establish \eqref{prop:2varsmooth:main}, the formula \eqref{prop:2varsmooth:eq} will be a consequence of \eqref{prop:2varsmooth:eq1} and the Divergence Theorem, applied in $\Sigma$. 

Pick any point $p\in \Sigma$. It will be convenient to work in a Euclidean coordinate system  $(x_1, \ldots x_n)$ centered at $p$, such that the unit vector along $x_n$, $e_n=\nabla u(p)$. With this choice, $u_i(p)=0$ for $i\in S:=\{1,2, \ldots, n-1\}$, $u_n(p)=1$ and 
\begin{align*}
|\n u|_i(p) =\frac{\de_{x_i}|\n u|^2}{2|\n u|} = u_j u_{ji} = u_{ni}(p). 
\end{align*}
Since $|\n u|=1$ on $\Sigma$, we have $u_{ni}(p)=0$ for $i\in S$. Furthermore, because of harmonicity and the fact that $|\n u|(p)=1$, the mean curvature of $\Sigma$ with respect to the outer unit normal $\nu = -\n u$,
\begin{equation}\label{prop:2varsmooth:H}
H=  \div\frac{\n u}{|\n u|} = -|\n u|_n = - u_{nn} \quad \text{at } p.
\end{equation}
With all this in mind, let us calculate the left-hand side of \eqref{prop:2varsmooth:main}, using the coordinates above. Since 
\[
e(x):= |\n u|^2 + F_0(u) =|\n u|^2 + 1 \to 2 \quad \text{when } x\to \Sigma,
\]
we have at $p$,
\begin{align}
\frac{1}{2} \langle Y_0, \n u\rangle &=  \div X X^n + \frac{1}{2} X^i \de_{x_i} |\n u|^2 X^n - 2 X^n \de_{x_n} (L_X u) + L_X (L_X u) \notag \\
& =  \div X X^n +  X^i u_{ij} u_j X^n -  2 X^n (L_X u)_n +  \sum_{i\in S} X^i (L_X u)_i  +  X^n (L_X u)_n \notag \\
& =  \div X X^n +  u_{nn} (X^n)^2 +  \sum_{i\in S} X^i (L_X u)_i  -   X^n (X^i u_i)_n \notag\\
& =  \div X X^n +  u_{nn} (X^n)^2 +  L_{X^T} (L_X u) -   X^n \de_i X^i u_i - X^n X^i u_{in}\notag \\
& = \div X X^n  + u_{nn} (X^n)^2 +  L_{X^T} (L_X u) - X^n \de_n X^n - (X^n)^2 u_{nn} \notag \\
& =  X^n \sum_{i\in S} \de_{x_i} X^i + L_{X^T} (L_X u) = (L_X u) \div_\Sigma X +L_{X^T} (L_X u). \label{prop:2varsmooth:LHS}
\end{align}
On the other hand, as $X^T = X - \langle X ,\n u\rangle \n u$ on $\Sigma$, the right-hand side of \eqref{prop:2varsmooth:main} equals
\begin{align}
H (L_X u)^2  &+ \div_\Sigma \left((L_X u )X^T\right) = H (L_X u)^2 + L_{X^T} (L_X u) + (L_X u) \div_{\Sigma} X^T\notag \\
& =  H (L_X u)^2 + L_{X^T} (L_X u) + (L_X u) \div_\Sigma (X) - (L_X u) \div_\Sigma (\langle X, \n u\rangle \n u ) \notag \\
& =  L_{X^T} (L_X u) + (L_X u) \div_\Sigma (X) + H (L_X u)^2  - (L_X u) \sum_{i\in S} \langle D_i [(L_X u) \n u], e_i \rangle \notag \\
& = L_{X^T} (L_X u) + (L_X u) \div_\Sigma (X)  + (L_X u)^2 \left(H - \sum_{i\in S} u_{ii} \right) \notag \\
& = L_{X^T} (L_X u) + (L_X u) \div_\Sigma (X) + (L_X u)^2 (-\D u) =  L_{X^T} (L_X u) + (L_X u) \div_\Sigma (X), \label{prop:2varsmooth:RHS}
\end{align}
where we used the harmonicity of $u$ and the formula \eqref{prop:2varsmooth:H} for the mean curvature of $\Sigma$ to obtain the last line. Now, \eqref{prop:2varsmooth:LHS} and \eqref{prop:2varsmooth:RHS} give \eqref{prop:2varsmooth:main}, thereby completing the proof of the proposition.

\end{proof}

\bibliography{SL_bib.bib}

\newcommand{\etalchar}[1]{$^{#1}$}
\begin{thebibliography}{BMM{\etalchar{+}}22}

\bibitem[AC81]{AC}
H.~W. Alt and L.~A. Caffarelli.
\newblock Existence and regularity for a minimum problem with free boundary.
\newblock {\em Journal f\"{u}r die Reine und Angewandte Mathematik},
  325:105--144, 1981.

\bibitem[AS22]{AS}
A.~Audrito and J.~Serra.
\newblock Interface regularity for semilinear one-phase problems.
\newblock {\em Adv. Math.}, 403:Paper No. 108380, 51, 2022.

\bibitem[BCN90]{BereCafNiren1990}
H.~Berestycki, L.~A. Caffarelli, and L.~Nirenberg.
\newblock Uniform estimates for regularization of free boundary problems.
\newblock In {\em Analysis and partial differential equations}, volume 122 of
  {\em Lecture Notes in Pure and Appl. Math.}, pages 567--619. Dekker, New
  York, 1990.

\bibitem[BL08]{laminarflames}
J.~D. Buckmaster and G.~S.~S. Ludford.
\newblock {\em Theory of laminar flames.}
\newblock Camb. Monogr. Mech. Cambridge: Cambridge University Press, reprint of
  the 1982 hardback ed. edition, 2008.

\bibitem[BMM{\etalchar{+}}22]{Velichkovetal2022}
G.~Buttazzo, F.~P. Maiale, D.~Mazzoleni, G.~Tortone, and B.~Velichkov.
\newblock Regularity of the optimal sets for a class of integral shape
  functionals.
\newblock {\em arXiv preprint arXiv:2212.09118}, 2022.

\bibitem[Caf87]{CafI}
L.~A. Caffarelli.
\newblock A {H}arnack inequality approach to the regularity of free boundaries.
  {I}. {L}ipschitz free boundaries are {$C^{1,\alpha}$}.
\newblock {\em Rev. Mat. Iberoamericana}, 3(2):139--162, 1987.

\bibitem[Caf89]{CafII}
L.~A. Caffarelli.
\newblock A {H}arnack inequality approach to the regularity of free boundaries.
  {II}. {F}lat free boundaries are {L}ipschitz.
\newblock {\em Comm. Pure Appl. Math.}, 42(1):55--78, 1989.

\bibitem[CC06]{CafCordo2}
L.~A. Caffarelli and A.~C{\'o}rdoba.
\newblock Phase transitions: uniform regularity of the intermediate layers.
\newblock {\em J. Reine Angew. Math.}, 593:209--235, 2006.

\bibitem[CJK04]{CJK}
L.~A. Caffarelli, D.~S. Jerison, and C.E. Kenig.
\newblock Global energy minimizers for free boundary problems and full
  regularity in three dimensions.
\newblock In {\em Noncompact problems at the intersection of geometry,
  analysis, and topology}, volume 350 of {\em Contemp. Math.}, pages 83--97.
  Amer. Math. Soc., Providence, RI, 2004.

\bibitem[CLW97]{CafLW97-1}
L.~A. Caffarelli, C.~Lederman, and N.~Wolanski.
\newblock Uniform estimates and limits for a two phase parabolic singular
  perturbation problem.
\newblock {\em Indiana Univ. Math. J.}, 46(2):453--489, 1997.

\bibitem[CS05]{CafSalsa}
L.~A. Caffarelli and S.~Salsa.
\newblock {\em A geometric approach to free boundary problems}, volume~68 of
  {\em Graduate Studies in Mathematics}.
\newblock American Mathematical Society, Providence, RI, 2005.

\bibitem[CV95]{CafVasquez1995}
L.~A. Caffarelli and J.~L. V{\'a}zquez.
\newblock A free-boundary problem for the heat equation arising in flame
  propagation.
\newblock {\em Trans. Am. Math. Soc.}, 347(2):411--441, 1995.

\bibitem[DG79]{DeGio}
E.~De~Giorgi.
\newblock Convergence problems for functionals and operators.
\newblock In {\em Proceedings of the {I}nternational {M}eeting on {R}ecent
  {M}ethods in {N}onlinear {A}nalysis ({R}ome, 1978)}, pages 131--188, Bologna,
  1979. Pitagora.

\bibitem[DS11]{DS2011}
D.~De~Silva.
\newblock Free boundary regularity for a problem with right hand side.
\newblock {\em Interfaces Free Bound.}, 13(2):223--238, 2011.

\bibitem[DSJ09]{DSJcone}
D.~De~Silva and D.~S. Jerison.
\newblock A singular energy minimizing free boundary.
\newblock {\em J. Reine Angew. Math.}, 635:1--21, 2009.

\bibitem[EFRY22]{engelstein2022graphical}
M.~Engelstein, X.~Fern{\'a}ndez-Real, and H.~Yu.
\newblock Graphical solutions to one-phase free boundary problems.
\newblock {\em arXiv preprint arXiv:2212.08847}, 2022.

\bibitem[Eva10]{evans2010partial}
L.~C. Evans.
\newblock {\em Partial Differential Equations}.
\newblock Graduate studies in mathematics. American Mathematical Society, 2010.

\bibitem[FRRO19]{FR-RO2019}
X.~Fern\'{a}ndez-Real and X.~Ros-Oton.
\newblock On global solutions to semilinear elliptic equations related to the
  one-phase free boundary problem.
\newblock {\em Discrete Contin. Dyn. Syst.}, 39(12):6945--6959, 2019.

\bibitem[FV09]{FarinaValdi2009}
A.~Farina and E.~Valdinoci.
\newblock The state of the art for a conjecture of de {Giorgi} and related
  problems.
\newblock In {\em Recent progress on reaction-diffusion systems and viscosity
  solutions. Based on the international conference on reaction-diffusion
  systems and viscosity solutions, Taichung, Taiwan, January 3--6, 2007}, pages
  74--96. Hackensack, NJ: World Scientific, 2009.

\bibitem[JS15]{JS}
D.~S. Jerison and O.~Savin.
\newblock Some remarks on stability of cones for the one-phase free boundary
  problem.
\newblock {\em Geometric and Functional Analysis}, 25(4):1240--1257, 2015.

\bibitem[Kar20]{Karakhanyan2020}
A.~L. Karakhanyan.
\newblock Capillary surfaces arising in singular perturbation problems.
\newblock {\em Anal. PDE}, 13(1):171--200, 2020.

\bibitem[KW23]{KW-2023}
N.~Kamburov and K.~Wang.
\newblock Nondegeneracy for stable solutions to the one-phase free boundary
  problem.
\newblock {\em to appear in Mathematische Annalen}, 2023.

\bibitem[Le11]{le2011second}
N.~Q Le.
\newblock On the second inner variation of the {A}llen-{C}ahn functional and
  its applications.
\newblock {\em Indiana University Mathematics Journal}, pages 1843--1856, 2011.

\bibitem[Lee13]{LeeSM}
J.~M. Lee.
\newblock {\em Introduction to smooth manifolds}, volume 218 of {\em Grad.
  Texts Math.}
\newblock New York, NY: Springer, 2nd revised ed edition, 2013.

\bibitem[LW98]{LedWolanski1998}
C.~Lederman and N.~Wolanski.
\newblock Viscosity solutions and regularity of the free boundary for the limit
  of an elliptic two phase singular perturbation problem.
\newblock {\em Ann. Scuola Norm. Sup. Pisa Cl. Sci. (4)}, 27(2):253--288
  (1999), 1998.

\bibitem[LW06a]{LedWolanski2006CPDE}
C.~Lederman and N.~Wolanski.
\newblock Singular perturbation in a nonlocal diffusion problem.
\newblock {\em Commun. Partial Differ. Equations}, 31(1-3):195--241, 2006.

\bibitem[LW06b]{LedWolanski2006}
C.~Lederman and N.~Wolanski.
\newblock A two phase elliptic singular perturbation problem with a forcing
  term.
\newblock {\em J. Math. Pures Appl. (9)}, 86(6):552--589, 2006.

\bibitem[Sav09]{savin2009regularity}
O.~Savin.
\newblock Regularity of flat level sets in phase transitions.
\newblock {\em Annals of Mathematics}, pages 41--78, 2009.

\bibitem[Vel19]{velichkovfbnotes}
B.~Velichkov.
\newblock Regularity of the one-phase free boundaries.
\newblock {\em Lecture notes available at http://cvgmt.sns.it/paper/4367},
  2019.

\bibitem[Wei98]{weiss1998partial}
G.~S. Weiss.
\newblock Partial regularity for weak solutions of an elliptic free boundary
  problem.
\newblock {\em Communications in partial differential equations},
  23(3-4):439--455, 1998.

\bibitem[Wei03]{Weiss2003singperturbation}
G.~S. Weiss.
\newblock A singular limit arising in combustion theory: {Fine} properties of
  the free boundary.
\newblock {\em Calc. Var. Partial Differ. Equ.}, 17(3):311--340, 2003.

\end{thebibliography}
\end{document}